\numberwithin{equation}{section}
\theoremstyle{plain}
\newtheorem{theorem}[equation]{Theorem} 
\newtheorem{lemma}[equation]{Lemma}    
\newtheorem{proposition}[equation]{Proposition}   
\newtheorem{corollary}[equation]{Corollary}
\newtheorem{question}[equation]{Question}
\newcolumntype{M}[1]{>{\raggedright}m{#1}}
\newcounter{thm}
\newtheorem{main_theorem}[thm]{Theorem}
\theoremstyle{remark}
\newtheorem*{remark*}{Remarks} 
\newtheorem*{example*}{Example}
\theoremstyle{definition}
\newtheorem{definition}[equation]{Definition}
\newtheorem{remark}[equation]{Remark}
\newtheorem{example}[equation]{Example}
\newcommand{\im}{\operatorname{Im}}
\newcommand{\Ker}{\operatorname{Ker}}
\newcommand{\restr}{\mbox{\Large \(|\)\normalsize}}
\begin{document}

\title[]
{Non-vanishing for group $L^p$-cohomology of solvable and semisimple Lie groups}
 
\author{Marc Bourdon and Bertrand R\'emy} 
\maketitle

\begin{abstract}
We obtain non-vanishing of group $L^p$-cohomology of Lie groups for $p$ large and when the degree is equal to the rank of the group. 
This applies both to semisimple and to some suitable solvable groups. 
In particular, it confirms that Gromov's question on vanishing below the rank is formulated optimally. 
To achieve this, some complementary vanishings are combined with the use of spectral sequences.
To deduce the semisimple case from the solvable one, we also need comparison results between various theories for $L^p$-cohomology, allowing the use of quasi-isometry invariance.

\vspace*{2mm} \noindent{2010 Mathematics Subject Classification: } 20J05, 20J06, 22E15, 22E41, 53C35, 55B35, 57T10, 57T15.
%
%
%
%

\vspace*{2mm}

\noindent{Keywords and phrases: } $L^p$-cohomology, Lie~group, symmetric space, quasi-isometric invariance, spectral sequence, cohomology (non-)vanishing, root system.
\end{abstract}

\setlength{\parskip}{\smallskipamount}

\tableofcontents

\setlength{\parskip}{\medskipamount}

\newpage

\section*{Introduction}
\label{1}

This paper deals with several variants of group $L^p$-cohomology, where $p$ is 
a real number in $(1, +\infty)$. 

The first one, the \emph{continuous $L^p$-cohomology}~of locally compact second countable groups $G$, is the continuous cohomology \cite[Chap.\! IX]{BW} with coefficients in the right-regular representation on $L^p(G)$; we denote it by ${\rm H}^*_{\mathrm {ct}} ( G, L^p(G))$.
We also consider the associated \emph{reduced continuous $L^p$-cohomology}, denoted by $\overline{{\rm H}^*_{\mathrm {ct}}} ( G, L^p(G))$ 
(it is the largest Hausdorff quotient of the previous one).
It is known that both topolical vector spaces ${\rm H}^*_{\mathrm {ct}} ( G, L^p(G))$ and $\overline{{\rm H}^*_{\mathrm {ct}}} ( G, L^p(G))$ are invariant for the equivalence relation given by quasi-isometries between groups $G$ as before when equipped with a left-invariant proper metric (see \cite[Theorem 1.1]{BR} and \cite{SaSc}).

One variant of $L^p$-cohomology makes sense for $C^\infty$ manifolds: it is the \emph{de Rham $L^p$-cohomology}, denoted by $L^p {\rm H}^*_{\rm dR}(M)$ for a $C^\infty$ manifold $M$. 
It was studied thoroughly by P.~Pansu and, as the name suggests, it is defined by imposing $L^p$-integrability conditions on differential forms on the manifold (and on their differentials). 
There are also intermediate variants, for instance the \emph{asymptotic $L^p$-cohomology} of suitable metric spaces, which elaborates on simplicial cohomology by adding $L^p$-integrability conditions; it is denoted by $L^p {\rm H}^*_{\rm AS}(X)$ for a suitable measured metric space $(X,d,\mu)$. 
Reduced quotients are also considered in these contexts. 

When it makes sense, comparison results between these cohomologies are often available (see for instance \cite{BR}, \cite{SaSc} and Appendix \ref{app - asymptotic and de Rham} in this paper; again, this part owes a lot to P.~Pansu's work). 

Beyond these comparisons, our main goal remains to exhibit some sufficient conditions for vanishing and non-vanishing of $L^p$-cohomologies for topological groups, 
taking into account the degree of the cohomology space and the exponent $p$. 

The topological groups we are considering in the present paper for these (non-)vanishing questions are connected Lie groups. 
This enables us to use many differential geometric and combinatorial tools.
The most popular Lie groups are the semisimple ones, and this is the family about which we present the first results in this introduction, but we will see that we are quickly led to considering solvable non-unimodular groups. 
This is explained by the fact that some contraction arguments are crucial for our purposes, and this is made possible by the aformentioned quasi-isometric invariance of group $L^p$-cohomology combined with Iwasawa decompositions.

\subsection{Semisimple groups and quasi-isometry invariance} 
\label{ss - ss and QI}
Let us start with semisimple real Lie groups. 
Our work is motivated by the following question, asked by M.~Gromov \cite[p.\! 253]{G}.

\begin{question}
Let $G$ be a semisimple real Lie group.
We assume that $l=\mathrm{rk}_{\bf R} (G) \geqslant 2$. 
Let $k$ be an integer $<l$ and $p$ be a real number $>1$. 
Do we have: ${\rm H}^k_{\mathrm {ct}} \bigl(G, L^p(G) \bigr) = \{0\}$?
\end{question}

In degree 1, a general result of Pansu \cite{P2} and Cornulier-Tessera \cite{CT} shows that ${\rm H}^1_{\mathrm {ct}} ( G, L^p(G) ) = \{0\}$ for every $p>1$ and
every connected Lie group $G$, unless $G$ is Gromov hyperbolic or amenable unimodular. 
In a previous paper, we obtained some partial vanishing results which lead to the latter result when the Lie groups are semisimple of rank $\geqslant 2$, and admissible in the sense that the solvable radical of some (maximal) parabolic subgroup is quasi-isometric to a real hyperbolic space \cite[Corollary 1.6]{BR}. 
This result in degree 1 can also be proved via the fixed point property for continuous affine isometric actions of higher rank semisimple groups on $L^p$-spaces \cite{BFGM}.
In general, we show in \cite{BR} the existence, for any admissible semisimple Lie group $G$ and any $p >1$, of an interval of degrees out of which the spaces ${\rm H}^k_{\mathrm {ct}} (G, L^p(G))$ vanish.

Our main result in the present paper is complementary to Gromov's question; it is the following. 

\begin{main_theorem}
\label{introduction-theorem1} 
Let $G$ be a semisimple real Lie group with finite center and let $l=\mathrm{rk}_{\bf R} (G)$. 
\smallskip 
\begin{itemize}
\item[{\rm (i)}]~We have: $\overline{{\rm H}^l_{\mathrm {ct}}} \bigl(G, L^p(G) \bigr) \neq \{0\}$ for any large enough $p>1$. 
\item[{\rm (ii)}]~For every $k >l$, we have: ${\rm H}^k_{\mathrm {ct}} \bigl(G, L^p(G) \bigr) = \{0\}$ for any large enough $p>1$. 
\end{itemize}
\end{main_theorem}

This result is proved thanks to the following line of arguments. 
We introduce an Iwasawa decomposition $G = KAN$. 
Geometrically, if $X$ denotes the symmetric space of $G$ and if $F$ denotes the maximal flat attached to the maximal ${\bf R}$-split torus $A$ in $G$, then the subgroup $K$ can be chosen to be the stabilizer of a point in $F$; the subgroup $N$ consists of the unipotent elements in a parabolic subgroup defined by a regular element in the boundary $\partial_\infty F$ of $F$. 
Since $G$ has finite center, the group $K$ is compact (it is in fact a maximal compact subgroup in $G$). 
We have the following identifications: 
\[
{\rm H}^*_{\mathrm {ct}} \bigl(G, L^p(G) \bigr) \simeq {\rm H}^*_{\mathrm {ct}} \bigl(AN, L^p(AN) \bigr) \simeq L^p {\rm H}^*_{\rm AS}(AN) \simeq L^p {\rm H}^*_{\rm dR}(AN),
\]
and similar ones for reduced cohomology. 
The first identification comes from quasi-isometric invariance \cite[Theorem 1.1]{BR}, the second one is a comparison between continuous $L^p$-cohomology and asymptotic $L^p$-cohomology proved in \cite[Theorem 3.6]{BR} and the last one is given here by Theorem \ref{appendix-theorem} (proved in Appendix \ref{app - asymptotic and de Rham}). 
This reduction explains why the main part of the paper focuses on solvable Lie groups. 
The latter situation is investigated in the remaining two subsections of this introduction and it leads to the desired vanishing and non-vanishing results above. 
See Theorem \ref{introduction-theorem4} below for the conclusion of the argument. 


Since the proof of our main result on semisimple groups is spread all over our paper, here is a summary of the strategy. 

\smallskip 

{\it Step 1: quasi-isometry invariance and Iwasawa decomposition}.--- The decomposition $G=KAN$ implies that $G$ is quasi-isometric to the solvable group $A \ltimes N$. 
Invariance of $L^p$-cohomology under quasi-isometric invariance then gives 
${\rm H}^*_{\rm ct}\bigl( G, L^p(G) \bigr) = {\rm H}^*_{\rm ct}\bigl( AN, L^p(AN) \bigr)$ and 
$\overline{{\rm H}^*_{\rm ct}} \bigl( G, L^p(G) \bigr) = \overline{{\rm H}^*_{\rm ct}}\bigl( AN, L^p(AN) \bigr)$.
We denote $R = AN = A \ltimes N$ so that we have $R \simeq \mathbf{R}^D$ where $D$ is the dimension of the Riemannian symmetric space $G/K$, as well as $A \simeq \mathbf{R}^l$ where $l$ is the real rank of $G$. 

\smallskip 

{\it Step 2: Poincar\'e duality and vanishing (after Pansu)}.--- 
Poincar\'e duality reduces the proof of Theorem A to showing that for $p$ close enough to 1, we have $L^p{\rm H}^k_{\rm dR}(R) = \{ 0 \}$ for every $k < D-l$ and that $L^p{\rm H}^{D-l}_{\rm dR}(R)$ is Hausdorff and non-zero. 
Arguments due to Pansu show that a certain contraction condition on the $A$-action on $N$ (called (nC) in Theorem \ref{introduction-theorem} below) implies the assertions about vanishing and Hausdorff property (see Corollary \ref{lie-corollary1}). 

\smallskip 
We concentrate now on the non-vanishing $L^p{\rm H}^{D-l}_{\rm dR}(R) \neq \{ 0 \}$. 

\smallskip 

{\it Step 3: Actions of abelian groups on Heintze groups and spectral sequences}.--- 
We pass now from the decomposition $R = A \ltimes N$ to the decomposition $R = B \ltimes ( \{ e^{t\xi} \}_{t \in \mathbf{R}} \ltimes N)$ where $\xi$ is given by condition (nC) as before, $H = \{ e^{t\xi} \}_{t \in \mathbf{R}} \ltimes N$ is a Heintze group and $B$ is a suitable (abelian) complement  of $\{ e^{t\xi} \}_{t \in \mathbf{R}}$ in $A$. 
The motivation for the decomposition $R = B \ltimes H$ is the possibility to use a spectral sequences and the already proved vanishings to obtain the identification: 
$L^p{\rm H}^{D-l}_{\rm dR}(R) \simeq L^p \bigl( B , L^p{\rm H}^{D-l}_{\rm dR}(H) \bigr)^B$. 
We conclude by exhibiting $L^p$ de Rham cohomology classes $c$ on $H$ such that the map $b \mapsto \Vert b \cdot c \Vert_p$ belongs to $L^p(B)$ for the $B$-action on $c$ given by (pull-back of) conjugation by $B$ on $H$. 
This is the content of the (technical) Proposition \ref{non-vanishing-proposition}, which uses condition (nT), a condition of non-triviality required for the action of all elements in the Lie algebra of $A$ (see Question \ref{introduction-question} below). 


Before we definitely move to the framework of solvable groups, let us mention the case when $p=2$ for semisimple groups. 
This situation is more directly relevant to representation theory and was considered by A.~Borel decades ago \cite{Borel}. 
The main results are: 
\begin{itemize}
\item $\overline{{\rm H}^k_{\mathrm {ct}}} \bigl( G, L^2(G) \bigr) = 0$ unless $k = \frac{D}{2}$,
\item  ${\rm H}^k_{\mathrm {ct}} \bigl( G, L^2(G) \bigr) \neq 0$ at least for $k \in \bigl(\frac{D}{2} - \frac{l_0}{2}, \frac{D}{2} +\frac{l_0}{2}\bigr]$,
\end{itemize}
where $D$ is the dimension of the Riemannian symmetric space $G/K$ and $l_0$ is the difference between the complex rank of $G$ and the complex rank of $K$. 
In the case $G = {\rm SL}_n({\bf R})$, one has $D = \frac{n^2 + n -2}{2}$ and 
$l_0 = \lfloor \frac{n-1}{2} \rfloor$. 
See also \cite{BFS} for related results about vanishing of the reduced $L^2$-cohomology. 
Since the dimension $D$ is a quadratic polynomial in the rank of the group $G$, A.~Borel's results show that the assumption that $p$ should be large enough in our results is necessary.

\subsection{Solvable groups and contractions} 
\label{ss - solvable and contraction}
As explained above, we are henceforth dealing with solvable Lie groups until the end of the introduction. 
The following result on cohomology vanishing is a consequence of \cite[Corollaire 53]{Pa99}:

\begin{theorem}
\label{introduction-theorem} 
Let $R$ be a connected Lie group of the form $A \ltimes N$ with $A \simeq {\bf R}^l$ and $l \geqslant 1$. 
Let $\frak a$ and $\frak n$ be the Lie algebras of $A$ and $N$, respectively.
Suppose that the group $R$ satisfies the following contraction property: 
\begin{itemize}
\item[{\rm (nC)}] there exists an element $\xi \in \frak a$ such that all the eigenvalues of $\mathrm{ad} \xi \restr _{\frak n}$
have negative real parts. 
\end{itemize}
Then for $p > 1$ large enough and for all $k >l$, we have the vanishings: 

\smallskip 

\centerline{$L^p\mathrm {H}_{\mathrm{dR}}^k(R) = \{0\}$.}
\end{theorem}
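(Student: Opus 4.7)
The plan is to combine Pansu's vanishing for Heintze groups \cite[Corollaire 53]{Pa99} with a Hochschild--Serre type spectral sequence argument that lifts the vanishing to all of $R$. I would first use condition (nC) to isolate a Heintze subgroup: fix any linear complement $\mathfrak{b} \subset \mathfrak{a}$ of $\mathbf{R}\xi$, and set $B = \exp \mathfrak{b} \simeq \mathbf{R}^{l-1}$, $T = \exp(\mathbf{R}\xi) \simeq \mathbf{R}$, and $H = T \ltimes N$. Since $A$ is abelian, $B$ normalises $H$, yielding the semidirect decomposition $R = B \ltimes H$; condition (nC) asserts precisely that the one-parameter group $T$ acts on $N$ with all eigenvalues of $\mathrm{ad}\,\xi|_{\mathfrak{n}}$ of negative real part, so $H$ is a Heintze group.

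Pansu's result then gives, for $p$ large enough (depending on the eigenvalues of $\mathrm{ad}\,\xi|_{\mathfrak{n}}$),
\[
L^p \mathrm{H}^k_{\mathrm{dR}}(H) = \{0\} \quad \text{for every } k \geq 2.
\]
To lift this to $R$, I would bi-grade the $L^p$ de Rham complex of $R \simeq B \times H$ into horizontal ($B$-direction) and vertical ($H$-direction) parts. This produces a spectral sequence
\[
E_2^{i,j} = \mathrm{H}^i\bigl(B;\, L^p \mathrm{H}^j_{\mathrm{dR}}(H)\bigr) \Longrightarrow L^p \mathrm{H}^{i+j}_{\mathrm{dR}}(R).
\]
Since $B \simeq \mathbf{R}^{l-1}$ has cohomology supported in degrees $0 \leq i \leq l - 1$ and, by the previous step, $L^p \mathrm{H}^j_{\mathrm{dR}}(H)$ is nonzero only possibly for $j \in \{0,1\}$, every surviving $E_2$-term has total degree at most $l$. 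Therefore $L^p \mathrm{H}^k_{\mathrm{dR}}(R) = \{0\}$ for every $k > l$, as required.

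The principal difficulty is setting up this spectral sequence in the $L^p$ framework: one has to verify that the bi-grading of forms respects $L^p$-integrability, incorporate the modular function of the non-unimodular product $R = B \ltimes H$ when splitting the Haar measure, and establish boundedness and convergence of the differentials on each page. Once this machinery is in place, the rest is bookkeeping on degrees, with the explicit identification of $L^p$-classes via the flow of $\xi$ following Pansu's integration-along-characteristics technique.
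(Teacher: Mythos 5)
Your overall strategy --- splitting off the Heintze group $H=\{e^{t\xi}\}_{t\in\mathbf R}\ltimes N$ and running a spectral sequence over $B$ --- is genuinely different from the paper's proof of this theorem, which uses no spectral sequence at all. The paper works directly on $R$: the flow $\varphi_t=R_{\exp t\xi}$ of the left-invariant field $\xi$ contracts $L^p\Omega^k(R)$ at an explicit exponential rate governed by the eigenvalues of $-\mathrm{ad}\,\xi$ on all of $\frak a\oplus\frak n$ (Proposition \ref{lie-proposition}), the homotopy operator $B_t^k=\int_0^t\varphi_s^*(\iota_\xi\,\cdot)\,ds$ converges and kills $L^p\mathrm H^k_{\mathrm{dR}}(R)$ for $k<D-l$ and $p$ close to $1$, and Poincar\'e duality (Proposition \ref{poincare-proposition}) then converts this into vanishing for $k>l$ and $p$ large (Corollary \ref{lie-corollary1}). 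That route is self-contained and avoids any fibration argument.

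The gap in your proposal is the spectral sequence itself. The version of this tool available in the $L^p$ setting (Theorem \ref{coho-theorem4}, quoted from \cite{BR}) applies only up to the \emph{first} non-vanishing degree $n$ of the fibre cohomology, requires $\mathrm H^n_{\mathrm{ct}}(H,L^p(H))$ to be Hausdorff, and gives no information about $E_2^{i,j}$ for $j\geqslant n$. Your argument needs precisely what it does not provide: control of the entire row $j=1$, where $L^p\mathrm H^1_{\mathrm{dR}}(H)\neq\{0\}$ for large $p$ (the Pansu--Cornulier--Tessera non-vanishing for Heintze groups), together with the identification $E_2^{i,1}\simeq \mathrm H^i_{\mathrm{ct}}\bigl(B,L^p(B,L^p\mathrm H^1_{\mathrm{dR}}(H))\bigr)$ and convergence of the whole sequence in the category of topological vector spaces. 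None of this is routine: the Fubini-type identification of the coefficients requires Hausdorffness of $L^p\mathrm H^1_{\mathrm{dR}}(H)$, and the naive bi-grading of $L^p$ forms on $R\simeq B\times H$ does not respect the norms, because the left-invariant metric of $R$ restricted to the slice $\{b\}\times H$ is the conjugate by $b$ of the metric of $H$ and is exponentially distorted as $\vert b\vert\to+\infty$. These are exactly the difficulties you flag at the end, but they are the substance of the proof rather than bookkeeping, so as written the argument does not close. Note that the paper only ever invokes the fibration picture (Corollary \ref{coho-corollary}) for the \emph{non-vanishing} statement, and only in the degree range where the fibre cohomology vanishes below the critical degree; for the vanishing above the rank, the direct contraction-plus-duality argument is both simpler and complete.
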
 

Note that the assumptions force $N$ to be nilpotent and contractible, and $R$ to be solvable, non-unimodular and diffeomorphic to ${\bf R}^D$, where
$D= \dim(R)$ (the notation $D$ is consistent with the previous one on dimensions of symmetric spaces by Iwasawa decomposition). 
We also mention the fact that condition (nC) already appears in N.~Varopoulos' paper \cite[OV.3 p.\! 799]{Var}. 

\smallskip 
We address here the following question:

\begin{question}
\label{introduction-question}
Let $R$ be a solvable connected Lie group as in the previous theorem, satisfying in particular condition ${\rm (nC)}$. 
Suppose additionally that $R$ satisfies the following non-triviality condition: 
\begin{itemize}
\item[{\rm (nT)}]for every non-trivial $X \in \frak a$, the operator $\mathrm{ad} X \restr _{\frak n}$ admits 
an eigenvalue with non-zero real part. 
\end{itemize}
Do we have $L^p\overline{\mathrm{H}_{\mathrm{dR}}^l}(R) \neq \{0\}$ for $p >1$ large enough?
\end{question}

If we denote by ${\rm sp}(u)$ the spectrum (in the field of complex numbers) of an endomorphism $u$ of a finite-dimensional real vector space, the above conditions can be reformulated as follows: 
\begin{itemize}
\item[{\rm (nC)}]there exists $\xi \in \frak a$ such that ${\rm sp}(\mathrm{ad} \xi \restr _{\frak n}) \subset {\bf R}_-^\times \oplus i{\bf R}$, 
\item[{\rm (nT)}]for every $X \in \frak a \setminus \{ 0 \}$, we have: ${\rm sp}(\mathrm{ad} X \restr _{\frak n}) \cap ({\bf R}^\times \oplus i{\bf R}) \neq \varnothing$.
\end{itemize} 

Condition (nC)+(nT) forces $N$ to be the nilpotent radical of $R$.
When $l=1$, condition (nC) implies trivially condition (nT); moreover the groups $R$ that satisfy (nC) form precisely the class of connected Lie groups that carry a left-invariant Riemannian 
metric of negative curvature \cite{Hze}. 
For them, it is known that Question \ref{introduction-question} admits a positive answer \cite{P2, CT}. 
The goal of the paper is to enlarge the family of groups for which Question \ref{introduction-question} is known to have a positive answer.
In Section \ref{s - proof of non-vanishings}, we prove two non-vanishing results described below.
Note that at least since we are using spectral sequences techniques, vanishing results are also useful in the proof of non-vanishing ones; this explains why condition (nC) is made before condition (nT). 

The first positive result makes a commutativity assumption on the nilpotent radical of the solvable group. 

\begin{main_theorem}
\label{introduction-theorem2} 
The answer to Question \ref{introduction-question} is yes, if one assumes in addition that 
$N \simeq {\bf R}^n$ with $n \geqslant 1$.
\end{main_theorem}

The second positive result makes a rank assumption on the solvable group, \emph{i.e.}~ a dimension assumption on the quotient of the group by its nilpotent radical. 

\begin{main_theorem}
\label{introduction-theorem3} 
The answer to Question \ref{introduction-question} is yes, if one assumes in addition that 
$l=2$, \emph{i.e.}~ if $A \simeq {\bf R}^2$.
\end{main_theorem}

Fundamental examples of groups satisfying conditions (nC) and (nT) are provided by the groups $AN$ that appear in the Iwasawa
decompositions $KAN$ of the semisimple Lie groups with finite center. 

\begin{main_theorem}
\label{introduction-theorem4} 
The answer to Question \ref{introduction-question} is yes for the groups $AN$ that appear in the Iwasawa decompositions $KAN$ of the semisimple Lie groups with finite center.
\end{main_theorem}

Together with the reduction contained in Subsection \ref{ss - ss and QI}, the latter result proves Theorem \ref{introduction-theorem1} on semisimple groups.

\subsection{Sketch of proof of the non-vanishing theorems}
\label{introduction-strategy}
Recall that $D = \dim(R)$. 
By Poincar\'e duality (see Proposition \ref{poincare-proposition}), proving vanishing and non-vanishing as stated in 
Theorem \ref{introduction-theorem} and Question \ref{introduction-question}, is equivalent to showing
that for $p > 1$ close enough to $1$:
\begin{enumerate}
\item $L^p\mathrm {H}_{\mathrm{dR}}^k(R) = \{0\}$ for every $k <D-l$,
\item $L^p\mathrm {H}_{\mathrm{dR}}^{D-l}(R)$ is Hausdorff and non-zero.
\end{enumerate}
The assumption on the existence of $\xi \in \frak a$ satisfying condition (nC), in combination with Pansu's results on $L^p$-cohomology \cite{Pa99, P1, Pa09}, 
imply that item (1) holds, as well as the Hausdorff property in item (2) 
(see Corollary \ref{lie-corollary1} for a proof).

It remains to establish that $L^p\mathrm {H}_{\mathrm{dR}}^{D-l}(R) \neq \{0\}$. For that, we use $\xi$ as in condition (nC) in order to decompose $R$ as follows. 
Write $\frak a = {\bf R} \xi \oplus \frak b$, 
where the second factor is $\frak b:=\{X \in \frak a: \mathrm{trace}(\mathrm{ad}X) = 0\}$, the Lie algebra of a connected
Lie subgroup $B < A$ isomorphic to ${\bf R}^{D-1}$.
Then $R$ can be expressed as $R = B \ltimes H$, with $H = \{e^{t\xi}\}_{t \in {\bf R}} \ltimes N$. 
Again the assumption on $\xi$ and Pansu's results on 
$L^p$-cohomology, give a precise rather simple description of the $L^p$-cohomology of $H$ 
for $p > 1$ close to $1$ -- see Corollaries \ref{lie-corollary1} and \ref{lie-corollary2}. In particular the above items (1) and (2) hold for $H$, 
in other words 
$L^p\mathrm {H}_{\mathrm{dR}}^{k} (H)$ vanishes for $k < \dim(H) -1 = D-l$, and $L^p\mathrm {H}_{\mathrm{dR}}^{D-l} (H)$ is Hausdorff and non-zero.

Now, by using a spectral sequence argument taken from \cite{BR} -- see Corollary \ref{coho-corollary}, we obtain a linear isomorphism:
\begin{eqnarray}\label{introduction-eqn}
L^p\mathrm {H}_{\mathrm{dR}}^{D-l}(R) \simeq L^p \bigl(B, L^p\mathrm {H}_{\mathrm{dR}}^{D-l} (H)\bigr)^B,
\end{eqnarray}
where $B$ acts by translations on itself and by conjugacy on $L^p\mathrm {H}_{\mathrm{dR}}^{D-l} (H)$.
The $B$-invariance implies that the right hand side space is isomorphic to
$$\Bigl\{\psi \in L^p\mathrm {H}_{\mathrm{dR}}^{D-l} (H): \int _{B} \bigl\Vert C_{b}^* (\psi) \bigr\Vert^p db < +\infty \Bigr\},$$
where $C_b$ denotes the conjugation by $b \in B$.
Under the assumptions (nC) and (nT), we give a criterion to ensure that the latter space is non-zero (see Proposition \ref{non-vanishing-proposition}). 
Finally we show that the groups in Theorems \ref{introduction-theorem2}, \ref{introduction-theorem3} and \ref{introduction-theorem4}
satisfy the criterion. 

It is worth mentioning that the above strategy would answer affirmatively Question \ref{introduction-question}
in full generality, if the following question admitted a positive answer:
\begin{question}\label{introduction-question2} Let $N$ be a connected simply connected nilpotent Lie group.
Let $X_1, X_2, \dots , X_k$ be non-trivial left-invariant vector fields on $N$. Does $N$ admit a non-zero, 
compactly supported, $C^1$ function
$f: N \to{\bf R}$, whose integral along every orbit of  $X_i$ $(i = 1, \dots , k)$ is null?
\end{question}
When the fields commute or when $k =2$, we answer affirmatively Question \ref{introduction-question2} --
see Lemmata \ref{criterium-lemma1} and \ref{criterium-lemma2}.

\subsection{Remarks and questions}
{\bf{1)}} We suspect that Condition (nT) in Question \ref{introduction-question} is a necessary condition. 
For example if $\frak a$ contains a non-trivial vector $X_0$ such that $\mathrm{ad} X_0 \restr _{\frak n}$ is \emph{semisimple}
with imaginary eigenvalues, then one has $L^p\overline{\mathrm{H}_{\mathrm{dR}}^k}(R) = \{0\}$
for every $k \geqslant 0$ and $p > 1$.
To see this, observe that for such an $X_0$ the operator $\mathrm{ad} X_0$ acting on ${\rm Lie}(R)$ is skew-symmetric. 
Therefore the left-invariant 
vector field associated to $X_0$ is a Killing vector field on $R$. 
Moreover its flow acts properly on $R$ (since it does on $A$). 
These properties, in combination with Poincar\'e duality, imply vanishing of 
$L^p\overline{\mathrm{H}_{\mathrm{dR}}^k}(R)$ in every degree -- 
see \cite[Proof of Proposition 15]{P1}.

{\bf{2)}} Cornulier told us that conditions (nC) and (nT) should admit geometric characterizations. He claims that a simply connected solvable Lie group
satisfies condition (nC) if and and only if its asymptotic cone (one or every) is bilipschitz homeomorphic to a ${\rm CAT}(0)$-space. Moreover he thinks that condition
(nT) should be equivalent to the non-existence of direct $\mathbf R$-factor in the asymptotic cone.
In the same vein, we notice that the rank $l$ of a condition (nC) group is equal to the dimension of its asymptotic cone, see \cite[Theorem 1.1]{C}.

{\bf{3)}} Nice examples of groups satisfying condition (nC) are provided by Lie groups that carry a left-invariant non-positively curved
Riemannian metric; although these two classes of groups do not exactly coincide (see \cite[Theorem 7.6]{AW}).

{\bf{4)}} Cornulier advertised us of the following potentially interesting quasi-isometric invariant:
$$p(R) := \inf \bigl\{p > 1 ~\vert~ L^p \mathrm H _{\mathrm{dR}} ^l (R) \neq \{0\} \bigr\},$$
for groups $R$ of rank $l$ that satisfy conditions (nC) and (nT). 
(In case the right side set is void we set
$p(R) = +\infty$).  Similarly is defined the invariant $\overline p (R)$ associated to the reduced $L^p$-cohomology.
When $l =1$, these invariants have been considered and computed by Pansu \cite{P2}  (see also \cite{CT} for related results).



\subsection*{Organization of the paper} 
Section \ref{s - dR} is a brief presentation of results on de Rham $L^p$-cohomology of manifolds mainly due to P.~Pansu. 
Section \ref{s - Lie group dR} applies these results to the case of Lie groups, and Section \ref{s - non-vanishing crit} provides a general non-vanishing criterion in terms of spectra of adjoint actions on solvable groups. 
Section \ref{s - proof of non-vanishings} then applies the criterion to prove our first two non-vanishing results, namely Theorems \ref{introduction-theorem2} and \ref{introduction-theorem3} above. 
Section \ref{s - semisimple} is dedicated to semisimple Lie groups: it shows, by combinatorial arguments using Cartan's classification of Riemannian symmetric spaces, that the solvable subgroups arising from Iwasawa decompositions do satisfy the previous criterion; Theorem \ref{introduction-theorem4} follows. 
Section \ref{s - psd} deals with semi-direct products and allows the use of spectral sequences, modulo a comparison result between cohomologies, available when the involved groups are diffeomorphic to ${\bf R}^D$ and which is proved in Appendix \ref{app - asymptotic and de Rham}.

\subsection*{Acknowledgements} 
We thank Pierre Pansu: the present paper elaborates on several of his ideas and results. 
We thank Yves Cornulier for useful remarks and questions.
M.B.\! was partially supported by the Labex Cempi. 

\section{de Rham $L^p$-cohomology}
\label{s - dR}

Pansu's work on de Rham $L^p$-cohomology is dense and subtle. 
In this section, we extract from his papers \cite{P1, Pa09} the ideas and results that are needed in the sequel.
Since \cite{Pa09} is not published yet, and because we only need special cases which require simpler arguments, we include full
proofs of the statements.
We hope that this section could also serve as a gentle introduction to the subject.

\subsection{Definitions}
Let $M$ be a $C^\infty$ Riemannian manifold. We denote by $d\mathrm{vol}$ its Riemannian mesure, and by $\vert v \vert$ the Riemannian length of a vector $v \in TM$.
For $k \in \bf N$, let $\Omega^k(M)$ be the space of $C^\infty$ differential $k$-forms on $M$. 

Let $p\in (1, +\infty)$. The \textit{$L^p$-norm of $\omega \in \Omega^k (M)$}
is $$\Vert \omega \Vert _{L^p \Omega^k} = \bigl(\int _M \vert \omega \vert _m^p ~d\mathrm{vol}(m)\bigr)^{1/p}, $$
$$\mathrm{where~~~~}\vert \omega \vert _m:= \sup \{ \vert \omega (m; v_1, ..., v_k) \vert: v_1, \dots, v_k \in T_m M, ~\vert v_i \vert = 1\}.$$
We denote by $L^p \Omega^k (M)$ the norm completion of the normed space $\{\omega \in \Omega^k (M): \Vert \omega \Vert _{L^p \Omega^k} < +\infty \}$, 
\emph{i.e.}~ the Banach space of $k$-differential forms with measurable $L^p$ coefficients.

Let also 
$$\Vert \omega \Vert _{\Omega^{p,k}}:= \Vert \omega \Vert _{L^p \Omega^k} + \Vert d\omega \Vert _{L^p \Omega^{k+1}}.$$ 
One defines $\Omega^{p,k} (M)$ to be equal to the norm completion of the normed space 
$\{\omega \in \Omega^k (M): \Vert \omega \Vert _{\Omega^{p,k}} < +\infty \}$. 
By construction $\Omega^{p,k} (M)$ is a Banach space and the standard differential operator extends to a bounded operator 
$$d_k: \Omega^{p,k} (M) \to \Omega^{p,k+1} (M),$$
which satisfies $d\circ d =0$.

\begin{definition} The \textit{de Rham $L^p$-cohomology} of $M$ is the cohomology of the complex $\Omega^{p,0} (M) \stackrel{d_0}{\to} 
\Omega^{p,1} (M) \stackrel{d_1}{\to} \Omega^{p,2} (M) \stackrel{d_2}{\to} \dots $. It will be
denoted by $L^p \mathrm{H_{dR}^*} (M)$. Its largest Hausdorff quotient is denoted by 
$L^p \overline{\mathrm{H^*_{dR}}} (M)$ and is called the \textit{reduced de Rham $L^p$-cohomology} of $M$.
\end{definition}

\begin{remark} According to \cite[Theorem 12.8]{GT06}, the inclusion map 
$$\{\omega \in \Omega^* (M): \Vert \omega \Vert _{\Omega^{p,*}} < +\infty \} \subset \Omega^{p, *}(M)$$ induces a 
topological isomorphism in cohomology. Therefore every element in $L^p \mathrm{H_{dR}^*} (M)$ can be represented by a smooth form.
In particular, \textit{when $M$ is a compact manifold}, its de Rham $L^p$-cohomology coincides with the standard de Rham cohomology.
\end{remark}

\subsection{Vanishing} Let $\varphi: M \to M$ be a $C^\infty$ map and $k \in \bf N$.
We denote by $\varphi^*: \Omega^k (M) \to \Omega^k (M)$, $\omega \mapsto \varphi^* (\omega)$,
the associated linear map. In case it induces a bounded operator $\varphi^*: L^p \Omega^k (M) \to L^p \Omega^k (M)$, 
we denote its operator
norm by $\Vert \varphi^* \Vert _{L^p \Omega^k \to L^p \Omega^k}$. 
Otherwise we set $\Vert \varphi^* \Vert _{L^p \Omega^k \to L^p \Omega^k} = +\infty$.
We define similarly $\Vert \varphi^* \Vert _{\Omega^{p,k} \to \Omega^{p,k}}$.

Let $\xi$ be a $C^\infty$ unit complete vector field on $M$, and let $(\varphi _t)_{t \in{\bf R}}$ be its flow.
We assume that $\varphi _t^*: L^p \Omega^k (M) \to L^p \Omega^k (M)$ is bounded for every $t \in \bf R$ and $k \in \bf N$.
For $k \in \bf N^*$, let $\iota _\xi: \Omega^k (M) \to \Omega^{k-1} (M)$ be the inner product with $\xi$.
We observe that $\varphi _t^* \circ \iota _\xi = \iota _\xi \circ \varphi _t^*$. Moreover $\iota_\xi$ contracts the norms 
$\Vert \cdot \Vert _{L^p \Omega^*}$; indeed $\vert \iota_\xi \omega \vert _m \leqslant \vert \omega \vert _m$, since $\vert \xi \vert _m =1$ by assumption. 

\begin{lemma}\label{vanishing-lemma} 
For $t \geqslant 0$ and $k \in \bf N^*$, the linear map: $B_t^k: \Omega^k (M) \to \Omega^{k-1} (M)$ 
defined by
$$B_t^k (\omega) = \int _0^t \varphi _s^* (\iota _\xi \omega) ~ds,$$
induces a bounded linear operator from $\Omega^{p,k}(M)$ to $\Omega^{p, k-1}(M)$, still denoted by $B_t^k$, 
that satisfies the following homotopy relation
$$d \circ B_t^k + B_t^{k+1} \circ d = \varphi _t^* - \mathrm{id},$$
and whose operator norm satisfies 
$$\Vert B_t^k \Vert _{\Omega^{p,k} \to \Omega^{p,k-1}} \leqslant \Vert \varphi _t^* - \mathrm{id}\Vert _{L^p \Omega^k \to L^p \Omega^k} + 
\int _0^t \Vert \varphi _s^* \Vert _{L^p \Omega^k \to L^p \Omega^k} ~ds.$$


\end{lemma}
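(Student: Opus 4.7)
The plan is to derive all three assertions from Cartan's magic formula $\mathcal{L}_\xi = d \iota_\xi + \iota_\xi d$ combined with the identity $\tfrac{d}{ds}\varphi_s^* = \varphi_s^* \mathcal{L}_\xi$ describing pull-backs along the flow. First I would establish the homotopy relation on smooth forms with bounded $\Omega^{p,k}$-norm, then extract the operator bound from the contractive behaviour of $\iota_\xi$, and finally extend by density to $\Omega^{p,k}(M)$. For a smooth $\omega$, the commutation $\varphi_s^* d = d \varphi_s^*$ gives $\tfrac{d}{ds}\varphi_s^*\omega = d(\varphi_s^*\iota_\xi\omega) + \varphi_s^*\iota_\xi d\omega$; integrating from $0$ to $t$ and pulling $d$ out of the integral (legitimate on smooth forms by a standard dominated-convergence argument) yields exactly the announced identity $\varphi_t^*\omega - \omega = d B_t^k\omega + B_t^{k+1}(d\omega)$.

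For the norm bound, I would combine the commutation $\varphi_s^*\iota_\xi = \iota_\xi\varphi_s^*$ already noted in the excerpt (which follows from $(\varphi_s)_*\xi = \xi$) with the pointwise estimate $|\iota_\xi\omega|_m \leqslant |\omega|_m$, valid because $\xi$ is a unit field. Together these produce $\|\varphi_s^*(\iota_\xi\omega)\|_{L^p\Omega^{k-1}} = \|\iota_\xi\varphi_s^*\omega\|_{L^p\Omega^{k-1}} \leqslant \|\varphi_s^*\|_{L^p\Omega^k\to L^p\Omega^k}\,\|\omega\|_{L^p\Omega^k}$, and Minkowski's integral inequality then controls $\|B_t^k\omega\|_{L^p\Omega^{k-1}}$ by $\bigl(\int_0^t \|\varphi_s^*\|_{L^p\Omega^k\to L^p\Omega^k}\,ds\bigr)\|\omega\|_{L^p\Omega^k}$. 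Rewriting the homotopy identity as $dB_t^k\omega = (\varphi_t^*-\mathrm{id})\omega - B_t^{k+1}(d\omega)$ and applying the same Minkowski estimate (now at degree $k+1$) to the last term yields the claimed operator-norm bound on $\|B_t^k\|_{\Omega^{p,k}\to\Omega^{p,k-1}}$.

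The main technical obstacle is making the integral $\int_0^t \varphi_s^*(\iota_\xi\omega)\,ds$ rigorous as a Bochner integral of $L^p$-valued maps and justifying the interchange with the exterior differential. I would sidestep this by carrying out every computation on the dense class of smooth forms with bounded $\Omega^{p,k}$-norm noted in the excerpt, where all manipulations are classical and pointwise: there, $s \mapsto \varphi_s^*(\iota_\xi\omega)$ is continuous into $L^p\Omega^{k-1}$ and the derivative of $\varphi_s^*\omega$ is computed by Cartan's formula in the strong sense. The uniform bounds derived above then extend $B_t^k$ by continuity to the completion $\Omega^{p,k}(M)$, and the homotopy identity survives the extension since both sides depend continuously on $\omega$ in the $\Omega^{p,k}$-norm.
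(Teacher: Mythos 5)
Your argument is correct and follows essentially the same route as the paper: Cartan's formula applied along the flow gives the homotopy identity on smooth forms, the commutation $\varphi_s^*\iota_\xi=\iota_\xi\varphi_s^*$ together with the pointwise contraction $|\iota_\xi\omega|_m\leqslant|\omega|_m$ gives the $L^p$ bounds, and density extends everything to $\Omega^{p,k}(M)$. One small bookkeeping point: to make the term $B_t^{k+1}(d\omega)$ land on $\Vert\varphi_s^*\Vert_{L^p\Omega^k\to L^p\Omega^k}$ rather than the degree-$(k+1)$ operator norm, you should estimate $\Vert\varphi_s^*(\iota_\xi d\omega)\Vert_{L^p\Omega^k}$ in the order written (contract first, then pull back), i.e.\ use the opposite ordering from the one you used for $B_t^k\omega$ itself --- this is exactly why the paper records the intermediate bound as a minimum over the two available degrees.
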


\begin{proof} For smooth forms the homotopy relation holds as a standard application of the classical Cartan formula 
$$\mathcal L _\xi = d \circ \iota _\xi + \iota _\xi \circ d$$ (see \emph{e.g.}~\cite[Proposition 1.121]{GHL} for this formula). Indeed for $\omega \in \Omega^k(M)$
one has 
$$\varphi _t^* (\omega) - \omega = \int _0^t \frac{d}{ds}(\varphi _s^* \omega)~ds = \int _0^t \varphi _s^* (\mathcal L _\xi \omega) ~ds
= dB_t^k (\omega) + B_t^{k+1}(d\omega).$$

We will prove that $B_t^k$ is a bounded linear operator from $\Omega^{p,k}(M)$ to $\Omega^{p, k-1}(M)$. Since smooth forms are dense in 
$\Omega^{p, *}(M)$
the homotopy formula will remain valid in $\Omega^{p, *}(M)$.

For $\omega \in \Omega^k (M)$, one has by definition
$$\Vert B_t^k (\omega) \Vert _{\Omega^{p, k-1}} = 
\Vert B_t^k (\omega) \Vert _{L^p \Omega^{k-1}} + 
\Vert d B_t^k (\omega) \Vert _{L^p \Omega^k}.$$
From the homotopy relations we get that 
\begin{align*}
\Vert B_t^k (\omega) \Vert _{\Omega^{p, k-1}}&
\leqslant \Vert B_t^k \Vert _{L^p \Omega^k \to L^p \Omega^{k-1}} \cdot \Vert \omega \Vert _{L^p \Omega^k} \\
&+ \Vert \varphi _t^* - \mathrm{id} \Vert _{L^p \Omega^k \to L^p \Omega^k} \cdot \Vert \omega \Vert _{L^p \Omega^k}\\
&+ \Vert B_t^{k+1} \Vert _{L^p \Omega^{k+1} \to L^p \Omega^k} \cdot \Vert d \omega \Vert _{L^p \Omega^{k+1}}.
\end{align*}
Since $\varphi _s^* \circ \iota _\xi = \iota _\xi \circ \varphi _s^*$, 
and since the maps $\iota _\xi: L^p \Omega^* (M) \to L^p \Omega^{*-1} (M)$ are contracting, one has 
$$\Vert B_t^* \Vert _{L^p \Omega^* \to L^p \Omega^{*-1}} \leqslant 
\int _0^t \min \{\Vert \varphi _s^* \Vert _{L^p \Omega^{*-1} \to L^p \Omega^{*-1}} , \Vert \varphi _s^* \Vert _{L^p \Omega^* \to L^p \Omega^*}\}~ds.$$
The expected upper bound for $\Vert B_t^k \Vert _{\Omega^{p,k} \to \Omega^{p,k-1}}$ follows easily.
\end{proof}

We now establish a vanishing result.

\begin{proposition}\label{vanishing-proposition} \cite[Proposition 10]{P1} Let $p \in (1, +\infty)$ and $k \in \bf N^*$. Suppose that there exists $C, \eta >0$, such that for every $t\geqslant 0$:
$$\max _{i = k-1, k} \Vert \varphi _t^* \Vert _{L^p \Omega^i \to L^p \Omega^i} \leqslant C e^{-\eta t}.$$
Then $L^p \mathrm{H}_{\mathrm{dR}}^k (M) = \{0\}$.
\end{proposition}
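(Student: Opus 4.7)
The plan is, given a closed form $\omega \in \Omega^{p,k}(M)$ (so $d\omega = 0$), to produce an explicit primitive $\sigma \in \Omega^{p,k-1}(M)$ with $d\sigma = -\omega$, which then forces $[\omega] = 0$ in $L^p\mathrm{H}_{\mathrm{dR}}^k(M)$. The candidate primitive will be defined as $\sigma := \lim_{t \to \infty} B_t^k(\omega)$, where $B_t^k$ is the homotopy operator from Lemma \ref{vanishing-lemma}; for closed $\omega$ the homotopy formula reduces to $d B_t^k(\omega) = \varphi_t^*(\omega) - \omega$, and the exponential decay of $\Vert \varphi_t^* \Vert _{L^p\Omega^i \to L^p\Omega^i}$ for $i = k-1, k$ should be enough to pass to the limit $t \to \infty$ in both factors.

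For the $L^p\Omega^{k-1}$-convergence, I would use that $\iota_\xi$ is a contraction on $L^p$-norms together with the decay bound on $L^p\Omega^{k-1}$ to estimate, for $s < t$:
$$\bigl\Vert B_t^k(\omega) - B_s^k(\omega) \bigr\Vert _{L^p \Omega^{k-1}} \leqslant \int _s^t \Vert \varphi_u^* \Vert _{L^p\Omega^{k-1} \to L^p\Omega^{k-1}} \,du \cdot \Vert \omega \Vert _{L^p\Omega^k} \leqslant \frac{C}{\eta} \bigl(e^{-\eta s} - e^{-\eta t}\bigr) \Vert \omega \Vert _{L^p \Omega^k},$$
so $(B_t^k(\omega))_{t \geqslant 0}$ is Cauchy in $L^p\Omega^{k-1}(M)$. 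The decay bound on $L^p\Omega^k$ then yields $\Vert \varphi_t^*(\omega) \Vert _{L^p\Omega^k} \leqslant Ce^{-\eta t}\Vert \omega \Vert _{L^p\Omega^k} \to 0$, hence $d B_t^k(\omega) \to -\omega$ in $L^p\Omega^k(M)$.

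Since $\Omega^{p,k-1}(M)$ is by definition the completion with respect to the graph norm $\Vert \cdot \Vert _{L^p\Omega^{k-1}} + \Vert d \cdot \Vert _{L^p\Omega^k}$, the two convergences above say exactly that $(B_t^k(\omega))_{t \geqslant 0}$ is Cauchy in $\Omega^{p,k-1}(M)$; its limit $\sigma$ therefore lies in $\Omega^{p,k-1}(M)$ and the extended differential satisfies $d\sigma = -\omega$. This proves the vanishing. The main (mild) obstacle lies in this last bookkeeping, namely in reading the two separate $L^p$-convergences as genuine convergence in the completed Banach space $\Omega^{p,k-1}(M)$ with the correct extended differential; this is automatic from the definition of $\Omega^{p,k-1}(M)$ as a completion and the fact that $d$ is closed as a bounded operator on that completion.
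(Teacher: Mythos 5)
Your argument is correct and is essentially the paper's own proof: both rely on the homotopy operator $B_t^k$ of Lemma \ref{vanishing-lemma}, show that $B_t^k(\omega)$ converges in $\Omega^{p,k-1}(M)$ as $t \to +\infty$ using the decay in degree $k-1$, and pass to the limit in $dB_t^k(\omega) = \varphi_t^*(\omega) - \omega$ using the decay in degree $k$. The only (immaterial) difference is that the paper proves convergence of the operators $B_t^k$ in operator norm via the identity $B_{t_2}^k - B_{t_1}^k = \varphi_{t_1}^* \circ B_{t_2-t_1}^k$, whereas you establish the Cauchy property of $B_t^k(\omega)$ for a fixed closed $\omega$ by a direct integral estimate.
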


\begin{proof} First, the assumption (with $i=k$) and the previous lemma imply that the operators 
$B_t^k: \Omega^{p,k}(M) \to \Omega^{p, k-1}(M)$ 
are bounded independently of $t \geqslant 0$. 
Secondly, the assumption (with $i = k-1$) implies that $\Vert \varphi _t^* \Vert _{\Omega^{p,k-1} \to \Omega^{p,k-1}} \to 0$, when $t \to +\infty$.
We claim that these two observations imply that $B_t^k$ converges in norm to an operator 
$B_\infty^k: \Omega^{p,k}(M) \to \Omega^{p, k-1}(M)$, when $t \to +\infty$.
Indeed, by using a change of variable, one has for $0 \leqslant t_1 \leqslant t_2$:
$$B_{t_2}^k - B_{t_1}^k = \int _{t_1}^{t_2} \varphi _s^* \circ \iota _\xi ~ds = \varphi _{t_1}^* \circ B_{t_2 - t_1}^k.$$
Thus the above observations yield:
$$ \Vert B_{t_2}^k - B_{t_1}^k \Vert _{\Omega^{p,k} \to \Omega^{p, k-1}} 
\leqslant \Vert \varphi _{t_1}^* \Vert _{\Omega^{p,k-1} \to \Omega^{p,k-1}} \cdot \Vert B_{t_2- t_1}^k \Vert _{\Omega^{p,k} \to \Omega^{p, k-1}} \to 0$$
when $t_1, t_2$ tend to $+\infty$. Thus the claim follows from completeness.

Now let $\omega \in \Omega^{p, k}(M) \cap \ker d$. From the homotopy relations in the previous lemma, we have for every $t \geqslant 0$
$$d B_t^k (\omega) = \varphi _t^* (\omega) - \omega .$$
By letting $t \to +\infty$, we obtain the following relation in $\Omega^{p, k}(M)$ 
$$ d B_\infty^k (\omega) = - \omega.$$
Therefore $\omega \in d\Omega^{p,k-1}$ and thus $L^p \mathrm{H}_{\mathrm{dR}}^k (M) = \{0\}$.
\end{proof}

\subsection{Identification}

We relate the $L^p$-cohomology of $M$ with the cohomology of certain complexes of currents (Proposition \ref{current-proposition}). 
In some cases, this will lead to non-vanishing
cohomology. See \cite{DS} for a nice introduction to the theory of currents.

\noindent \textit{Another point of view for $\Omega^{p, *}(M)$}. 
Let $D = \dim(M)$. Let $\Omega _c^k (M)$ be the space of compactly supported $C^\infty$ differential $k$-forms, endowed with the $C^\infty$ topology. A \emph{$k$-current}
on $M$ is by definition a continuous real valued linear form on $\Omega _c^{D-k} (M)$. We denote by $\mathcal D '^k(M)$ the space of $k$-currents on $M$ endowed with the weak*-topology. 

The 
\textit{differential} of a $k$-current $T$ is the $(k+1)$-current $dT$ defined
by $dT(\alpha):= (-1)^{k+1} T(d\alpha)$, for every $\alpha \in 
\Omega _c^{D-k-1} (M)$. It defines a map $d$ satisfying $d \circ d =0$.

To every $\omega \in L^p \Omega^k(M)$, one associates the $k$-current $T_\omega$
defined by $T_\omega (\alpha):= \int _M \omega \wedge \alpha$.
The \textit{differential in the sense of currents} of $\omega \in L^p \Omega^k(M)$ is the $(k+1)$-current 
$d\omega:= dT_\omega$.  
One says that $d\omega$ \textit{belongs to $L^p \Omega^{k+1}(M)$} if there exits 
$\theta \in L^p \Omega^{k+1}(M)$ such $d\omega = T_\theta$. These definitions are consistent with the Stokes formula:
$$\int _M d\omega \wedge \alpha = (-1)^{k+1} \int _M \omega \wedge d \alpha.$$

We will use the following caracterisation of the space $\Omega^{p,*}(M)$:

\begin{lemma}
\label{omega-lemma}
The space $\Omega^{p,k} (M)$ is equal to the subspace 
of $L^p \Omega^k (M)$ consisting 
of the $L^p$ $k$-forms whose differentials in the sense of currents belong to $L^p \Omega^{k+1} (M)$. 
Moreover the differential operator $d$ on $\Omega^{p,*} (M)$ agrees with the differential in the sense of currents.
\end{lemma}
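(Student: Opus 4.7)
Let $\widetilde\Omega^{p,k}(M)$ denote the subspace of $L^p\Omega^k(M)$ consisting of those $L^p$ $k$-forms whose differential in the sense of currents lies in $L^p\Omega^{k+1}(M)$. My plan is to prove $\Omega^{p,k}(M)=\widetilde\Omega^{p,k}(M)$ together with the identification of differentials by showing both inclusions, and in each case checking that the natural notion of differential coincides with the current one.

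The inclusion $\Omega^{p,k}(M)\subseteq\widetilde\Omega^{p,k}(M)$ is the easy one. I would take $\omega\in\Omega^{p,k}(M)$ and choose a sequence of smooth forms $\omega_n$ that is Cauchy in the $\Omega^{p,k}$-norm and converges to $\omega$; then $d\omega_n\to\theta$ in $L^p\Omega^{k+1}(M)$ for some $\theta$, and this $\theta$ is by construction the differential of $\omega$ in $\Omega^{p,k}(M)$. For any $\alpha\in\Omega_c^{D-k-1}(M)$, the form $\omega_n\wedge\alpha$ has compact support, so Stokes' theorem gives
$$\int_M d\omega_n\wedge\alpha=(-1)^{k+1}\int_M\omega_n\wedge d\alpha.$$
Because $\alpha$ and $d\alpha$ are smooth and compactly supported, they lie in every $L^q$; passing to the limit via H\"older yields $T_\theta(\alpha)=(-1)^{k+1}T_\omega(d\alpha)$, which is precisely $dT_\omega=T_\theta$. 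This simultaneously shows $\omega\in\widetilde\Omega^{p,k}(M)$ and that the two notions of $d\omega$ agree.

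For the reverse inclusion I would use a Meyers--Serrin style argument on the manifold. Fix a locally finite smooth partition of unity $\{\chi_i\}$ subordinate to a cover of $M$ by relatively compact coordinate charts. The Leibniz rule for the current differential, verified by integration by parts against test forms, yields $d(\chi_i\omega)=d\chi_i\wedge\omega+\chi_i\,d\omega$, and the right-hand side lies in $L^p\Omega^{k+1}(M)$ because $d\chi_i$ is bounded with compact support. Each $\chi_i\omega$ is thus a compactly supported element of $\widetilde\Omega^{p,k}(M)$ concentrated in a chart. In that chart, I would write $\chi_i\omega$ in the coordinate frame and mollify each coefficient function by convolution with a Friedrichs mollifier $\rho_\epsilon$. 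Since in coordinates $d$ acts on coefficient functions as a first-order constant-coefficient operator, convolution commutes with $d$, so $d(\rho_\epsilon*\chi_i\omega)=\rho_\epsilon*d(\chi_i\omega)$. On the compact set $\mathrm{supp}(\chi_i)$ the Riemannian and Euclidean $L^p$-norms are comparable, hence the standard $L^p$ regularization theorem furnishes smooth $\Omega^{p,k}$-approximations of $\chi_i\omega$. Summing diagonally, with mollification scales $\epsilon_i(n)$ chosen so that the $i$-th piece contributes an error of size at most $2^{-i}/n$ in the $\Omega^{p,k}$-norm, produces a smooth form $\omega_n$; local finiteness makes the sum pointwise finite and hence smooth, and the triangle inequality gives $\omega_n\to\omega$ in $\Omega^{p,k}$-norm.

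The main obstacle is precisely this globalization step: the convolutions live in local charts and must be glued via the partition of unity into a single approximation converging on all of $M$, which requires uniform control of the Riemannian-vs-Euclidean norm ratios on each $\mathrm{supp}(\chi_i)$ and a careful diagonal extraction exploiting local finiteness. A minor ancillary point is the sign bookkeeping in the Leibniz formula for the current differential, which must be reconciled with the convention $dT(\alpha)=(-1)^{k+1}T(d\alpha)$ used above.
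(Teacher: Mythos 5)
Your argument is correct and follows the same two-step structure as the paper: the inclusion $\Omega^{p,k}(M)\subseteq\widetilde\Omega^{p,k}(M)$ via continuity of $L^p$-convergence into currents (plus Stokes for the compatibility of the two differentials), and the reverse inclusion via regularization. The only real difference is in how the regularization is obtained: the paper simply invokes the smoothing operators $R_\varepsilon$ of \cite[Theorem 12.5]{GT06}, which commute with $d$ and converge to the identity in $L^p$ on forms and on their current differentials, whereas you build such an approximation by hand in Meyers--Serrin style (partition of unity, Leibniz rule for the current differential, Friedrichs mollification in charts, diagonal summation). Your version is more self-contained and makes visible exactly where the work lies -- the commutation $d(\rho_\epsilon*\,\cdot\,)=\rho_\epsilon*d(\,\cdot\,)$, the chartwise comparability of norms, and the absolutely convergent gluing $\sum_i\bigl(\rho_{\epsilon_i(n)}*(\chi_i\omega)-\chi_i\omega\bigr)$ with errors $2^{-i}/n$, which correctly sidesteps any need for uniform bounds on $\sum_i|d\chi_i|$ -- at the cost of length; the paper's citation buys brevity but hides the construction. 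Both yield the same conclusion, including the identification of the $\Omega^{p,*}$-differential with the current differential.
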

 
\begin{proof} Since convergence in $L^p$ implies convergence in the sense of currents, $\Omega^{p,k} (M)$ is contained in space of $L^p$ $k$-forms whose differentials in the sense of currents belong
to $L^p$. The proof of the reverse inclusion is based on a regularization process: according to \cite[Theorem 12.5]{GT06}
there exists a family of operators $R _\varepsilon$, such that for every $L^p$ $k$-form $\omega$ whose differential in the sense of currents belongs
to $L^p$, one has:
\begin{enumerate}
\item $R _\varepsilon \omega$ is a $C^\infty$ $k$-form on $M$,
\item $dR _\varepsilon \omega = R _\varepsilon d\omega$,
\item $\Vert R _\varepsilon \omega - \omega \Vert _{L^p \Omega^k}$ and 
$\Vert R _\varepsilon d\omega - d\omega \Vert _{L^p \Omega^{k+1}}$ tends to $0$ when $\varepsilon \to 0$.
\end{enumerate}
Theses properties imply that $\omega$ belongs to $\Omega^{p,k} (M)$.
\end{proof}

\noindent \textit{The complexes $\Psi^{p, *}(M)$ and $\Psi^{p , *}(M, \xi)$}. Following \cite{Pa09}, 
we introduce two complexes of currents. 
\begin{definition}
For $p \in (1, +\infty)$ and $k \in \bf N$, let $\Psi^{p,k} (M)$ be the space of $k$-currents
$\psi \in \mathcal D '^k(M)$ that can be written
$\psi = \beta + d\gamma$, with $\beta \in L^p \Omega^k (M)$ and $\gamma \in L^p \Omega^{k-1} (M)$. 
In particular $\Psi^{p,0} (M) = L^p(M)$.
For $\psi \in \Psi^{p,k} (M)$, let 
\begin{align*} 
\Vert \psi \Vert _{\Psi^{p,k}} = \inf \Bigl\{&\Vert \beta \Vert _{L^p \Omega^k} + \Vert \gamma \Vert _{L^p \Omega^{k-1}}: 
\psi = \beta + d\gamma, \\
& ~~\mathrm{with}~~ \beta \in L^p \Omega^k (M)~~\mathrm{and}~~ \gamma \in L^p \Omega^{k-1} (M)\Bigr\}.
\end{align*}
\end{definition}

\begin{lemma}\label{current-lemma1}
$\Vert \cdot \Vert _{\Psi^{p,k}}$ is a norm on $\Psi^{p,k} (M)$. With this norm
$\Psi^{p,k} (M)$ is a Banach space. The inclusion maps $L^p\Omega^k (M) \subset \Psi^{p,k} (M) \subset \mathcal D '^k(M)$ are continuous.
The differentials in the sense of currents induce continuous operators
$d_k: \Psi^{p,k} (M) \to \Psi^{p,k+1} (M)$,
that satisfy $d \circ d =0$.
\end{lemma}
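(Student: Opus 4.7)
The plan is to handle the four assertions in a natural order: norm axioms first, then completeness, then continuity of the inclusions, and finally continuity of $d$ together with $d\circ d=0$.

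For the norm axioms, homogeneity is immediate from rescaling both summands, and the triangle inequality follows by summing decompositions: if $\psi_i=\beta_i+d\gamma_i$ is an $\eps$-almost optimal decomposition for $\psi_i$, then $\psi_1+\psi_2=(\beta_1+\beta_2)+d(\gamma_1+\gamma_2)$ bounds $\Vert\psi_1+\psi_2\Vert_{\Psi^{p,k}}$ by $\Vert\psi_1\Vert+\Vert\psi_2\Vert+2\eps$. Positive definiteness uses that $L^p$-convergence implies convergence in $\mathcal{D}'^k(M)$: if $\Vert\psi\Vert_{\Psi^{p,k}}=0$, pick decompositions $\psi=\beta_n+d\gamma_n$ with $\beta_n\to 0$ in $L^p\Omega^k$ and $\gamma_n\to 0$ in $L^p\Omega^{k-1}$; then $\beta_n\to 0$ and $d\gamma_n\to 0$ as currents, so $\psi=0$.

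For completeness, the cleanest route is to realize $\Psi^{p,k}(M)$ as a quotient Banach space. Consider the continuous linear map
$$\Phi:L^p\Omega^k(M)\oplus L^p\Omega^{k-1}(M)\longrightarrow \mathcal{D}'^k(M),\quad (\beta,\gamma)\mapsto T_\beta+dT_\gamma,$$
whose image is $\Psi^{p,k}(M)$ by definition. The norm $\Vert\cdot\Vert_{\Psi^{p,k}}$ is exactly the quotient norm on the image, with kernel $W=\ker\Phi$. So it suffices to check that $W$ is closed in the direct sum; but this again follows from continuity of the inclusion $L^p\hookrightarrow\mathcal{D}'^*$ together with the continuity of $d$ on currents, which together imply $\Phi$ is continuous into $\mathcal{D}'^k(M)$ with its weak-$*$ topology. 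Thus $\Psi^{p,k}(M)$ inherits a Banach space structure.

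The inclusion $L^p\Omega^k(M)\hookrightarrow\Psi^{p,k}(M)$ is continuous because for $\omega\in L^p\Omega^k(M)$ the tautological decomposition $\omega=\omega+d(0)$ gives $\Vert\omega\Vert_{\Psi^{p,k}}\leqslant\Vert\omega\Vert_{L^p\Omega^k}$. The inclusion $\Psi^{p,k}(M)\hookrightarrow\mathcal{D}'^k(M)$ is continuous because for any test form $\alpha\in\Omega_c^{D-k}(M)$ and any decomposition $\psi=\beta+d\gamma$,
$$|\psi(\alpha)|\leqslant\Bigl|\int_M\beta\wedge\alpha\Bigr|+\Bigl|\int_M\gamma\wedge d\alpha\Bigr|\leqslant \Vert\beta\Vert_{L^p\Omega^k}\,\Vert\alpha\Vert_{L^{p'}}+\Vert\gamma\Vert_{L^p\Omega^{k-1}}\,\Vert d\alpha\Vert_{L^{p'}},$$
so taking the infimum yields $|\psi(\alpha)|\leqslant C_\alpha\,\Vert\psi\Vert_{\Psi^{p,k}}$.

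Finally, for $\psi=\beta+d\gamma\in\Psi^{p,k}(M)$, the differential in the sense of currents gives $d\psi=d\beta+d^2\gamma=d\beta$ as a current, which admits the decomposition $d\psi=0+d\beta$ with $0\in L^p\Omega^{k+1}$ and $\beta\in L^p\Omega^k$. Hence $\Vert d\psi\Vert_{\Psi^{p,k+1}}\leqslant\Vert\beta\Vert_{L^p\Omega^k}\leqslant\Vert\beta\Vert_{L^p\Omega^k}+\Vert\gamma\Vert_{L^p\Omega^{k-1}}$, and taking the infimum over decompositions gives $\Vert d\psi\Vert_{\Psi^{p,k+1}}\leqslant\Vert\psi\Vert_{\Psi^{p,k}}$. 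The relation $d\circ d=0$ is inherited from $\mathcal{D}'^*(M)$. The only mildly delicate step is the closedness of $W$ used for completeness; everything else is a routine verification from the definitions and the continuity of $L^p\hookrightarrow\mathcal{D}'^*$.
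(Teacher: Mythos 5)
Your proof is correct and follows essentially the same route as the paper: positive definiteness and continuity into $\mathcal D'^k(M)$ via the H\"older estimate, completeness by identifying $\Psi^{p,k}(M)$ with the quotient of $L^p\Omega^k(M)\times L^p\Omega^{k-1}(M)$ by the (closed) kernel, and the tautological decompositions for the two remaining continuity claims. The only cosmetic difference is that you deduce closedness of the kernel from weak-$*$ continuity of $(\beta,\gamma)\mapsto T_\beta+dT_\gamma$, whereas the paper deduces it from the already-established fact that the quotient seminorm is a norm; these are equivalent observations.
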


\begin{proof}
To see that $\Vert \cdot \Vert _{\Psi^{p,k}}$ is a norm, we only need to check that $\Vert \psi \Vert _{\Psi^{p,k}}=0$ implies $\psi =0$. 
This is a consequence of the following inequality. Let $\psi = \beta + d\gamma \in \Psi^{p,k} (M)$, then by H\"older
inequality one has 
for every $\alpha \in \Omega _c^{D -k-1} (M)$:
\begin{align*}
\vert \psi (\alpha) \vert &= \Bigl\vert \int _M \beta \wedge \alpha + (-1)^{k+1}\int _M \gamma \wedge d\alpha \Bigr\vert\\ 
&\leqslant 
\Vert \beta \Vert _{L^p \Omega^k} \cdot \Vert \alpha \Vert _{L^q \Omega^{D- k}}
+ 
\Vert \gamma \Vert _{L^p \Omega^{k-1}} \cdot \Vert d\alpha \Vert _{L^q \Omega^{D- k +1}},
\end{align*}
where $1/p + 1/q =1$. It also shows that the inclusion map 
$\Psi^{p,k} (M) \subset \mathcal D '^k(M)$ is continuous. The continuity of $L^p\Omega^k (M) \subset \Psi^{p,k} (M)$
is obvious.
To show the completeness, one notices that $\Psi^{p,k}(M)$ is isometric to $L^p \Omega^k(M) \times L^p \Omega^{k-1}(M) / E$, where $E$ is the subspace 
$$E := \{(\beta, \gamma) ~\vert ~ \beta + d \gamma =0\}.$$
Since $\Vert \cdot \Vert _{\Psi^{p,k}}$ is a norm,
$E$ is a closed subspace. Since $L^p \Omega^k(M) \times L^p \Omega^{k-1}(M)$ is a Banach space, the quotient space is Banach too.
Finally the last statement in the lemma is obvious.
\end{proof}

Let $\xi$ be a $C^\infty$ unitary complete vector field on $M$, and denote its flow by $\varphi_t$. 
We suppose that it induces a bounded linear operator 
$\varphi _t^*: L^p \Omega^k (M) \to L^p \Omega^k(M)$, for every $t \in \bf R$ and $k \in \bf N$.
Then it induces an automorphism of the complex $\Psi^{p,*}(M)$ 
whose operator norm satisfies 
$$\Vert \varphi _t^* \Vert _{\Psi^{p,k} \to \Psi^{p,k}} \leqslant \max _{i = k-1, k} \Vert \varphi _t^* \Vert _{L^p \Omega^{i} \to L^p \Omega^{i}}.$$


\begin{definition} For $p \in (1, +\infty)$ and $k \in \bf N$, we set 
$$\Psi^{p,k} (M, \xi)= \{\psi \in \Psi^{p,k} (M): \varphi _t^* (\psi) = \psi~~ \mathrm{for~~ every}~~ t \in \bf R\}.$$
The complex $\Psi^{p,*} (M, \xi)$ is a closed subcomplex of $\Psi^{p,*}(M)$. Let 
$$\mathcal Z^{p,k} (M, \xi) = \Ker \bigl(d: \Psi^{p,k} (M, \xi) \to \Psi^{p,k+1} (M, \xi)\bigr)$$
be the space of $k$-cocycles.
\end{definition}

The next proposition elaborates on \cite[Proposition 10]{P1}. 
It is also a special case of \cite[Corollary 12]{Pa09}.

\begin{proposition}\label{current-proposition} Let $p \in (1, +\infty)$ and $k \in \bf N$. 
Suppose that there exists $C, \eta >0$ such that for every $t \geqslant 0$,
one has 
$$\max _{i = k-1, k} \Vert \varphi _t^* \Vert _{L^p \Omega^{i} \to L^p \Omega^{i}} \leqslant C e^{-\eta t}.$$
(When $i=-1$, we set $L^p \Omega^{-1}:= L^p \Omega^{0}$ for convenience.)
Then there is a canonical Banach isomorphism 
$$L^p \mathrm{H}_{\mathrm{dR}}^{k+1} (M) \simeq \mathcal Z^{p,k+1} (M, \xi),$$
in particular $L^p \mathrm{H}_{\mathrm{dR}}^{k+1} (M)$ is Hausdorff.
\end{proposition}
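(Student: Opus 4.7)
The plan is to build an explicit bijection $L^p\mathrm{H}_{\mathrm{dR}}^{k+1}(M) \simeq \mathcal{Z}^{p,k+1}(M,\xi)$ by pushing each closed $L^p$ representative to its limit under the flow; the key inputs are the homotopy of Lemma \ref{vanishing-lemma} together with the hypothesis that $\varphi_t^*$ contracts on $L^p\Omega^k$, and the characterisation of $\Omega^{p,\ast}$ supplied by Lemma \ref{omega-lemma}.

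\textbf{Forward map.} Given a closed $\omega \in \Omega^{p,k+1}(M)$, the homotopy reads $\varphi_t^*\omega = \omega + d B_t^{k+1}\omega$. Using the semigroup identity $B_{t_2}^{k+1} - B_{t_1}^{k+1} = \varphi_{t_1}^*\circ B_{t_2 - t_1}^{k+1}$ already exploited in the proof of Proposition \ref{vanishing-proposition}, together with the uniform bound $\sup_{t\geqslant 0}\Vert B_t^{k+1}\Vert_{L^p\Omega^{k+1}\to L^p\Omega^k} \leqslant \int_0^\infty \Vert \varphi_s^*\Vert_{L^p\Omega^k\to L^p\Omega^k}\,ds < \infty$, the decay $\Vert\varphi_t^*\Vert_{L^p\Omega^k\to L^p\Omega^k}\to 0$ forces $(B_t^{k+1}\omega)$ to be Cauchy in $L^p\Omega^k$, hence to converge to some $\gamma$. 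Set $\psi := \omega + d\gamma \in \Psi^{p,k+1}(M)$. Then $d\psi = 0$, and since $\varphi_s^*\psi = \lim_{t\to\infty}\varphi_{s+t}^*\omega = \psi$, one has $\psi \in \mathcal{Z}^{p,k+1}(M,\xi)$. To descend to cohomology, note that if $\omega = d\alpha$ with $\alpha \in \Omega^{p,k}$, then $\varphi_t^*\omega = d\varphi_t^*\alpha$, and the decay hypothesis on $L^p\Omega^k$ sends this to $0$ in $\Psi^{p,k+1}(M)$.

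\textbf{Inverse map.} For $\psi \in \mathcal{Z}^{p,k+1}(M,\xi)$, pick any decomposition $\psi = \beta + d\gamma$ with $\beta \in L^p\Omega^{k+1}(M)$ and $\gamma \in L^p\Omega^k(M)$. Closedness of $\psi$ means $d\beta = 0$ as a current, so Lemma \ref{omega-lemma} yields $\beta \in \Omega^{p,k+1}(M)\cap\ker d$ and supplies the candidate class $[\beta]$. Independence from the decomposition uses the same lemma: a second decomposition $\beta'+d\gamma'$ gives $d(\gamma'-\gamma) = \beta-\beta' \in L^p\Omega^{k+1}$, whence $\gamma'-\gamma \in \Omega^{p,k}$ and $\beta-\beta'$ is exact in the $\Omega^{p,\ast}$ complex. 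The two maps are mutually inverse: the forward construction produces $\psi = \omega + d\gamma$, so $\omega$ itself is a valid $\beta$; conversely, $\varphi_t^*\beta = \psi - d\varphi_t^*\gamma \to \psi$ in $\Psi^{p,k+1}(M)$ since $\varphi_t^*\gamma \to 0$ in $L^p\Omega^k$, identifying the forward image of $[\beta]$ with $\psi$.

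\textbf{Topology and main obstacle.} Boundedness of the forward map on closed representatives follows from $\Vert\psi\Vert_{\Psi^{p,k+1}}\leqslant \Vert\omega\Vert_{L^p\Omega^{k+1}} + \Vert\gamma\Vert_{L^p\Omega^k}$ and the uniform estimate above; boundedness of the inverse is immediate from the definition of $\Vert\cdot\Vert_{\Psi^{p,k+1}}$. Since $\mathcal{Z}^{p,k+1}(M,\xi)$ is the intersection of closed subspaces of the Banach space $\Psi^{p,k+1}(M)$ (kernels of the continuous operators $d$ and $\varphi_t^*-\mathrm{id}$), it is itself a Banach space, the bijection is a topological isomorphism, and Hausdorffness of $L^p\mathrm{H}_{\mathrm{dR}}^{k+1}(M)$ follows. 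The one genuinely delicate step is the convergence $B_t^{k+1}\omega \to \gamma$ in $L^p\Omega^k$: one has to wring the required decay from a hypothesis that only bounds $\varphi_t^*$ in degrees $k-1$ and $k$, so the semigroup identity and the contraction of $\iota_\xi$ must do all the work; everything else is a rather formal consequence of Lemma \ref{omega-lemma} and the continuity properties of the complex $\Psi^{p,\ast}$.
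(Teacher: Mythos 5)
Your proof is correct and follows essentially the same route as the paper: the map $\omega \mapsto \omega_\infty = \lim_{t\to\infty}\varphi_t^*\omega = \omega + dB_\infty^{k+1}\omega$ is exactly the paper's map $P$, and the identification of $\mathcal Z^{p,k+1}(M,\xi)$ with the cohomology via Lemma \ref{omega-lemma} is the same. The only (harmless) divergence is at the end: you build the explicit inverse $\psi \mapsto [\beta]$ and verify direct norm bounds in both directions, whereas the paper checks injectivity and surjectivity of $P$ separately (using the invariance-plus-decay argument to kill $d(B_\infty^{k+1}(\beta)-\gamma)$) and then invokes the open mapping theorem to upgrade the continuous bijection to a Banach isomorphism.
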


\begin{proof} The proof is divided into several steps.

(1) We define a bounded linear map 
$$P: \Omega^{p, k+1}(M) \cap \Ker d \to \mathcal Z^{p,k+1} (M, \xi)$$ 
as follows. 
Consider the operators $B^* _t$ defined in Lemma \ref{vanishing-lemma}. 
We claim that the operators $B _t^{k+1}: L^p \Omega^{ k+1}(M) \to L^p \Omega^k(M)$ converge in norm to an operator
$B _\infty^{k+1}$, when $t$ tends to $+\infty$. 
Indeed from its definition one has 
$$\Vert B_t^{k+1} \Vert _{L^p \Omega^{k+1} \to L^p \Omega^k} \leqslant \int _0^t \Vert \varphi^* _s \Vert _{L^p \Omega^k \to L^p \Omega^k} ds.$$
Thus our assumption implies that $B_t^{k+1}$ is bounded independently of $t \geqslant 0$.
Moreover one has $B_{t_2}^{k+1} - B_{t_1}^{k+1}= \varphi^* _{t_1} \circ B_{t_2- t_1}^{k+1}$ for $0\leqslant t_1 \leqslant t_2$.
With our assumption we get that:
\begin{align*}
\Vert B_{t_2}^{k+1} &- B_{t_1}^{k+1} \Vert_{ L^p \Omega^{ k+1} \to L^p \Omega^k} \\
&\leqslant 
\Vert \varphi _{t_1}^* \Vert _{L^p \Omega^k \to L^p \Omega^k} \cdot \Vert B_{t_2-t_1}^{k+1} \Vert _{ L^p \Omega^{ k+1} \to L^p \Omega^k}
\to 0
\end{align*}
when $t_1, t_2 \to +\infty$. The claim follows from completeness.

Let $\omega \in \Omega^{p, k+1}(M) \cap \Ker d$.
For $t \geqslant 0$ one has thanks to Lemma \ref{vanishing-lemma}:
$$\varphi_t^* (\omega) = \omega + dB _t^{k+1} (\omega).$$ 
By letting $t \to +\infty$, and by using the fact that $\Omega^{p, k+1}(M) \cap \Ker d \subset L^p \Omega^{ k+1}(M)$, the claim implies that $\varphi_t^* (\omega)$ converges in $\Psi^{p, k+1}(M)$ to 
the current $\omega _\infty:= \omega + d B_\infty^{k+1} (\omega)$. 

We set $P(\omega):= \omega _\infty$. Since the maps $d$ and $\varphi^*$ are continuous on 
$\Psi^{p, k+1} (M)$, one has $d \omega _\infty =0$ and $\varphi _t^* (\omega _\infty) = \omega _\infty$. 
Therefore $P(\omega)$ belongs to $\mathcal Z^{p,k+1} (M, \xi)$. Moreover $P$ is continuous; indeed 
$$\Vert P \Vert \leqslant 1+ \Vert B^{p, k+1} _\infty \Vert_{L^p \Omega^{k+1} \to L^p \Omega^k}.$$

(2) $P(\omega) =0$ implies $[\omega] =0$ in $L^p \mathrm{H}_{\mathrm{dR}}^{k+1} (M)$:
If $\omega _\infty =0$, one has $\omega = - d B_\infty^{k+1} (\omega)$ 
with $B_\infty^{k+1} (\omega) \in L^p \Omega^{k}(M)$. Therefore by Lemma \ref{omega-lemma} one has $[\omega] =0$.

(3) $P$ is surjective: Let $\psi \in \mathcal Z^{p,k+1} (M, \xi)$. Then there exists $\beta \in L^p \Omega^{k+1}(M)$
and $\gamma \in L^p \Omega^k (M)$ such that $\psi = \beta + d\gamma$ and $d\beta = 0$. By Lemma \ref{omega-lemma}, one has $\beta \in \Omega^{p,k+1}(M) \cap \Ker d$, and:
$$\beta _\infty = \beta + d B_\infty^{k+1} (\beta) = \psi + d \bigl(B_\infty^{k+1} (\beta) -\gamma\bigr).$$
Since $\psi$ and $\beta _\infty$ are both $\varphi _t^*$-invariant, so is $d (B_\infty^{k+1} (\beta) -\gamma)$.
But by our assumption, one has $\Vert \varphi _t^* \Vert _{\Psi^{p,k} \to \Psi^{p,k}} \leqslant C e^{-\eta t}$, and thus:
$$\varphi_t^* \bigl(d (B_\infty^{k+1} (\beta) -\gamma)\bigr) = d\bigl(\varphi_t^* (B_\infty^{k+1} (\beta) -\gamma)\bigr) \to 0$$
when $t \to +\infty$.
Therefore we get that $d (B_\infty^{k+1} (\beta) -\gamma) =0$, and so $\psi = \beta _\infty \in \im P$.

(4) $[\omega] =0$ implies $P(\omega) =0$:
Suppose that $\omega = d \alpha$ with $\alpha \in \Omega^{p,k}(M)$. Then 
$$\omega _\infty = d\alpha + d B_\infty^{k+1} (\omega) = d\bigl(\alpha + B_\infty^{k+1} (\omega)\bigr),$$
with $\alpha + B_\infty^{k+1}(\omega) \in \Psi^{p,k} (M)$. The argument in paragraph (3) yields that 
$d(\alpha + B_\infty^{k+1} (\omega)) = 0$. Thus $\omega_\infty =0$.

(5) Conclusion: Items (1) and (4) show that $P$ induces a continuous homomorphism from $L^p \mathrm{H}_{\mathrm{dR}}^{k+1} (M)$ to $\mathcal Z^{p,k+1} (M, \xi)$; and items (2) and (3) that it is bijective. In particular $L^p \mathrm{H}_{\mathrm{dR}}^{k+1} (M)$ is
Hausdorff and thus a Banach space. By Lemma \ref{current-lemma1}, the space $\mathcal Z^{p,k+1} (M, \xi)$ is Banach too. Therefore Banach Theorem completes the proof.
\end{proof}

\subsection{Non-vanishing} 
The next proposition is a non-vanishing result which complements Proposition \ref{current-proposition}. 
It is a special case of \cite[Corollary 32]{Pa09}.

Observe that the space $\Ker (\iota _\xi: L^p \Omega^* (M) \to L^p \Omega^{* -1}(M))$ is $\varphi_t^*$-invariant, since $\iota _\xi$ and $\varphi_t^*$ commute.
\begin{proposition}\label{current-proposition2} 
Let $p \in (1, +\infty)$ and $k \in \{0, \dots, D-2\}$ where $D = \dim M$. 
Suppose that $(\varphi _t) _{t \in{\bf R}}$ acts properly on $M$ and 
that there exists
$C, \eta > 0$ such that for $t \geqslant 0$:
\begin{align*}
&\Vert \varphi^* _t \Vert _{L^p \Omega^k \to L^p \Omega^k} \leqslant C e^{-\eta t},\\
\mathrm{and}~~~~~~ &\Vert \varphi^* _{-t} \Vert _{L^p \Omega^{k +1} \cap \Ker \iota _\xi \to L^p \Omega^{k+1} \cap \Ker \iota _\xi} \leqslant C e^{-\eta t}.
\end{align*}
(Note the opposite signs in front of $t$ in the left side members). 
Then $\mathcal Z^{p,k+1} (M, \xi)$ is non-trivial.
\end{proposition}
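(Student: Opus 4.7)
The plan is to construct a nonzero element of $\mathcal{Z}^{p,k+1}(M,\xi)$ by a two-sided averaging of an auxiliary test form along the flow of $\xi$, splitting the integral at $t=0$ so that each of the two decay hypotheses controls one of the half-line integrals. Concretely, I would choose a $k$-form $\eta \in L^p\Omega^k(M)$ such that its current differential $\omega := d\eta$ lies in $L^p\Omega^{k+1}(M) \cap \Ker\iota_\xi$, and set
\[
\psi \;:=\; \int_0^{+\infty} \varphi_{-s}^*\omega\, ds \;+\; d\!\int_0^{+\infty} \varphi_t^*\eta\, dt.
\]
The first integral converges absolutely in $L^p\Omega^{k+1}(M)$ because $\Ker\iota_\xi$ is $\varphi_s^*$-stable (since $\iota_\xi$ and $\varphi_s^*$ commute), so the second hypothesis $\|\varphi_{-s}^*\|_{L^p\Omega^{k+1}\cap\Ker\iota_\xi} \leq Ce^{-\eta s}$ applies to $\omega$. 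The inner integral of the second summand converges in $L^p\Omega^k(M)$ by the first hypothesis $\|\varphi_t^*\|_{L^p\Omega^k} \leq Ce^{-\eta t}$, so the second summand lies in $d(L^p\Omega^k(M)) \subset \Psi^{p,k+1}(M)$. Consequently $\psi \in \Psi^{p,k+1}(M)$.

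Next I would verify that $\psi$ belongs to $\mathcal{Z}^{p,k+1}(M,\xi)$. Closedness is immediate from $d\omega = 0$ and $d^2 = 0$. For the invariance under $\varphi_r^*$, a change of variables computation for $r>0$ yields
\[
\varphi_r^*\psi - \psi \;=\; \int_0^r \varphi_v^*\omega\, dv \;-\; d\!\int_0^r \varphi_v^*\eta\, dv \;=\; 0,
\]
using $\omega = d\eta$ and the commutation of $d$ with the $v$-integral; invariance for $r < 0$ then follows from the group property of $(\varphi_r^*)_{r \in \mathbb{R}}$. Therefore $\psi$ is a closed $\varphi_t^*$-invariant element of $\Psi^{p,k+1}(M)$.

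Finally, and this is where I expect the main obstacle, I would show that $\psi \neq 0$ for a judicious choice of $\eta$. Pairing with a closed compactly supported test form $\alpha \in \Omega_c^{D-k-1}(M)$ and applying the change of variables $\varphi_s$ on $M$ formally yields
\[
\psi(\alpha) \;=\; \int_M \omega \wedge \Bigl(\int_0^{+\infty} \varphi_s^*\alpha\, ds\Bigr),
\]
and properness of $(\varphi_t)$ ensures that for every $m \in M$ the set $\{s \in \mathbb{R} : \varphi_s(m) \in \operatorname{supp}(\alpha)\}$ is compact, so the inner ``half-average'' is a well-defined smooth $(D-k-1)$-form on $M$. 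The quantitative heart is to exhibit a pair $(\eta, \alpha)$ making this pairing nonzero: one must use the proper action of $\xi$ to localize initial data transversely to the flow while respecting the constraint $\omega \in \Ker\iota_\xi$, and the range condition $k \leq D-2$ is what leaves enough room for such a transverse test form. This construction is the content of \cite[Corollary 32]{Pa09}.
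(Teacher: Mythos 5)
Your construction of $\psi$ as a two-sided flow average is sound as far as it goes: granting the existence of $\eta\in L^p\Omega^k(M)$ with $d\eta\in L^p\Omega^{k+1}(M)\cap\Ker\iota_\xi$, the two decay hypotheses do make both half-line integrals converge, and your change-of-variables computation correctly shows that $\psi$ is a closed, $\varphi_t^*$-invariant element of $\Psi^{p,k+1}(M)$. But the proposition asserts that $\mathcal Z^{p,k+1}(M,\xi)$ is \emph{non-trivial}, and the zero current satisfies all of the above; the entire content of the statement is the non-vanishing, which you explicitly leave open (``this is where I expect the main obstacle'') and ultimately delegate to Pansu's Corollary 32 --- which is circular here, since the proposition is presented precisely as a special case of that result and must be proved. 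Neither the properness of the flow nor the constraint $k\leqslant D-2$ is actually used in your argument except as a heuristic in the final paragraph, which is a sign that the decisive step is missing.

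The paper's proof avoids the averaging altogether and makes the non-vanishing manifest from the start. Properness gives each orbit an invariant flow-box neighborhood $U\simeq{\bf R}\times V$ on which $\varphi_t$ acts by translation in the ${\bf R}$-factor. One picks a $\varphi_t$-invariant form $\alpha\in\Omega^k(M)\cap\Ker\iota_\xi$ supported in $U$ (essentially a $k$-form pulled back from the $(D-1)$-dimensional transversal $V$) with $d\alpha\neq 0$; the hypothesis $k\leqslant D-2$ is exactly what guarantees such a non-closed $\alpha$ exists. Then $\psi:=d\alpha$ is a nonzero smooth form, hence a nonzero closed invariant current, and the only thing left to check is that $\psi\in\Psi^{p,k+1}(M)$. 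This is done by writing $\alpha=\chi\cdot\alpha+(1-\chi)\cdot\alpha$ with a cutoff $\chi$ in the flow variable, so that $\psi=\beta+d\gamma$ with $\beta=d\chi\wedge\alpha+\chi\cdot d\alpha$ and $\gamma=(1-\chi)\cdot\alpha$: the decay of $\varphi_{-t}^*$ on $L^p\Omega^{k+1}\cap\Ker\iota_\xi$ puts $\chi\cdot d\alpha$ in $L^p$ on the forward half of the box, and the decay of $\varphi_t^*$ on $L^p\Omega^k$ puts $(1-\chi)\cdot\alpha$ in $L^p$ on the backward half. Note that this $\alpha$ is typically \emph{not} in $L^p$ (it is constant along the flow direction), so it cannot serve as your $\eta$; the invariant-form-plus-cutoff device is the idea your proposal is missing. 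To salvage your route you would have to exhibit a concrete pair $(\eta,\alpha)$ with $\int_M\omega\wedge\bigl(\int_0^{+\infty}\varphi_s^*\alpha\,ds\bigr)\neq 0$, which amounts to redoing this localization in a flow-box anyway.
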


\begin{proof} Since $(\varphi _t) _{t \in{\bf R}}$ acts properly on $M$, every orbit admits an invariant open neighborhood 
$U \simeq {\bf R} \times V$ on which $\varphi_t$ acts like a translation along the $\bf R$ factor.
In the sequel we suppose that such an orbit neighborhood has been choosen.

Pick a non-trivial $\varphi_t$-invariant form $\alpha \in \Omega^k (M) \cap \Ker \iota _\xi$ which is supported in $U$. Since $k \in \{0, \dots, D-2\}$, we can choose $\alpha$ so that $d \alpha \neq 0$. Set $\psi:= d\alpha$.
We claim that $\psi \in \mathcal Z^{p,k+1} (M, \xi)$. 

Let $\chi$ be a $C^\infty$ function on $U \simeq {\bf R} \times V$,
depending only on the $\bf R$-variable, and such that $\chi (t) = 0$ for $t \leqslant 0$ and $\chi (t) =1$ for $t \geqslant 1$. 
One has $\alpha = \chi \cdot \alpha + (1 - \chi)\cdot \alpha$, and thus:
$$\psi = \beta + d\gamma ~~\mathrm{with}~~ \beta = d(\chi \cdot \alpha) = d\chi \wedge \alpha + \chi \cdot d\alpha 
~~\mathrm{and}~~ \gamma = (1 - \chi)\cdot \alpha.$$
The form $d\chi \wedge \alpha$ has compact support. The form $d \alpha$ belongs to $\Ker \iota _\xi$, thanks to the formula 
$\mathcal L _\xi = d \circ \iota _\xi + \iota _\xi \circ d$. Since the restriction of 
$d\alpha$ to $[0,1] \times V$ belongs to $L^p$, the assumption on the norm of $\varphi_{-t}^*$ implies that the restriction of $d\alpha$ to $[0, +\infty) \times V$ belongs to $L^p$ too.
Therefore $\beta \in L^p \Omega^{k+1} (M)$. 

Since the restriction of $\alpha$ to $[0,1] \times V$ belongs to $L^p$, the assumption on the norm of $\varphi _t^*$ implies that the restriction 
of $\alpha$ to $(-\infty, 0]\times V$
belongs to $L^p$ too. Therefore $\gamma \in L^p \Omega^k(M)$.
\end{proof}


\subsection{Poincar\'e duality} Parts of vanishing and non-vanishing results for $L^p$-cohomology 
rely on the following version of Poincar\'e duality. 

\begin{proposition}\label{poincare-proposition} Let $M$ be a complete oriented Riemannian manifold of dimension $D$.
Let $p \in (1, +\infty)$, $q = p/(p-1)$ and $0 \leqslant k \leqslant D$. Then 
\begin{itemize}
\item $L^p {\rm H}_{\mathrm{dR}}^k (M)$ is Hausdorff if and only if $L^q {\rm H}_{\mathrm{dR}}^{D-k +1} (M)$ is.
\item $L^p \overline{{\rm H}^k _{\mathrm{dR}}} (M)$ and $L^q \overline{{\rm H}^{D-k}_{\mathrm{dR}}} (M)$ are dual Banach spaces.
Thus $L^p \overline{{\rm H}^k _{\mathrm{dR}}} (M) = \{0\}$ if and only if $L^q \overline{{\rm H}^{D-k}_{\mathrm{dR}}} (M) = \{0\}$.
\end{itemize}
\end{proposition}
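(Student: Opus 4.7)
My plan is to package both assertions as consequences of Banach duality for closed densely defined operators. The first task is to exhibit $L^q\Omega^{D-k}(M)$ as the Banach dual of $L^p\Omega^k(M)$: using orientability, the Hodge star is a pointwise isometry $\Omega^k\to\Omega^{D-k}$, so combining the standard $L^p$--$L^q$ duality of Riemannian $k$-forms with $*$ gives the perfect pairing
$$\langle\omega,\eta\rangle:=\int_M\omega\wedge\eta,\qquad \omega\in L^p\Omega^k(M),\ \eta\in L^q\Omega^{D-k}(M).$$
I would then view $d$ as a closed, densely defined operator $d_p^k\colon L^p\Omega^k(M)\supset\Omega^{p,k}(M)\to L^p\Omega^{k+1}(M)$; closedness follows from Lemma \ref{omega-lemma} and density is trivial. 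The crucial computation is to show that the Banach adjoint $(d_p^k)^*$ has domain exactly $\Omega^{q,D-k-1}(M)$ and acts as $(-1)^{k+1}d$. This amounts to the integration by parts formula
$$\int_M d\omega\wedge\eta=(-1)^k\int_M\omega\wedge d\eta,\qquad \omega\in\Omega^{p,k-1}(M),\ \eta\in\Omega^{q,D-k}(M),$$
combined with Lemma \ref{omega-lemma} to convert the domain condition on $\eta$ into $d\eta\in L^q$. Rewriting the identity as $\int_M d(\omega\wedge\eta)=0$, which makes sense since $\omega\wedge\eta\in L^1\Omega^{D-1}$ and $d(\omega\wedge\eta)\in L^1\Omega^D$ by H\"older, I would establish it by a cut-off argument: choose $\chi_R$ obtained by smoothing $\phi(d(\cdot,o)/R)$ for a plateau $\phi$, so that $|d\chi_R|_\infty\le C/R$ and $\chi_R\to 1$ on compact sets; Stokes on the compactly supported $\chi_R\,\omega\wedge\eta$ yields $\int\chi_R\,d(\omega\wedge\eta)=-\int d\chi_R\wedge(\omega\wedge\eta)$, and passing to the limit the left side converges by dominated convergence while the right side is bounded by $(C/R)\|\omega\wedge\eta\|_1\to 0$. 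Completeness of $M$ is exactly what guarantees the existence of such exhausting cut-offs.

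Once the adjoint identification is secured, part (i) is the closed range theorem applied to $T=d_p^{k-1}$: the image $B^k_p:=d(\Omega^{p,k-1})$ is closed in $L^p\Omega^k(M)$ if and only if the image of $T^*=\pm d_q^{D-k}$, namely $B^{D-k+1}_q$, is closed in $L^q\Omega^{D-k+1}(M)$. Since $Z^k_p:=\ker d_p^k$ is automatically closed in $L^p\Omega^k(M)$, closedness of $B^k_p$ there is equivalent to Hausdorffness of $L^p\mathrm{H}^k_{\mathrm{dR}}(M)=Z^k_p/B^k_p$; the symmetric argument on the $L^q$ side yields (i). For part (ii), the orthogonality relations for an adjoint pair of closed densely defined operators give
$$\overline{B^k_p}=(Z^{D-k}_q)^\perp\subset L^p\Omega^k(M),\qquad (Z^k_p)^\perp=\overline{B^{D-k}_q}\subset L^q\Omega^{D-k}(M),$$
with annihilators taken through $\langle\cdot,\cdot\rangle$. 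The general formula $(W/V)^*\simeq V^\perp/W^\perp$ for closed subspaces $V\subset W$ of a Banach space, applied to $V=\overline{B^k_p}\subset W=Z^k_p$ inside $L^p\Omega^k(M)$, then identifies $\bigl(L^p\overline{\mathrm{H}^k_{\mathrm{dR}}}(M)\bigr)^*$ with $Z^{D-k}_q/\overline{B^{D-k}_q}=L^q\overline{\mathrm{H}^{D-k}_{\mathrm{dR}}}(M)$, and reflexivity of $L^p$ makes the duality symmetric, giving in particular the vanishing equivalence.

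I expect the only nontrivial step to be the integration by parts formula on a complete Riemannian manifold with no geometric restriction, that is, the vanishing $\int_M d\sigma=0$ for $\sigma\in L^1\Omega^{D-1}(M)$ whose distributional differential lies in $L^1\Omega^D(M)$; everything afterwards is standard Banach functional analysis once the adjoint is in place. A minor point to verify is that the closure of $B^k_p$ in $L^p\Omega^k(M)$ agrees with its closure in $\Omega^{p,k}(M)$, so that the reduced cohomology is unambiguous; this holds because any $L^p$-limit of elements of $B^k_p\subset\ker d_p^k$ has vanishing distributional differential, so the $L^p$- and $\Omega^{p,k}$-topologies coincide along such a convergent sequence.
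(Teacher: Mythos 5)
The paper does not prove this proposition itself but simply cites \cite[Corollaire 14]{P1} and \cite{GT10}, and your argument is essentially the proof from those references: realize $d$ as a closed densely defined operator whose Banach adjoint under the wedge pairing is $\pm d$ on the conjugate complex (the Gaffney-type cut-off argument using completeness being the only analytic input), then apply the closed range theorem for (i) and the annihilator/quotient duality for (ii). This is correct and matches the intended route.
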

\begin{proof} See \cite[Corollaire 14]{P1} or \cite{GT10}.
\end{proof}

\subsection{Examples and complements}\label{example-complement}
{\bf {1) }} 
Let $M$ be a complete connected Riemannian manifold of infinite volume and dimension $D$, 
and let $p \in (1, +\infty)$, $q = p/(p-1)$.
It is obvious that $L^p {\rm H}_{\mathrm{dR}}^0 (M) = \{0\}$. By Poincar\'e duality (Proposition \ref{poincare-proposition})
this implies that $L^q \overline{{\rm H}^D _{\mathrm{dR}}} (M) = \{0\}.$
In another hand, again by Poincar\'e duality, $L^q {\rm H}_{\mathrm{dR}}^D (M)$ is Hausdorff if and only if 
$L^p {\rm H}_{\mathrm{dR}}^1 (M)$ is. When $M$ admits a complete unit vector field whose flow expands exponentially the volume of $M$,
then $L^p {\rm H}_{\mathrm{dR}}^1 (M)$ is Hausdorff for every $p \in (1, +\infty)$. Indeed this follows from Proposition
\ref{current-proposition} applied with $k=0$. This shows in particular that any connected non-unimodular Lie group $G$
of dimension $D$ satisfies $L^q {\rm H}_{\mathrm{dR}}^D(G) = \{0\}$ for every $q \in (1, +\infty)$. In fact, Pansu in \cite[Th\'eor\`eme 1]{P2} shows that for any 
non-compact connected Lie group $G$, either  $L^p {\rm H}_{\mathrm{dR}}^1(G)$ is Hausdorff 
or $G$ is amenable unimodular. See also \cite{T} for generalizations to topological groups.

{\bf {2) }} 
Under assumptions of Proposition \ref{current-proposition}, one has 
$\Vert \varphi _t^* \Vert _{\Psi^{p,k} \to \Psi^{p,k}} \to 0$ when $t \to +\infty$, and thus
$\Psi^{k,p}(M, \xi) = \{0\}$.
Therefore the conclusion of Proposition \ref{current-proposition} can be stated as
$L^p \mathrm{H}_{\mathrm{dR}}^{k+1} (M) \simeq \mathrm{H}^{k+1} (\Psi^{p,*}(M, \xi))$.
In \cite[Theorem 4]{Pa09}, Pansu establishes a general result that relates the $L^p$-cohomology of $M$ 
with the cohomology of $\Psi^{p,*}(M, \xi)$.

{\bf{3)}} Suppose $M$ is diffeomorphic to ${\bf R} \times N$ and that $\xi = \frac{\partial}{\partial t}$ is the unit vector field carried by the ${\bf R}$-factor. Let $\pi: M \to N$ be the projection,
and let $\pi^*: \mathcal D'^i(N) \to \mathcal D'^i(M)$ be the continuous extension of the pull-back map $\pi ^* : \Omega ^i(N) \to \Omega ^i (M)$. Then, under the assumptions of Proposition
\ref{current-proposition}, one can prove that every $\psi \in \mathcal Z^{p,k+1}(M, \xi)$ can be
written as $\psi = \pi^*(T)$, with $T \in \mathcal D'^{k+1}(N) \cap \Ker d$. In other words, one has 
$$\mathcal Z^{p,k+1}(M, \xi) = \pi^* \bigl(\mathcal D'^{k+1}(N) \cap \Ker d\bigr) \cap \Psi^{p, k+1}(M).$$
Therefore, the isomorphism $L^p \mathrm{H}_{\mathrm{dR}}^{k+1} (M) \simeq \mathcal Z^{p,k+1} (M, \xi)$ can be interpreted by viewing $\mathcal Z ^{p,k+1}(M, \xi)$ as a ``boundary values'' space for the 
cohomology classes in $L^p \mathrm{H}_{\mathrm{dR}}^{k+1} (M)$. See \cite{Pa09} for more information along these lines.

\section{The Lie group case}
\label{s - Lie group dR}

This section collects applications of the previous results to Lie groups. 
It will serve as a main tool in the paper. 

Let $G$ be a connected Lie group equipped with a left-invariant Riemannian metric. Let $D = \dim(G)$ and let $\mathfrak g$ 
be its Lie algebra. 
As usual the left and right multiplications by $g \in G$ are denoted $L_g$ and $R_g$,
and we let $C_g = L_g \circ R_{g^{-1}}$ be the conjugacy by $g$.
Given a unit vector $\xi \in \mathfrak g$, 
we will still denote by $\xi$ the associated left-invariant vector field on $G$. 
Its flow is $\varphi _t = R_{\exp t \xi}$.

The following result is implicit in \cite{Pa99}, see in particular \cite[Corollaire 53]{Pa99} and  \cite[Proposition 57]{Pa99}. 

\begin{proposition}
\label{lie-proposition} 
Let $\xi \in \frak g$ and suppose that its flow acts properly on $G$. 
Let $\lambda _1 \leqslant \lambda _2 \leqslant \dots \leqslant \lambda _D$ 
be the real parts of the eigenvalues of $(-\mathrm{ad}\xi) \in \mathrm{End}(\mathfrak g)$, 
enumerated with their multiplicities in their generalized eigenspaces. We denote by $w_k = \sum _{i=1}^{k+1} \lambda _i$ 
the sum of the $(k+1)$ first ones, and by $W_k = \sum _{j=1}^k \lambda _{D-j+1}$ 
the sum of the $k$ last ones. 
Let $h = \sum _{i=1}^D \lambda _i$ be the trace of $(-\mathrm{ad} \xi)$, and suppose that $h>0$. Let $k \in \{0, \dots, D-1\}$.
\begin{enumerate}
\item If $1< p< \frac{h}{W_k}$, then $L^p \mathrm{H}^k _{\mathrm{dR}}(G) = \{0\}$.
\item Under the same assumptions, the space $L^p \mathrm{H}^{k+1} _{\mathrm{dR}}(G)$ is 
Hausdorff and Banach isomorphic to $\mathcal Z^{p,k+1}(M, \xi)$.
\item If $1 \leqslant \frac{h}{w_{k+1}}<p<\frac{h}{W_k}$, then 
$L^p \mathrm{H}^{k+1} _{\mathrm{dR}}(G) \neq \{0\}$.
\end{enumerate}
(When $k=0$, we set $W_0 :=0$ and $\frac{h}{W_0} := +\infty$.)
\end{proposition}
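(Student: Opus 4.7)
The plan is to apply the three general results of the previous subsection (Propositions \ref{vanishing-proposition}, \ref{current-proposition}, and \ref{current-proposition2}) to the complete unit left-invariant vector field associated to $\xi/\vert\xi\vert$, whose flow is $\varphi_t = R_{\exp t\xi}$. All three items then reduce to the same core input: explicit exponential bounds on $\Vert \varphi_t^* \Vert_{L^p \Omega^i \to L^p \Omega^i}$ in terms of the eigenvalue data of $\mathrm{ad}\,\xi$.

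\textbf{Step 1 (Norm estimate).} Pick a basis of $\mathfrak g$ adapted to the generalized eigenspace decomposition of $\mathrm{ad}\,\xi$, and use the dual basis of left-invariant $1$-forms $\theta_1, \dots, \theta_D$. Since $\varphi_t^*$ of a left-invariant form is again left-invariant and, evaluated at the identity, coincides with $(\mathrm{Ad}(\exp(-t\xi)))^*$, the pointwise norm of $\varphi_t^*$ on a monomial $\theta_I = \theta_{i_1} \wedge \dots \wedge \theta_{i_k}$ is bounded by $Q(t)\, e^{t \sigma_I}$ with $\sigma_I = \sum_{i \in I} \lambda_i$ and $Q(t)$ a polynomial (coming from the nilpotent parts of the Jordan decomposition). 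Combining this with the change of variable identity involving the modular function $\Delta(\exp t\xi) = e^{-th}$ on the function coefficients, one obtains for $t \geqslant 0$:
\[
\Vert \varphi_t^* \Vert_{L^p \Omega^k \to L^p \Omega^k} \leqslant C\, Q(t)\, e^{t (W_k - h/p)},
\]
the worst monomial being one realizing $\sigma_I = W_k$. A parallel computation shows that $\Vert \varphi_{-t}^* \Vert_{L^p \Omega^{k+1} \cap \Ker \iota_\xi \to L^p \Omega^{k+1} \cap \Ker \iota_\xi}$ is bounded by $C\, Q(t)\, e^{-t(w_{k+1} - h/p)}$ for $t \geqslant 0$; here one uses that $\xi$ itself lies in the $0$-eigenspace of $\mathrm{ad}\,\xi$, so that subsets $I$ appearing in $\Ker \iota_\xi$ still achieve the minimum $\sigma_I = w_{k+1}$.

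\textbf{Step 2 (Items (1) and (2)).} For $1 < p < h/W_k$, Step 1 yields genuine exponential decay of $\Vert \varphi_t^* \Vert$ on the $L^p$-norms of both $k$- and $(k-1)$-forms as $t \to +\infty$ (polynomial factors are absorbed by the strict inequality). Proposition \ref{vanishing-proposition} then gives $L^p \mathrm{H}^k_{\mathrm{dR}}(G) = \{0\}$, which is item (1), and Proposition \ref{current-proposition} gives the Banach isomorphism $L^p \mathrm{H}^{k+1}_{\mathrm{dR}}(G) \simeq \mathcal{Z}^{p,k+1}(G,\xi)$, which is item (2).

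\textbf{Step 3 (Item (3)).} Under the additional assumption $h/w_{k+1} < p$, the second estimate in Step 1 provides the required decay of $\varphi_{-t}^*$ on $L^p \Omega^{k+1} \cap \Ker \iota_\xi$; combined with the decay of $\varphi_t^*$ on $L^p \Omega^k$ from Step 2 and the properness of the $\xi$-flow, Proposition \ref{current-proposition2} yields $\mathcal{Z}^{p,k+1}(G,\xi) \neq \{0\}$, hence $L^p \mathrm{H}^{k+1}_{\mathrm{dR}}(G) \neq \{0\}$ via the identification of item (2). The main technical point is Step 1: producing these norm estimates requires carefully tracking the interplay between the adjoint action of $\exp t\xi$ on forms and the modular volume change, and absorbing the polynomial Jordan-block factors using the strict inequalities on $p$.
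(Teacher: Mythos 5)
Your proposal is correct and follows essentially the same route as the paper: reduce everything to the operator-norm estimates $\Vert \varphi_t^* \Vert_{L^p \Omega^k \to L^p \Omega^k} \lesssim e^{t(W_k - h/p + \varepsilon)}$ and $\Vert \varphi_{-t}^* \Vert_{L^p \Omega^{k+1} \cap \Ker \iota_\xi} \lesssim e^{t(-w_{k+1} + h/p + \varepsilon)}$ via the adjoint action on left-invariant forms together with the Jacobian $e^{ht}$, then feed these into Propositions \ref{vanishing-proposition}, \ref{current-proposition} and \ref{current-proposition2}; your key observation that $\xi$ sits in the $0$-eigenspace so that $\Ker \iota_\xi$ still realizes the growth rate $w_{k+1}$ is exactly the paper's argument. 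The only cosmetic difference is that you keep an explicit polynomial Jordan-block factor $Q(t)$ where the paper absorbs it into an $\varepsilon$ in the exponent; both are handled by the strict inequalities on $p$.
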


\begin{remark} 
The above proposition applies to non-unimodular Lie groups only (but no solvability or semi-simplicity assumption is made). 
Indeed the trace of $\mathrm{ad} \xi$ is supposed to be non-zero.
\end{remark}

\begin{proof} (1) and (2). To obtain vanishing and identification of the cohomology from Propositions \ref{vanishing-proposition} and \ref{current-proposition}, we need to control the norm of
$\varphi^* _t: L^p \Omega^*(G)
\to L^p \Omega^*(G)$, for $t \geqslant 0$.
Let $k \in \{0, \dots, D-1\}$ and $\omega \in \Omega^k(G)$. Suppose first that $\omega$ is left-invariant. Then $\varphi^* (\omega)$
is left-invariant too. In particular their norms $\vert \omega \vert _g$ and $\vert \varphi _t^* (\omega) \vert _g$ are independant of $g \in G$.
We have 
$$ \varphi _t^* (\omega) = R_{\exp t \xi}^* (\omega)
= L^* _{\exp -t\xi}(R_{\exp t \xi}^* (\omega)) = C _{\exp -t\xi}^* (\omega). $$
Moreover for $X_1, \dots, X_k \in \mathfrak g$, we have
\begin{align*}
C _{\exp -t\xi}^* (\omega) (X_1, \dots, X_k)& 
= \omega \bigl(\mathrm{Ad} (\exp -t \xi) X_1, \dots, \mathrm{Ad} (\exp -t \xi) X_k\bigr)\\
& = \omega (e^{-t \mathrm{ad} \xi} X_1, \dots, e^{-t \mathrm{ad} \xi} X_k).
\end{align*}
In other words, by identifying the space of left-invariant $k$-forms with $\Lambda^k \mathfrak g^*$, we get that
$C _{\exp -t\xi}^* (\omega) = (e^{-t \mathrm{ad} \xi})^* (\omega).$ 

Observe that $W_k$ is the largest real part 
of the eigenvalues of $(-\mathrm{ad} \xi)^*$ acting on $\Lambda^k \mathfrak g^*$. 
Thus for every $\varepsilon >0$ there is a constant $C = C(\varepsilon, k) >0$ such that
for all $\omega \in \Lambda^k \mathfrak g^*$ and $t\ge0$, one has
$$\vert \varphi _t^* (\omega) \vert \leqslant C e^{(W_k + \varepsilon)t} \vert \omega \vert.$$
Now let $\omega$ be any element of $\Omega^k(G)$ (it is not supposed to be left-invariant anymore). 
Pick a basis $(\theta _I)$ of $\Lambda^k \mathfrak g^*$ seen as the space of left-invariant $k$-forms on $G$.
Then $\omega$ decomposes uniquely as $\omega = \sum _I f_I \theta _I$, where $f_I \in \Omega^0(G)$. 
Since the norms on $\Lambda^k \mathfrak g^*$ are all equivalent, there exists a constant $D = D(k)>0$ such that for every 
$\omega \in \Omega^k(G)$ and $g \in G$: 
$$D^{-1} \bigl( \sum _I \vert f_I(g) \vert^p \bigr) ^{1/p}
\leqslant \vert \omega \vert _g \leqslant 
D \bigl( \sum _I \vert f_I(g) \vert^p \bigr) ^{1/p}.$$
On the other hand, for $t \geqslant 0$: 
$$\vert \varphi_t^* (\omega) \vert _g = \bigl\vert \sum _I f_I \circ \varphi _t\cdot \varphi _t^* (\theta _I) \bigr\vert _g
\leqslant C \sum _I \vert (f_I \circ \varphi _t)(g) \vert  e^{(W_k + \varepsilon)t}.$$
Therefore: 
\begin{align*}
\Vert \varphi _t^* (\omega) \Vert _{L^p \Omega^k}^p &= \int _G \vert \varphi _t^* (\omega) \vert _g^p ~d\mathrm{vol}(g)\\
&\leqslant C^p e^{p(W_k + \varepsilon)t} \int_G \sum _I \vert (f_I \circ \varphi _t)(g) \vert^p ~d \mathrm{vol}(g)\\
&= C^p e^{p(W_k + \varepsilon)t} \int_G \sum _I \vert f_I (h) \vert^p \mathrm{Jac}( \varphi _t^{-1}) (h) ~d \mathrm{vol} (h)\\
&= C^p e^{p(W_k + \varepsilon - \frac{h}{p})t} \int_G \sum _I \vert f_I (h) \vert^p ~d \mathrm{vol} (h)\\
&\leqslant C ^p D^p e^{p(W_k + \varepsilon - \frac{h}{p})t}  \Vert \omega \Vert _{L^p \Omega^k}^p,
\end{align*}
since the Jacobian of $\varphi_t$ is $e^{ht}$.
Finally, by letting $E := CD$, one obtains that 
$$\Vert \varphi _t^*\Vert _{L^p \Omega^k \to L^p \Omega^k} \leqslant E e^{(W_k + \varepsilon - \frac{h}{p})t}.$$

Suppose that $k \geqslant 1$ and $1<\frac{h}{W_k}$. 
Then, just from their definitions, one sees that $0 \leqslant W_{k-1}\leqslant W_k < h$ (recall that $h>0$ by assumption). This implies that
$W_k + \varepsilon - \frac{h}{p} <0$ for $1<p < \frac{h}{W_k}$ and $\varepsilon >0$ small enough; and that the same holds with $W_{k-1}$ instead of $W_k$.
We conclude then by using Propositions \ref{vanishing-proposition} and \ref{current-proposition}. 

Suppose that $k =0$. Then $W_0 + \varepsilon - \frac{h}{p} = \varepsilon - \frac{h}{p}<0 $ for $\varepsilon >0$ small enough. Thus we can conclude as well by using Propositions \ref{vanishing-proposition} and \ref{current-proposition}.

(3) To prove non-vanishing of the cohomology by using Propositions \ref{current-proposition} and \ref{current-proposition2}, we need in addition
to control the norm of 
$$\varphi^* _{-t}: L^p \Omega^{k+1}(G) \cap \Ker \iota _\xi
\to L^p \Omega^{k+1}(G) \cap \Ker \iota _\xi,$$ 
for $t\geqslant 0$.

The group $(\varphi_{-t})_{t \in{\bf R}}$ is the flow of the vector field $-\xi$. The real parts of the eigenvalues of $\mathrm{ad}\xi$ are
$-\lambda _D \leqslant -\lambda _{D-1} \leqslant \dots \leqslant -\lambda _1$. The sum of the $(k+2)$ last ones is $-w_{k+1}$; and the trace of 
$\mathrm{ad}\xi$ is $-h$. We notice that $\xi$ is an eigenvector of eigenvalue $0$. By assumption one has $w_{k+1} >0$; thus the eigenvalue
$0$ appears in the list $\lambda _1 \leqslant \dots \leqslant \lambda _{k+2}$, whose sum is $w_{k+1}$. Therefore $-w_{k+1}$ 
is largest real part of the eigenvalues of $(\mathrm{ad}\xi)^*$ acting on $(\Lambda^{k+1} \frak g^*) \cap \Ker \iota _\xi$.

By the same argument as in the first part of the proof, one obtains that for every $\varepsilon >0$ there is a constant $E'=E'(\varepsilon, k) >0$, such that for every $t \geqslant 0$:
$$\Vert \varphi^* _{-t} \Vert _{L^p \Omega^{k+1} \cap \Ker \iota _\xi \to L^p \Omega^{k+1} \cap \Ker \iota _\xi} \leqslant E' e^{(-w_{k+1} + \epsilon +\frac{h}{p})t}.$$
For $p > \frac{h}{w_{k+1}} $ and $\varepsilon >0$ small enough, one has $-w_{k+1} + \epsilon +\frac{h}{p}<0$. 
We conclude by using Propositions \ref{current-proposition} and \ref{current-proposition2}.
\end{proof}

\begin{example}
Let $R = {\bf R} \ltimes_\alpha {\bf R}^{D-1}$ with $\alpha: {\bf R} \to \mathrm{GL}({\bf R}^{D-1})$ defined by 
$\alpha (t) = e^t \cdot \mathrm{id}$.
Then $R$ is isometric to the real hyperbolic space $\mathbb H^D _{\bf R}$. Consider the vector field $\frac{\partial}{\partial t}$. The 
linear map $\mathrm{ad}(\frac{\partial}{\partial t})$ is the projection on the ${\bf R}^{D-1}$ factor. 
Put $\xi = -\frac{\partial}{\partial t}$. The list of the eigenvalues of 
$(-\mathrm{ad}\xi)$ is $0= \lambda _1 < \lambda _2 = \dots = \lambda _D =1$. For every 
$k \in \{0, \dots , D-1\}$
one has $w_k = W_k = k$. Moreover $h = D-1$. Therefore the above proposition applied in combination with Poincar\'e duality (Proposition \ref{poincare-proposition}), shows that for $k\in \{1, \dots , D-1\}$: 
\begin{itemize}
\item $L^p \mathrm{H}^k _{\mathrm{dR}}(R) =\{0\}$ for $1< p<\frac{D-1}{k}$ or $p>\frac{D-1}{k-1}$. 
\item If $\frac{D-1}{k}<p <\frac{D-1}{k-1}$, then $L^p \mathrm{H}^{k} _{\mathrm{dR}}(R) \neq \{0\}$ and is Banach isomorphic to 
$\mathcal Z^{p,k}(M, \xi)$.
\end{itemize}
Moreover, by Subsection \ref{example-complement}.1, one has $L^p \mathrm{H}^0 _{\mathrm{dR}}(R) = L^p \mathrm{H}^D _{\mathrm{dR}}(R) =\{0\}$ for every $p \in (1, +\infty)$.
\end{example}

Apart for the above example, and few other ones, the $L^p$-cohomology is only partially known.
In the sequel we will use the following consequences of Proposition \ref{lie-proposition}.

\begin{corollary} 
\label{lie-corollary1} 
Suppose that $R$ is a connected Lie group of the form $R = A \ltimes N$, with $A \simeq {\bf R}^l$ and $l \geqslant 1$, that satisfies the contraction condition {\rm (nC)} of the introduction (for a given $\xi$ in the Lie algebra of $A$). Let $D =\dim(R)$.
Then, with the notations of Proposition \ref{lie-proposition}, the numbers $w_{l}$, $W_{D-1-l}$ belongs to $(0, h)$, and we have:
\begin{itemize}
\item If $1<p<\frac{h}{W_{D-1-l}}$, then $L^p \mathrm{H}^k _{\mathrm{dR}}(R) = \{0\}$ for all $k < D-l$.
\item Under the same assumption, the space $L^p \mathrm{H}^{D-l} _{\mathrm{dR}}(R)$ is Hausdorff and Banach isomorphic to $\mathcal Z^{p, D-l}(R, \xi)$. 
\item If $p > \frac{h}{w_{l}}$, then $L^p \mathrm{H}^k _{\mathrm{dR}}(R) =\{0\}$ for all $k > l$ .
\end{itemize}
\end{corollary}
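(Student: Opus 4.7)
I would deduce the three assertions directly from Proposition~\ref{lie-proposition}, together with Poincar\'e duality (Proposition~\ref{poincare-proposition}) for the last one.

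The preliminary step is to read off the spectrum of $(-\mathrm{ad}\xi) \in \mathrm{End}(\frak g)$ for the element $\xi \in \frak a$ furnished by (nC). Since $A$ is abelian, $\mathrm{ad}\xi$ vanishes on $\frak a$, contributing $l$ zero eigenvalues; the remaining $D-l$ eigenvalues come from $\mathrm{ad}\xi\restr_{\frak n}$ and all have strictly positive real parts by (nC). In the ordering of Proposition~\ref{lie-proposition} this reads
\[
\lambda_1 = \cdots = \lambda_l = 0 < \lambda_{l+1} \leqslant \cdots \leqslant \lambda_D,
\]
so $h = \sum_{i=l+1}^D \lambda_i > 0$, $w_l = \lambda_{l+1} \in (0,h)$ and $W_{D-1-l} = h - w_l$, which lies in $(0,h)$ as soon as $D \geqslant l+2$; the boundary case $D = l+1$ is absorbed by the convention $W_0 = 0$, $h/W_0 = +\infty$. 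Moreover, since $A \simeq {\bf R}^l$, the map $t \mapsto \exp(t\xi)$ is a proper closed embedding ${\bf R} \hookrightarrow R$, hence the right-translation flow of $\xi$ acts properly on $R$, which is the hypothesis required by Proposition~\ref{lie-proposition}.

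For the first two items, assume $1 < p < h/W_{D-1-l}$. For each integer $k$ with $0 \leqslant k < D-l$, monotonicity of $k \mapsto W_k$ yields $W_k \leqslant W_{D-1-l}$, hence $p < h/W_k$; items~(1) and~(2) of Proposition~\ref{lie-proposition} then simultaneously provide the vanishings $L^p\mathrm{H}^k_{\mathrm{dR}}(R) = \{0\}$ for all such $k$, and, taking $k = D-l-1$ in item~(2), the Hausdorffness of $L^p\mathrm{H}^{D-l}_{\mathrm{dR}}(R)$ together with its Banach identification with $\mathcal Z^{p,D-l}(R,\xi)$.

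The third item I would obtain from the first two by Poincar\'e duality. Set $q = p/(p-1)$; a short computation using $W_{D-1-l} = h - w_l$ shows that $p > h/w_l$ is equivalent to $q < h/W_{D-1-l}$. Applying the first two items to the exponent $q$ then gives $L^q\mathrm{H}^j_{\mathrm{dR}}(R) = \{0\}$ for every $j < D-l$ and Hausdorffness of $L^q\mathrm{H}^{D-l}_{\mathrm{dR}}(R)$. Both facets of Proposition~\ref{poincare-proposition} now transfer this to the exponent $p$: its first bullet yields Hausdorffness of $L^p\mathrm{H}^m_{\mathrm{dR}}(R)$ for every $m \in \{l+1,\dots,D\}$, while its second bullet, applied to the vanishing of $L^q\overline{\mathrm{H}^j_{\mathrm{dR}}}(R)$ for $j < D-l$, gives $L^p\overline{\mathrm{H}^m_{\mathrm{dR}}}(R) = \{0\}$ for every $m > l$. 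Since a Hausdorff topological vector space coincides with its largest Hausdorff quotient, combining the two facts forces $L^p\mathrm{H}^m_{\mathrm{dR}}(R) = \{0\}$ for all $m > l$ (the case $m > D$ being trivial). I do not anticipate a serious obstacle: the argument is essentially book-keeping on top of Proposition~\ref{lie-proposition}. The only delicate point is in the third step, where the two facets of Poincar\'e duality must be combined precisely in order to promote the reduced vanishing dual to item~(1) into the \emph{unreduced} vanishing demanded by the corollary.
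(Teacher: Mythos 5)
Your proposal is correct and follows the paper's own route: read off the spectrum of $-\mathrm{ad}\,\xi$ from (nC), apply Proposition \ref{lie-proposition} for the first two items, and obtain the third by Poincar\'e duality (Proposition \ref{poincare-proposition}) using the relation $W_{D-1-l} + w_l = h$ to identify $h/w_l$ as the H\"older conjugate of $h/W_{D-1-l}$. The paper's proof is terser but identical in substance; your extra care in verifying properness of the flow and in combining the two facets of Poincar\'e duality (Hausdorffness plus reduced vanishing yields unreduced vanishing) only fills in details the paper leaves implicit.
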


\begin{proof} By condition {\rm (nC)}, the list of the real parts of the eigenvalues of $(-\mathrm{ad}\xi)$ is of the form
$$0=\lambda _1 = \dots = \lambda _l < \lambda _{l+1} \leqslant \dots \leqslant \lambda _D.$$
Thus, one has $0< W_k \leqslant W_{D-1-l} <h$ for every $k <D-l$. 
By Proposition \ref{lie-proposition} the first two items follow. The last one is a consequence of the previous ones in combination with Poincar\'e duality (Proposition \ref{poincare-proposition}). It holds 
for $p$ larger than the H\"older conjugate of 
$\frac{h}{W_{D-l-1}}$, \emph{i.e.}~ for $p> \frac{h}{w_{l}}$
(since $W_{D-l-1} + w_{l}=h$). Note that $w_l = \lambda_{l+1}$.
\end{proof}

In the special case $l=1$, one gets in addition:

\begin{corollary} 
\label{lie-corollary2} 
Let $R$ be as in the previous corollary, and assume in addition that $l=1$.
If $1<p < \frac{h}{W_{D-2}}$, then $L^p \mathrm{H}^{D-1} _{\mathrm{dR}}(R)$ is non-trivial. 
\end{corollary}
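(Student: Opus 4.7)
The plan is to apply item (3) of Proposition \ref{lie-proposition} with $k = D-2$ to the vector $\xi$ provided by condition (nC), after first rescaling it to be a unit vector for the left-invariant metric. This rescaling is harmless because it multiplies every $\lambda_i$ by the same positive factor, so the ratios $h/w_{*}$ and $h/W_{*}$ are unchanged.

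First I would verify the standing hypothesis of Proposition \ref{lie-proposition}, namely that the flow $\varphi_t = R_{\exp t\xi}$ acts properly on $R$. Since $\xi$ lies in the Lie algebra of $A$, the projection $\pi \colon R = A \ltimes N \to A$ is a Lie group homomorphism, and it sends the $\varphi_t$-orbit of a point $g$ to the translate $\pi(g) \exp(t\xi)$. As $A \simeq \mathbf{R}$, translation is a proper $\mathbf{R}$-action on $A$; combining this with the fact that $N$-fibers of $\pi$ are $\varphi_t$-invariant in the sense needed, properness lifts to $R$.

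Next comes the arithmetic observation which is the whole point of the corollary. Under (nC) and the assumption $l=1$, the list of real parts of the eigenvalues of $-\mathrm{ad}\,\xi$ on $\mathfrak{g} = \mathfrak{a} \oplus \mathfrak{n}$ reads exactly $0 = \lambda_1 < \lambda_2 \leqslant \dots \leqslant \lambda_D$, because $\mathrm{ad}\,\xi$ vanishes on the one-dimensional subalgebra $\mathfrak{a} = \mathbf{R}\xi$ and is contracting on $\mathfrak{n}$. Therefore $w_{D-1} = \sum_{i=1}^{D} \lambda_i = h$, whence $h/w_{D-1} = 1$. In particular $h > 0$, so the trace hypothesis of Proposition \ref{lie-proposition} is satisfied.

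Item (3) of Proposition \ref{lie-proposition}, applied with $k = D-2$, then asserts that $L^p \mathrm{H}^{D-1}_{\mathrm{dR}}(R) \neq \{0\}$ whenever $1 \leqslant h/w_{D-1} < p < h/W_{D-2}$. Substituting $h/w_{D-1} = 1$, this interval becomes exactly $1 < p < h/W_{D-2}$, which is the claim. I do not anticipate any real obstacle: the content of the corollary lies entirely in the observation that when $l = 1$ the lower endpoint of the non-vanishing range collapses to $1$, which is precisely what makes Proposition \ref{lie-proposition}(3) directly applicable throughout the full interval $(1, h/W_{D-2})$.
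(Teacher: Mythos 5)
Your proof is correct and follows essentially the same route as the paper: apply Proposition \ref{lie-proposition}(3) with $k=D-2$ and observe that, since $l=1$ forces $\lambda_1=0$ to be the only zero in the list, one gets $w_{D-1}=\sum_{i=1}^{D}\lambda_i=h$, so the lower endpoint $h/w_{D-1}$ of the non-vanishing interval collapses to $1$. Your explicit verification of properness of the flow and the identity $w_{D-1}=h$ (the paper writes the slightly looser chain $0<W_{D-2}<w_{D-1}<h$, where the last inequality should be an equality) are welcome but do not change the argument.
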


\begin{proof} With the notations of Proposition \ref{lie-proposition}, we have: $0< W_{D-2} <w_{D-1} <h$. Thus the result follows from Proposition \ref{lie-proposition}, applied with $k= D-2$.
\end{proof}

\section{A non-vanishing criterion}
\label{s - non-vanishing crit} 

Let $R$ be a connected Lie group of the form $R = A \ltimes N$ with $A \simeq {\bf R}^l$ and $l \geqslant 1$, that satisfies conditions (nC) and (nT) of the introduction. 
As explained in Subsection 
\ref{introduction-strategy}, it decomposes as $R = B \ltimes H$ and the analysis of 
$L^p\overline{\mathrm{H}_{\mathrm{dR}}^l}(R)$ -- for large $p$ and $l = \dim(A)$ -- reduces to the study of the action of $B$
on $L^p\mathrm {H}_{\mathrm{dR}}^{D-l} (H)$ for $p$ close to $1$. The goal of this section is to give a sufficient condition
on this action (Proposition \ref{non-vanishing-proposition}), to ensure non-vanishing of 
$L^p\overline{\mathrm{H}_{\mathrm{dR}}^l}(R)$ for large $p$ (Corollary \ref{non-vanishing-corollary}). 

As an application we will prove Theorems \ref{introduction-theorem2}, \ref{introduction-theorem3}
and \ref{introduction-theorem4} in the next two Sections.

We keep the objects and notations of Subsection \ref{introduction-strategy}. Recall that the Lie algebra of $A$ is denoted by
$\frak a$, and that the Lie algebra of $B$ is
$\frak b =\{X \in \frak a: \mathrm{trace}(\mathrm{ad}X) = 0\}$. The group $H$ is defined by by $H = \{e^{t\xi}\}_{t \in \mathbf R} \ltimes N$, where $\xi \in \frak a$ is a vector that satisfies condition (nC). 

As in the statement of Proposition \ref{lie-proposition}, we consider the real parts of the eigenvalues of $(-\mathrm{ad}\xi)$ acting on the Lie of $R$. Since $\xi$ satisfies condition (nC), one has 
$$0=\lambda _1 = \dots = \lambda _l < \lambda _{l+1} \leqslant \dots \leqslant \lambda _D.$$
Recall that $w_k := \sum _{i=1}^{k+1} \lambda _i$ denotes
the sum of the $(k+1)$ first ones, and $W_k := \sum _{j=1}^k \lambda _{D-j+1}$ 
the sum of the $k$ last ones. The trace of $(-\mathrm{ad}\xi)$ is $h := \sum _{i=1}^D \lambda _i$. 

We also set $n:= D-l = \dim(N)$, and we denote by $\pi$ the projection $\pi: H \to N$ and by $\mathrm{vol}$ the Riemannian volume form on $N$. 

\subsection{Interpolation} Our first result is a quantitative version of Proposition \ref{current-proposition2}. 
It shows that the norm $\Vert \cdot \Vert _{\Psi^{p,n}(H)}$ interpolates (in some sense) between 
$\Vert \cdot \Vert _{L^p \Omega^n (N)}$ and $\Vert \cdot \Vert _{L^p \Omega^{n-1} (N)}$.
\begin{lemma}\label{non-vanishing-lemma}
For every $1<p< \frac{h}{W_{n-1}}$, there exist constants
$a, b, C >0$, such that for every $\theta \in \Omega^{p, n-1} (N)$ 
the form $\pi^* (d\theta)$ belongs to $\mathcal Z^{p, n} (H, \xi)$, and satisfies:
$$\Vert \pi^*(d\theta) \Vert _{\Psi^{p, n}(H)} \leqslant C  \inf _{s \geqslant 0} \Bigl\{e^{-as} \Vert d\theta \Vert _{L^p \Omega^n(N)} + e^{bs} \Vert \theta \Vert _{L^p \Omega^{n-1}(N)}\Bigr\}.$$ 
\end{lemma}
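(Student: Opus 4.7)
The plan is to produce, for each $s \geq 0$, a decomposition $\pi^*(d\theta) = \beta_s + d\gamma_s$ with $\beta_s \in L^p\Omega^n(H)$ and $\gamma_s \in L^p\Omega^{n-1}(H)$, and then estimate both summands quantitatively. First observe that $\alpha := \pi^*\theta$ is a smooth $(n-1)$-form on $H$ that is $\varphi_t$-invariant (since $\pi \circ \varphi_t = \pi$), lies in $\ker \iota_\xi$, and satisfies $d\alpha = \pi^*(d\theta)$. Thus $\pi^*(d\theta)$ is closed and $\varphi_t$-invariant, so its membership in $\mathcal Z^{p,n}(H,\xi)$ will follow automatically once we know $\pi^*(d\theta) \in \Psi^{p,n}(H)$.

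Following the cutoff strategy of Proposition \ref{current-proposition2}, I would fix once and for all a smooth function $\chi \in C^\infty(\mathbf R, [0,1])$ with $\chi \equiv 0$ on $(-\infty, 0]$ and $\chi \equiv 1$ on $[1,+\infty)$, and for each $s \geq 0$ set $\chi_s(t) := \chi(t-s)$, where $t$ denotes the coordinate on the $\{e^{t\xi}\}$-factor in the identification $H \simeq \mathbf R \times N$. Put
\[
\beta_s := d(\chi_s \alpha) = d\chi_s \wedge \alpha + \chi_s \cdot d\alpha, \qquad \gamma_s := (1 - \chi_s)\alpha,
\]
so that $\pi^*(d\theta) = \beta_s + d\gamma_s$. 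The lemma then reduces to proving estimates of the form $\Vert \beta_s \Vert_{L^p\Omega^n(H)} \leq C e^{-as} \Vert d\theta \Vert_{L^p\Omega^n(N)}$ and $\Vert \gamma_s \Vert_{L^p\Omega^{n-1}(H)} \leq C e^{bs} \Vert \theta \Vert_{L^p\Omega^{n-1}(N)}$ for some $a, b > 0$.

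To derive these bounds I would re-run the pointwise computation of the proof of Proposition \ref{lie-proposition}, diagonalising $(-\mathrm{ad}\xi)|_{\mathfrak n}$ and using that the left-invariant volume on $H$ reads $e^{ht}\,dt \wedge dV_N$. The key observation is that since $\alpha$ and $d\alpha$ lie in $\ker \iota_\xi$, only the subspace $\Lambda^k \mathfrak n^* \subset \Lambda^k \mathfrak h^*$ contributes: the operator $(e^{t\,\mathrm{ad}\xi})^\vee$ acts on the one-dimensional $\Lambda^n \mathfrak n^*$ with eigenvalue $-h$, and on $\Lambda^{n-1}\mathfrak n^*$ with eigenvalues whose real parts lie in $[-(h-\lambda_{l+1}),\,-(h-\lambda_D)]$. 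Combining this with Fubini, the $\chi_s \cdot d\alpha$ part of $\beta_s$ is controlled by
\[
\Vert \chi_s \cdot d\alpha \Vert^p_{L^p\Omega^n(H)} \leq C \int_s^{+\infty} e^{(1-p)ht}\,dt \cdot \Vert d\theta \Vert^p_{L^p\Omega^n(N)},
\]
yielding decay at rate $a = h(p-1)/p > 0$ as soon as $p > 1$. The analogous computation for $\gamma_s$, after splitting the $t$-integral at $0$, yields convergence at $-\infty$ precisely under the hypothesis $p < h/W_{n-1}$, while the $[0, s+1]$ contribution grows as $e^{pbs}$ with $b = h/p - (h - \lambda_D) > 0$ (positivity guaranteed by $p < h/W_{n-1} \leq h/(h - \lambda_D)$). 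The small boundary term $d\chi_s \wedge \alpha$, supported on $\{s \leq t \leq s+1\}$, is controlled by the same pointwise estimates and absorbed into $C e^{bs} \Vert \theta \Vert$ up to enlarging $b$ if necessary.

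The main obstacle is the careful spectral bookkeeping: isolating the correct eigenvalue of $(-\mathrm{ad}\xi)^\vee$ on $\Lambda^k\mathfrak n^*$ in each regime of $t$, and verifying that the single assumption $1 < p < h/W_{n-1}$ is what simultaneously ensures integrability over $(-\infty, 0]$ in the $\gamma_s$ estimate and positivity of $b$. Summing the two bounds and taking the infimum over $s \geq 0$ yields the inequality asserted in the statement.
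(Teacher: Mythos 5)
Your proposal follows essentially the same route as the paper: the same cutoff decomposition $\pi^*(d\theta)=d(\chi_s\alpha)+d\big((1-\chi_s)\alpha\big)$, the same use of the $\varphi_t^*$-invariance of $\alpha$ and $d\alpha$, and the same two exponential operator-norm estimates (decay of $\varphi_t^*$ on $L^p\Omega^{n-1}$ for $t\geqslant 0$, which is where $p<h/W_{n-1}$ enters, and decay of $\varphi_{-t}^*$ on $L^p\Omega^n\cap\Ker\iota_\xi$); the only difference is that you re-derive these estimates from the spectral data of $\mathrm{ad}\,\xi$, whereas the paper quotes them from the proof of Corollary \ref{lie-corollary2}. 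This is correct and matches the paper's argument.
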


\begin{proof} Recall that the flow of $\xi$ in $H$ is denoted by $\varphi _t$. 
It is simply a translation along the ${\bf R}$ factor of $H$.
From Corollary \ref{lie-corollary2} (and its proof), for every $1<p< \frac{h}{W_{n-1}}$ there exist $\eta >0$ and $C_1> 0$ such that for $t \geqslant 0$:
\begin{enumerate}
\item $\Vert \varphi^* _t \Vert _{L^p \Omega^{n-1} \to L^p \Omega^{n-1}} \leqslant C_1 e^{-\eta t}$,
\item $\Vert \varphi^* _{-t} \Vert _{L^p \Omega^{n} \cap \Ker \iota _\xi \to L^p \Omega^{n} \cap \Ker \iota _\xi} 
\leqslant C_1 e^{-\eta t}$.
\end{enumerate}

For $s \in {\bf R}$, let $\chi_s \in C^\infty (H)$ which depends only on the first variable, and such that $\chi_s (t) = 0$ for $t\leqslant s$, and 
$\chi_s (t)=1$ for $t\geqslant s+1$. 
Let $\alpha = \pi^* (\theta)$. Then $\alpha$ and $d\alpha$ are $\varphi _t^*$-invariant. 
One has 
$$\pi^* (d\theta) = d\alpha = d(\chi_s \cdot \alpha + (1-\chi _s)\cdot \alpha) = \beta + d\gamma,$$
with $\beta = d(\chi_s \cdot \alpha) = d\chi_s \wedge \alpha + \chi_s \cdot d \alpha$, and $\gamma = (1- \chi_s)\cdot \alpha$.
Thus $\Vert \pi^* (d\theta) \Vert _{\Psi^{p,n}(H)} \leqslant \Vert \beta \Vert _{L^p \Omega^n (H)} + \Vert \gamma \Vert _{L^p \Omega^{n-1} (H)}$.

The form $d\alpha$ belongs to $\Ker \iota _\xi$ and is $\varphi _t^*$-invariant. With the above property (2) 
we obtain (since $s \geqslant 0$):
\begin{align*}
\Vert \chi_s \cdot d\alpha \Vert _{L^p \Omega^n (H)} 
&\leqslant \Vert (d\alpha)\cdot \mathbf{1}_{t\geqslant s} \Vert _{L^p \Omega^n (H)}\\
&= \sum _{i=0}^\infty  \Vert (d\alpha)\cdot \mathbf{1}_{t\in [s+i, s+i+1]} \Vert _{L^p \Omega^n (H)}\\
&\leqslant C_1 \sum _{i=0}^\infty e^{-\eta (s+i)}\Vert (d\alpha)\cdot \mathbf{1}_{t\in [0,1]} \Vert _{L^p \Omega^n (H)}\\
&= \frac{C_1 e^{-\eta s}}{1 - e^{-\eta}} \Vert (d\alpha)\cdot \mathbf{1}_{t\in [0,1]} \Vert _{L^p \Omega^n (H)}.
\end{align*}
Since the Haar measure on $H$ is the product of those on $\mathbf R$ and $N$ times $e^{-th}$, the $L^p$-norms of 
$(d\alpha)\cdot \mathbf{1}_{t\in [0,1]} $ and of $d\theta$ are comparable.
Therefore there exists a constant $C_2$, depending only on $C_1$, $\eta$ and $H$, such that  
$$\Vert \chi_s \cdot d\alpha \Vert _{L^p \Omega^n (H)} \leqslant C_2 e^{-\eta s} \Vert d\theta \Vert _{L^p \Omega^n(N)}.$$

It remains to bound by above the $L^p$-norms of $d\chi_s \wedge \alpha$ and $\gamma$.
One has $d \chi _s = \chi_s'(t)dt$, with $\chi_s'$ supported on $[s,s+1]$. Thus
$$\Vert d\chi_s \wedge \alpha \Vert_{L^p \Omega^n (H)} \leqslant \Vert \chi_s' \cdot \alpha \Vert _{L^p \Omega^{n-1} (H)} 
\leqslant C_3 \Vert \alpha \cdot  \mathbf{1}_{t\in [0,s+1]}\Vert _{L^p \Omega^{n-1} (H)},$$
with $C_3 = \Vert \chi_s' \Vert _\infty$.

Similarly, by writing  
$\gamma = \alpha \cdot \mathbf 1 _{t \leqslant 0} + (1- \chi_s) \cdot \alpha \cdot \mathbf 1 _{t \in [0, s+1]}$,
one has 
$$\Vert \gamma \Vert _{L^p \Omega^{n-1} (H)} \leqslant \Vert \alpha \cdot  \mathbf{1}_{t \leqslant 0}\Vert _{L^p \Omega^{n-1} (H)}
+ \Vert \alpha \cdot  \mathbf{1}_{t\in [0,s+1]}\Vert _{L^p \Omega^{n-1} (H)}.$$
The form $\alpha$ is $\varphi _t^*$-invariant. By using property (1) and a similar argument as above, one obtains that the $L^p$-norm of $\alpha \cdot \mathbf 1 _{t \leqslant 0}$ is control by above by a constant times 
$\Vert \theta \Vert _{L^p \Omega^{n-1}(N)}$.

Since $\varphi _t$ increases the norms at most exponentially, there exists $b >0$ and $C_4 >0$ such that for $t\geqslant 0$ one has
$$\Vert \varphi _{-t}^* \Vert _{L^p \Omega^{n-1} \to L^p \Omega^{n-1}} \leqslant C_4 e^{b t}.$$
Thus the $L^p$-norm of $\alpha \cdot \mathbf 1 _{t \in [0, s+1]}$ is bounded by above by a constant times 
$e^{b s} \Vert \theta \Vert _{L^p \Omega^{n-1}}$.
The statement follows with $a = \eta$.
\end{proof}

\subsection{The criterion} 
\label{ss - criterium} 
We now state the criterion that will lead to non-vanishing of cohomology. 
It requires some preparations.

Since $\frak a$ is abelian, 
there is a basis of $\frak n \otimes \mathbf C$ relative to which
the matrices of $\mathrm{ad}X \restr _{\frak n}$ ($X \in \frak a$) are upper triangular \cite[4.1. \!Corollary A]{Humphreys-LA}. 
For $i \in \{1, \dots , n\}$, denote by $\varpi_i$ the $i$-th diagonal coefficient; it is a linear form on $A$ with real or complex values. 
Since we assume that condition (nT) holds, we have: 
$$\bigcap _{i =1}^n \Ker \Re (\varpi_i) = \{0\}.$$
By definition of the factor $\frak b < \frak a$, one has $\mathrm{trace}( \mathrm{ad} X) =0$ for every $X \in \frak b$;
and thus $\sum _{i=1}^n \Re (\varpi_i (X)) =0$.
In combination with the above trivial intersection, this implies that for every $X \in \frak b \setminus \{0\}$,
there exists $i \in \{1, \dots , n\}$
such that $\Re (\varpi_i (X)) >0$.

\begin{proposition}
\label{non-vanishing-proposition} 
Suppose that we are given:
\begin{enumerate}
\item A non-empty subset $J \subset \{1, \dots , n\}$, such that for every vector $X \in \frak b \setminus \{0\}$ there exists $j \in J$ 
with $\Re (\varpi_j (X)) >0$.
\item To every $j \in J$, a vector $Z_j \in \frak n$ or $\frak n \otimes \mathbf C$ depending whether $\varpi_j$ takes real or complex values,
such that $\mathrm{ad}(X)\cdot Z_j = \varpi _j (X) Z_j$ for every $X \in \frak b$. 
\item A non-zero $C^1$ function $f: N \to {\bf R}$ with compact support, whose integral along every orbit
of the left-invariant vector fields $Y_j:= \Re Z_j$ $(j \in J)$ is null.
\end{enumerate}
Then for $1<p <\frac{h}{W_{n-1}}$, the form $\psi = \pi^*(f\mathrm{vol})$ belongs to $\mathcal Z^{p, n} (H, \xi)$,
and satisfies:
$$\int _{X \in \frak b} \bigl\Vert C^* _{\exp X} (\psi) \bigr\Vert _{\Psi^{p,n}(H)}^p ~dX <+\infty.$$
\end{proposition}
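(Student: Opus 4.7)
The plan is to realize $\psi$ as $\pi^{*}(d\theta)$ for a suitable compactly supported primitive $\theta$ on $N$, then to track how this primitive transforms under the conjugation action of $B$.

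\emph{Stage 1 (primitive).} Fix any $j \in J$ and let $\phi_j^t$ be the flow of $Y_j$ on $N$. The $C^1$ function
\[
F_j(x) := \int_{-\infty}^0 f(\phi_j^t(x))\,dt
\]
satisfies $Y_j F_j = f$; by hypothesis (3) it also equals $-\int_0^{\infty} f(\phi_j^t(x))\,dt$, so along each $Y_j$-orbit $F_j$ vanishes outside the orbit segment meeting $\mathrm{supp}(f)$. Since $N$ is simply connected nilpotent, $Y_j$-orbits are properly embedded lines, so the interval along which any orbit meets $\mathrm{supp}(f)$ is uniformly bounded; hence $F_j$ is compactly supported. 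Because $N$ is nilpotent, $Y_j$ is divergence-free, and Cartan's formula gives $d(F_j\,\iota_{Y_j}\mathrm{vol}) = f\cdot\mathrm{vol}$. Setting $\theta_j := F_j\,\iota_{Y_j}\mathrm{vol}\in\Omega^{p,n-1}(N)$, Lemma \ref{non-vanishing-lemma} yields $\psi = \pi^{*}(d\theta_j)\in \mathcal Z^{p,n}(H,\xi)$.

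\emph{Stage 2 (conjugation).} Since $X\in\frak b$ commutes with $\xi$ and normalizes $N$, the conjugation $C_{\exp X}$ acts trivially on the $\{e^{t\xi}\}$-factor and by the group automorphism $C^N_{\exp X}$ on $N$; in particular $C^{*}_{\exp X}\psi = \pi^{*}(d\theta_{j,X})$ with $\theta_{j,X} := C^{N*}_{\exp X}\theta_j$. Because $\mathrm{tr}(\mathrm{ad}\,X|_{\frak n})=0$ for $X\in\frak b$, the diffeomorphism $C^N_{\exp X}$ preserves Haar measure on $N$; hence $\|d\theta_{j,X}\|_{L^p\Omega^n(N)}=\|f\|_{L^p(N)}$ is independent of $X$. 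Moreover
\[
C^{N*}_{\exp X}(\iota_{Y_j}\mathrm{vol}) = \iota_{(C^N_{\exp -X})_{*}Y_j}\,\mathrm{vol},
\]
and $(C^N_{\exp -X})_{*}Y_j$ is the left-invariant vector field with value $\Re(e^{-\varpi_j(X)}Z_j)$ at $e$, whose pointwise norm is bounded by a constant times $e^{-\Re\varpi_j(X)}$ (both in the real and complex cases for $Z_j$). Combined with $\|F_j\circ C^N_{\exp X}\|_{L^p(N)} = \|F_j\|_{L^p(N)}$, this gives a constant $c_j>0$ with
\[
\|\theta_{j,X}\|_{L^p\Omega^{n-1}(N)} \leq c_j\,e^{-\Re\varpi_j(X)}.
\]

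\emph{Stage 3 (optimization and integration).} Applying Lemma \ref{non-vanishing-lemma} to $\theta_{j,X}$ provides constants $a,b,C>0$ independent of $X$ and $j$ such that
\[
\|C^{*}_{\exp X}\psi\|_{\Psi^{p,n}(H)} \leq C\inf_{s\geq 0}\bigl\{e^{-as}\|f\|_{L^p} + e^{bs}c_j\,e^{-\Re\varpi_j(X)}\bigr\}.
\]
For each $X\in\frak b\setminus\{0\}$, hypothesis (1) lets us choose $j=j(X)\in J$ realizing $\rho(X):=\max_{j\in J}\Re\varpi_j(X)>0$. Optimizing the infimum then yields $\|C^{*}_{\exp X}\psi\|_{\Psi^{p,n}(H)}\leq C'\,e^{-\delta\rho(X)}$ with $\delta = a/(a+b)\in(0,1)$, once $\rho(X)$ is large enough; on any compact neighborhood of $0$ a uniform bound holds by continuity. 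Since $\rho$ is continuous, positively homogeneous of degree $1$, and positive on the (compact) unit sphere of $\frak b$, we have $\rho(X)\geq c'\|X\|$ for some $c'>0$. Consequently
\[
\int_{\frak b}\|C^{*}_{\exp X}\psi\|_{\Psi^{p,n}(H)}^{p}\,dX \lesssim \int_{\frak b} e^{-\delta p c'\|X\|}\,dX < +\infty,
\]
which is exactly the asserted $L^p$-integrability.

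The main technical step is Stage 2: producing a primitive $\theta_{j,X}$ of $C^{N*}_{\exp X}(f\cdot\mathrm{vol})$ whose $L^p$-norm carries the decay factor $e^{-\Re\varpi_j(X)}$ while its differential remains uniformly bounded. This relies precisely on $Z_j$ being an honest eigenvector of $\mathrm{ad}(X)|_{\frak n}$ for every $X\in\frak b$ and on the volume-preservation of $C^N_{\exp X}$ for $X\in\frak b$; the interpolation built into Lemma \ref{non-vanishing-lemma} then converts an $L^p$-decay on $\theta_{j,X}$ into a $\Psi^{p,n}$-decay on $C^{*}_{\exp X}\psi$, which is integrable over $\frak b$ once hypothesis (1) guarantees linear growth of $\rho$.
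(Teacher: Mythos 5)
Your proof is correct and follows essentially the same route as the paper's: you build the compactly supported primitive $\theta_j=F_j\,\iota_{Y_j}\mathrm{vol}$ with $F_j=\int_{-\infty}^0 f\circ\phi_j^t\,dt$, use volume-preservation of $C_{\exp X}$ on $N$ and the eigenvector property of $Z_j$ to get $\Vert C^{*}_{\exp X}\theta_j\Vert_{L^p\Omega^{n-1}}\lesssim e^{-\Re\varpi_j(X)}$ while $\Vert C^{*}_{\exp X}(f\mathrm{vol})\Vert_{L^p\Omega^{n}}$ stays constant, and then feed this into the interpolation of Lemma \ref{non-vanishing-lemma} together with hypothesis (1) and compactness of the unit sphere of $\frak b$ to obtain exponential decay in $\vert X\vert$. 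Your Stage 3 merely makes explicit the optimization over $s$ that the paper leaves implicit; there is no substantive difference.
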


\begin{remark} 
\label{rk - non-vanishing condition} 
Note that assumption (2) holds for any $J$, since the $w_i$'s are also the weights of the representation $\mathrm{ad}: A \to \mathrm{End}(\frak n \otimes \mathbf C)$.
Moreover, as explained above, assumption (1) is valid with the maximal choice $J = \{1, \dots, n\}$.
In general, condition (1) leads to choosing big subsets $J$ of $\{1, \dots , n\}$ but this balanced by condition (3). 
The latter assumption is \textit{not} a straightforward consequence of our previous hypothesis and the main part of the next section will be dedicated to sufficient conditions for it. 
\end{remark}

\begin{corollary}
\label{non-vanishing-corollary} 
Suppose that the conditions \emph{(1), (2), (3)} above are satisfied. 
Then  
$L^p\overline{\mathrm{H}_{\mathrm{dR}}^l}(R) \neq \{0\}$ for $p >\frac{h}{w_l}$.
\end{corollary}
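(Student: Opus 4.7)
The plan is to deduce non-vanishing of $L^p\overline{\mathrm{H}_{\mathrm{dR}}^l}(R)$ from Proposition \ref{non-vanishing-proposition} by combining three ingredients: Poincar\'e duality, the spectral sequence decomposition associated to the splitting $R = B \ltimes H$ (Corollary \ref{coho-corollary}, advertised in Subsection \ref{introduction-strategy}), and the description of the $L^q$-cohomology of $H$ given by Corollary \ref{lie-corollary1}.

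First set $q = p/(p-1)$. Since $w_l + W_{n-1} = h$ (the full trace of $-\mathrm{ad}\,\xi$: $w_l$ collects the $l+1$ smallest real parts of eigenvalues and $W_{n-1}$ the remaining $n-1 = D-l-1$ largest ones), the hypothesis $p > h/w_l$ is equivalent to $1 < q < h/W_{n-1}$. By Poincar\'e duality (Proposition \ref{poincare-proposition}) it suffices to exhibit a non-zero element of $L^q \overline{\mathrm{H}_{\mathrm{dR}}^{D-l}}(R)$. The spectral sequence isomorphism at exponent $q$ gives
$$L^q \mathrm{H}_{\mathrm{dR}}^{D-l}(R) \;\simeq\; \Bigl\{\psi \in L^q \mathrm{H}_{\mathrm{dR}}^{D-l}(H) : \int_B \bigl\Vert C_b^*(\psi) \bigr\Vert^q\, db < +\infty \Bigr\},$$
and Corollary \ref{lie-corollary1} applied to the rank-one group $H$ (which inherits the same values of $h$ and $W_{n-1}$ as $R$, the extra $\frak b$-directions contributing only zero eigenvalues) identifies $L^q \mathrm{H}_{\mathrm{dR}}^{D-l}(H)$ as a Banach space with $\mathcal{Z}^{q, n}(H, \xi)$. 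In particular this space is Hausdorff, so the right hand side above is Hausdorff too, and the norm inside the integral may be read as $\Vert \cdot \Vert_{\Psi^{q, n}(H)}$.

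To conclude, Proposition \ref{non-vanishing-proposition} produces under the hypotheses (1), (2), (3) a non-zero class $\psi = \pi^*(f\,\mathrm{vol}) \in \mathcal{Z}^{q, n}(H, \xi)$ satisfying $\int_{\frak b} \Vert C^*_{\exp X}(\psi) \Vert^q_{\Psi^{q, n}(H)}\, dX < +\infty$; since $B$ is abelian, the exponential map $\frak b \to B$ is a Lie group isomorphism with constant Jacobian, so this integral is comparable to the one over $B$. Hence $\psi$ determines a non-zero class in $L^q \mathrm{H}_{\mathrm{dR}}^{D-l}(R)$, which is Hausdorff by the previous paragraph; Poincar\'e duality then yields $L^p \overline{\mathrm{H}_{\mathrm{dR}}^l}(R) \neq \{0\}$. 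The bulk of the work is already contained in Proposition \ref{non-vanishing-proposition}; the only things to keep track of here are the Hölder-dual ranges for $p$ and $q$ and the Hausdorff identification of $L^q \mathrm{H}_{\mathrm{dR}}^{D-l}(H)$ with $\mathcal{Z}^{q,n}(H,\xi)$, so that the norm in the spectral sequence formula becomes concretely computable.
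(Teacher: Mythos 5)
Your argument is correct and follows the paper's proof essentially verbatim: Corollary \ref{lie-corollary1} applied to $H$, the spectral-sequence isomorphism of Corollary \ref{coho-corollary}, Proposition \ref{non-vanishing-proposition}, and Poincar\'e duality via the H\"older conjugacy $w_l + W_{n-1} = h$. One small correction: since the isomorphism (\ref{coho-semidirect2}) is only known to be a \emph{linear} isomorphism (see the Remark following Corollary \ref{coho-corollary}), you cannot transport the Hausdorff property of $\mathcal Z^{q,n}(H,\xi)$ across it to $L^q\mathrm{H}^{D-l}_{\mathrm{dR}}(R)$; instead, Hausdorffness of $L^q\mathrm{H}^{D-l}_{\mathrm{dR}}(R)$ should be taken directly from the second item of Corollary \ref{lie-corollary1} applied to $R$ itself, which is what the paper does.
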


\begin{proof} Let $1<p<\frac{h}{W_{n-1}}$. According to Corollary \ref{lie-corollary1} applied with $R=H$, the space $L^p\mathrm {H}_{\mathrm{dR}}^{k} (H)$ vanishes for $k<n$, and is Hausdorff and Banach isomorphic to $\mathcal Z^{p, n} (H, \xi)$ when $k=n$. As explained in the sketch of proof in Subsection \ref{introduction-strategy}, these properties in combination with a spectral sequence argument taken from \cite{BR} -- see Corollary \ref{coho-corollary}, imply that there exists a linear isomorphism:
$$
L^p \mathrm{H} _{\mathrm{dR}}^{D-l}(R) \simeq \Bigl\{\psi \in \mathcal Z^{p, n} (H, \xi): \int _{X \in \frak b} \bigl\Vert C^* _{\exp X} (\psi) \bigr\Vert _{\Psi^{p,n}(H)}^p ~dX < +\infty \Bigr\}.
$$
Thus by Proposition \ref{non-vanishing-proposition}, the space $L^p\mathrm {H}_{\mathrm{dR}}^{D-l}(R)$ is non-trivial. Moreover it Hausdorff (see Corollary \ref{lie-corollary1}). By Poincar\'e duality (Proposition \ref{poincare-proposition}) we obtain that $L^p\overline{\mathrm{H}_{\mathrm{dR}}^l}(R) \neq \{0\}$ for $p >\frac{h}{w_l}$; indeed $\frac{h}{w_l}$ and $\frac{h}{W_{n-1}}$ are H\"older conjugated (since $w_l + W_{n-1} = h$).
\end{proof}

\begin{proof}[Proof of Proposition \ref{non-vanishing-proposition}]
Let $\phi^t _j:= R_{\exp tY_j}$ be the flow of $Y_j$, and set $\omega:= f \mathrm{vol}$ so that $\psi = \pi^* (\omega)$. 
Then for every $j \in J$ the form $\theta _j:= \int _{-\infty}^0 (\phi^t _j)^* (\iota _{Y_j} \omega)~ dt$ is a primitive of $\omega$. Indeed 
by applying the Cartan formula $\mathcal L _{Y_j} = d \circ \iota _{Y_j} + \iota _{Y_j} \circ d$ to the compactly suported closed form $\omega$, one gets
$$d\theta _j = \int _{-\infty}^0 (\phi^t _j)^* (d\iota _{Y_j} \omega)~ dt =
\int _{-\infty}^0 (\phi^t _j)^* (\mathcal L _{Y_j} \omega)~ dt =
\int _{-\infty}^0 \frac{d}{dt}\bigl( (\phi^t _j)^* \omega \bigr)~ dt,$$
which is equal to $\omega$. 

For $X \in \frak b$, we want to estimate the $L^p$ norms of $C^* _{\exp X} (\omega)$ and of its primitives $C^* _{\exp X} (\theta _j)$,
in order to apply Lemma \ref{non-vanishing-lemma} to the form $C^* _{\exp X} (\psi) = \pi^* (C^* _{\exp X} (\omega))$.

Observe that 
$$\bigl\Vert C^* _{\exp X} (\omega) \bigr\Vert _{L^p \Omega^n (N)} = \bigl\Vert (f \circ C _{\exp X}) \cdot C^* _{\exp X}(\mathrm{vol}) \bigr\Vert _{L^p \Omega^n (N )}
= \Vert \omega \Vert _{L^p \Omega^n (N)},$$ 
since $C_{\exp X}$ preserves the Riemannian volume on $N$ 
(by definition of the factor $\frak b <\frak a$). 

To estimate the $L^p$-norm of $C^* _{\exp X} (\theta _j)$, we proceed as follows. 
Define $F_j = \int _{-\infty}^0 f\circ \phi_j^t ~dt$. By assumption (3) the function $F_j$ has compact support. Since $\mathrm{vol}$ is bi-invariant on $N$,
one has $\theta _j = F_j \cdot (\iota _{Y_j} \mathrm{vol})$; and thus, since $\mathrm{vol}$ is $C _{\exp X}$-invariant:
$$C^* _{\exp X} (\theta _j) = (F_j \circ C_{\exp X}) \cdot (\iota _{C_{\exp X}^*(Y_j)} \mathrm{vol}).$$
By assumption (2), one has $\mathrm{Ad}(\exp X)Z_j = e^{-\varpi_j(X)} Z_j$. Since $Y_j = \Re Z_j$, and since it is left-invariant, one has 
$\vert C_{\exp X}^*(Y_j)\vert_g = e^{-\Re \varpi_j(X)} \vert Y_j \vert$, for every $g \in N$.
Thus for every $p \in (1, +\infty)$:
$$\bigl\Vert C^* _{\exp X} (\theta _j) \bigr\Vert _{L^p \Omega^{n-1} (N)} = e^{-\Re \varpi_j(X)} \vert Y_j \vert \Vert F_j \Vert_{L^p(N)}.$$
Note that all the $Y_j$'s are non-zero, indeed the operators $\mathrm{ad}X$ are \emph{real} endomorphisms.

Now assumption (1), in combination with the last equality and compactness of the unit sphere in $\frak b$, implies that there exist 
constants $c, C >0$ such that for every $p \in (1, +\infty)$ and 
every $X \in \frak b$, one has 
$$\inf _{j \in J} \bigl\Vert C^* _{\exp X} (\theta _j) \bigr\Vert _{L^p \Omega^{n-1} (N)} \leqslant Ce^{-c \vert X \vert}.$$ 
Thus, by Lemma \ref{non-vanishing-lemma}, 
the forms $\psi$ and $C^* _{\exp X} (\psi)$ belong to $\mathcal Z^{p, n}(H, \xi)$ for $1<p<\frac{h}{W{n-1}}$. Moreover the norm of $C^* _{\exp X} (\psi)$ decreases exponentially fast to $0$ when $\vert X \vert \to +\infty$.
The statement follows.
\end{proof}

\section{First applications: proof of two non-vanishing results}
\label{s - proof of non-vanishings}

This section is dedicated to the proof of the first two non-vanishing theorems, namely Theorem \ref{introduction-theorem2} and Theorem \ref{introduction-theorem3} mentioned in the introduction; they deal with general solvable groups satisfying conditions (nC) and (nT). 

\subsection{Functions with prescribed vanishing integrals}
\label{ss - suitable functions} 
The following lemmas will serve to exhibit functions satisfying condition (3) of Proposition \ref{non-vanishing-proposition}; these functions will eventually provide suitable non-zero forms leading to our targeted non-vanishing results by Corollary \ref{non-vanishing-corollary}. 

\begin{lemma}\label{criterium-lemma2}
Let $Y, T$ be non-trivial left-invariant vector fields on a connected simply connected nilpotent Lie group $N$. 
Then $N$ admits a non-zero smooth compactly supported function, whose integral along every orbit of $Y$ and $T$ is null.
\end{lemma}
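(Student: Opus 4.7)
The plan is to exhibit a non-zero $f\in C_c^\infty(N)$ lying in the intersection $Y\cdot C_c^\infty(N) \cap T\cdot C_c^\infty(N)$, where $Y,T$ are viewed as left-invariant first-order differential operators on $N$. Any such $f$ satisfies the required vanishing: by the fundamental theorem of calculus applied to the integral curve $s\mapsto x\exp(sY)$ (a diffeomorphic copy of $\mathbb{R}$ since $N$ is simply connected nilpotent), one has $\int_{\mathbb{R}}(Yg)(x\exp(sY))\,ds=0$ for every $g\in C_c^\infty(N)$, and likewise for $T$. So constructing $f=Yg_1=Tg_2$ with $g_1,g_2\in C_c^\infty(N)$ and $f\not\equiv 0$ suffices.

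I would take $f := YT^c g_0$, where $c$ denotes the nilpotency class of $\mathfrak{n}$ (so $\mathfrak{n}^c=0$ in the descending central series) and $g_0\in C_c^\infty(N)$ is to be chosen. The inclusion $f=Y(T^cg_0)\in Y\cdot C_c^\infty(N)$ is immediate. For the second inclusion, set $U_0:=Y$ and $U_{i+1}:=[U_i,T]$; since $[\mathfrak{n},\mathfrak{n}^i]\subseteq\mathfrak{n}^{i+1}$, an easy induction gives $U_i\in\mathfrak{n}^i$ and in particular $U_c=0$. In the universal enveloping algebra of $\mathfrak{n}$, the single relation $U_iT=TU_i+U_{i+1}$, combined by induction on $k$ with Pascal's rule, yields the non-commutative binomial identity
$$
YT^k \;=\; \sum_{i=0}^{k}\binom{k}{i}\, T^{k-i}\, U_i .
$$
Specializing to $k=c$, the top summand vanishes ($U_c=0$) while every remaining term carries at least one factor $T$ on the left; hence $YT^c=TQ$ for some $Q$ in the enveloping algebra, and $f=T(Qg_0)\in T\cdot C_c^\infty(N)$.

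Finally, I pick $g_0$ so that $f\not\equiv 0$. The operator $YT^c$ has order $c+1$ with principal symbol equal at each $x\in N$ to $\sigma(Y)_x\cdot\sigma(T)_x^{\,c}$, a non-zero polynomial on $T_x^*N$ because $Y$ and $T$ are nowhere-vanishing left-invariant vector fields; therefore $YT^c$ does not annihilate $C^\infty(N)$, and a standard cutoff argument provides $g_0\in C_c^\infty(N)$ with $YT^cg_0\not\equiv 0$. The case when $Y,T$ are linearly dependent is automatically covered, since then $U_i=0$ for $i\geq 1$ and the construction collapses to $f$ being a non-zero scalar multiple of $Y^{c+1}g_0$. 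The only substantive step is the binomial identity above; once the standard convention identifying the Lie bracket on $\mathfrak{n}$ with the commutator of left-invariant vector fields is in place, the remainder of the argument is purely algebraic, and the nilpotency of $\mathfrak{n}$ enters only to guarantee that the iterated commutators $U_i$ eventually die.
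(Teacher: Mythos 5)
Your proof is correct, and it takes a genuinely different route from the paper's. The paper argues by induction on the length of the descending central series of the subalgebra generated by $Y$ and $T$: the base case (commuting fields) is handled by a translation trick $f = f_0 - f_0\circ\phi_T^{t_0}$ with disjoint supports, and the inductive step replaces the pair $(Y,T)$ by $(Y, e^{-t_0\,\mathrm{ad}T}Y)$, which generates a subalgebra of strictly smaller length. You instead produce the function in one stroke as $f = YT^c g_0$, using the noncommutative binomial identity $YT^k=\sum_{i}\binom{k}{i}T^{k-i}U_i$ in the enveloping algebra (with $U_i$ the iterated brackets, killed by nilpotency at $i=c$) to see that $f$ is simultaneously an exact $Y$-derivative and an exact $T$-derivative of compactly supported functions; the vanishing of orbit integrals then follows from the fundamental theorem of calculus and properness of one-parameter orbits in a simply connected nilpotent group. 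Your argument is more constructive and avoids both the induction and the auxiliary Lemma on commuting fields; it also makes transparent that only the nilpotency of the subalgebra generated by $Y$ and $T$ is used (you could replace the class of $\mathfrak{n}$ by that of this subalgebra). The paper's inductive scheme, on the other hand, stays within the "disjoint-support translation" framework that it reuses elsewhere (its Lemma on commuting families), which is why it is phrased that way. Two cosmetic points: your indexing convention makes $U_i\in\mathfrak{n}^{i+1}$ under the paper's convention $\mathfrak{n}^1=\mathfrak{n}$, so state clearly which power of the central series vanishes; and the non-annihilation of $C_c^\infty(N)$ by $YT^c$ deserves the one-line justification you give (non-zero principal symbol, or faithfulness of the $U(\mathfrak{n})$-action by left-invariant operators), since that is where non-triviality of $Y$ and $T$ enters.
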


\begin{proof} Let $\frak n$ be the Lie abgebra of $N$, and let $\frak l$ be the subalgebra generated by $Y$ and $T$.
We will prove the lemma by induction on the length of the descending central series of $\frak l$, \emph{i.e.\!} on the smallest
integer $k \geqslant 1$ such that $\frak l^{k+1} =0$ (where $\frak l^1:= \frak l$, $\frak l^2:= [\frak l, \frak l]$, $\frak l^{i+1}:= [\frak l, \frak l^i]$).
Let denote the length by $\mathrm{length}(\frak l)$.

If $\mathrm{length}(\frak l) =1$, then $Y$ and $T$ commute, and the statement follows from (the proof of) Lemma \ref{criterium-lemma1}.
Suppose now that $\mathrm{length}(\frak l) >1$.

Let $\phi_Y^t$ and $\phi _T^t$ be the flows of $Y$ and $T$. 
Let $f_0$ be a non-zero compactly supported smooth function on $N$. 
The flow of every non-trivial left-invariant vector field acts properly on such a Lie group $N$; 
thus there exists $t_0 \in{\bf R}$ with 
$$(\mathrm{support}f_0) \cap \phi_T^{-t_0}(\mathrm{support}f_0) = \varnothing.$$ 
The function $f:= f_0 - f_0 \circ \phi_T^{t_0}$ is non-zero, smooth,
compactly supported, and its integrals along the $T$-orbits are null. 

We are looking for a sufficient condition on $f_0$ which garanties 
that the integrals of $f$ along the $Y$-orbits are also null. 
For $g \in N$ one has 
$$\int _{\bf R} (f \circ \phi _Y^t)(g) ~dt = \int _{\bf R} (f_0 \circ \phi _Y^t)(g) ~dt - 
\int _{\bf R} (f_0 \circ \phi _T^{t_0} \circ \phi _Y^t \circ \phi _T^{- t_0})(\phi _T^{t_0}(g)) ~dt.$$
Thus, for the integrals of $f$ along the $Y$-orbits to be null, 
it is enough that the integrals of $f_0$ along the orbits of 
$Y$ and of $(\phi _T^{-t_0})^*(Y)$ are null.

The vector field $(\phi _T^{-t_0})^*(Y)$ is left-invariant. Its value at $1_N$ is 
$$Z:= \mathrm{Ad}(-t_0 T) Y = e^{\mathrm{ad}(-t_0 T)} Y.$$
According to the induction hypothesis, such an $f_0$ exists if the subalgebra $\frak m$ generated by $Y$ and $Z$
satisfies $\mathrm{length}(\frak m) < \mathrm{length}(\frak l)$.

Since $Y, Z \in \frak l$, one has $\frak m \subset \frak l$. Moreover:
$$[Y,Z] = [Y, e^{\mathrm{ad}(-t_0 T)}Y] = [Y, Y] - t_0 \bigl[Y, [T, Y]\bigr] + \dots \in \frak l^3.$$
It follows that $\frak m^2 \subset \frak l^3$, thus 
$\frak m^3 = [\frak m, \frak m^2] \subset [\frak l, \frak l^3] = \frak l^4$, \dots,
and so $\frak m^i \subset \frak l^{i+1}$ for every $i \geqslant 2$.
Therefore one has $\mathrm{length}(\frak m) < \mathrm{length}(\frak l)$ as expected.
\end{proof}

\begin{lemma}
\label{criterium-lemma1} 
Let $M$ be a smooth manifold and $Y_1,\dots, Y_k, T$ be a family of complete smooth vector fields. Assume that $T$
commutes with all the $Y_i$'s, and that its flow acts properly on $M$.
Suppose that the subfamily $Y_1,\dots, Y_k$ satisfies the following property: 
there exists a non-zero $C^1$ compactly supported function on $M$, 
whose integral along every orbit of the $Y_i$'s is null. Then the same property holds for the entire family $Y_1,\dots, Y_k, T$.
\end{lemma}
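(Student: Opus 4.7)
The plan is to build $f$ from $f_0$ by the translate-difference trick $f := f_0 - f_0\circ \phi_T^{t_0}$, where $\phi_T^{t}$ denotes the flow of $T$ and $t_0 \in {\bf R}$ is to be chosen. This is exactly the move that opens the proof of Lemma~\ref{criterium-lemma2}, but the situation here is strictly simpler because $T$ is assumed to commute with each $Y_i$, so we do not need to iterate the trick or descend in nilpotency class.

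First I would use the properness of the action of $\{\phi_T^{t}\}_{t\in{\bf R}}$ on $M$ to pick $t_0$ so large that $(\mathrm{supp}\,f_0)\cap \phi_T^{-t_0}(\mathrm{supp}\,f_0) = \varnothing$. This guarantees simultaneously that $f$ is non-zero and that $\mathrm{supp}\,f \subset \mathrm{supp}\,f_0 \cup \phi_T^{-t_0}(\mathrm{supp}\,f_0)$ is compact; the $C^1$ regularity is inherited from $f_0$. The vanishing of the integrals of $f$ along $T$-orbits is then immediate from the change of variable $s \mapsto s+t_0$: for every $g \in M$,
\[
\int_{{\bf R}} f\bigl(\phi_T^{s}(g)\bigr)\,ds = \int_{{\bf R}} f_0\bigl(\phi_T^{s}(g)\bigr)\,ds - \int_{{\bf R}} f_0\bigl(\phi_T^{s+t_0}(g)\bigr)\,ds = 0.
\]

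For the vanishing along each $Y_i$-orbit, I would invoke the commutation hypothesis: since $[T,Y_i]=0$, the associated flows commute, i.e.\ $\phi_T^{t_0}\circ \phi_{Y_i}^{s} = \phi_{Y_i}^{s}\circ \phi_T^{t_0}$. Applied at an arbitrary point $g \in M$, this yields
\[
\int_{{\bf R}} f_0\bigl(\phi_T^{t_0}\phi_{Y_i}^{s}(g)\bigr)\,ds = \int_{{\bf R}} f_0\bigl(\phi_{Y_i}^{s}\phi_T^{t_0}(g)\bigr)\,ds = 0,
\]
the last equality being the hypothesis on $f_0$ applied at the point $\phi_T^{t_0}(g)$. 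Subtracting from the same (zero) integral of $f_0$ along the $Y_i$-orbit through $g$ gives $\int_{{\bf R}} f(\phi_{Y_i}^{s}(g))\,ds = 0$, as required.

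There is essentially no obstacle: the only point that requires more than a formal computation is the non-triviality of $f$, which is forced precisely by the properness of the $T$-flow. The commutativity of $T$ with each $Y_i$ makes the two types of orbit-integral cancellations decouple completely, so the whole argument reduces to a one-line trick.
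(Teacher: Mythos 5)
Your proof is correct and is essentially identical to the paper's argument: the same translate-difference function $f = f_0 - f_0\circ\phi_T^{t_0}$, with properness of the $T$-flow giving non-triviality and the commutation of flows giving the vanishing of the $Y_i$-orbit integrals. You have merely spelled out the change of variables and the flow-commutation computation that the paper leaves implicit.
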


\begin{proof} Let $f_0$ be a function satisfying the property for the subfamily $Y_1, \dots, Y_k$. Let $\phi^t$ be the flow of $T$.
Since it acts properly on $M$ there exists $t_0 \in{\bf R}$ such that 
$(\mathrm{support}f_0) \cap \phi^{-t_0}(\mathrm{support}f_0) = \varnothing$.
Define $f = f_0 - f_0 \circ \phi^{t_0}$. It is a non-zero $C^1$ compactly supported function whose integral along every orbit of $T$ is null. 
Since $T$ commutes with the $Y_i$'s, the integrals of $f$ along their orbits remain null.
\end{proof}

\subsection{Two non-vanishing results for solvable groups}
We can now give the proofs of Theorems \ref{introduction-theorem2} and \ref{introduction-theorem3}. 

\begin{proof}[Proof of Theorem
\ref{introduction-theorem2}]
We can assume that $N = {\bf R}^n$.
With the notations of Subsection \ref{ss - criterium}, let $J \subset \{1, \dots, n\}$ be a minimal subset such that 
$$\{\varpi _j: j \in J\} \cup \{\overline{\varpi _j}: j \in J \} = \{\varpi_1, \dots, \varpi_n\}$$ 
as sets of linear forms on $\frak a$. It satisfies assumption
(1) of Proposition \ref{non-vanishing-proposition}.
For every $j \in J$, let $Z_j$ be as in assumption (2) of Proposition \ref{non-vanishing-proposition}.
The orbits in ${\bf R}^n$ of the vector field $Y_j:= \Re Z_j$ are the lines parallel to $Y_j$.
They commute. Thus Lemma \ref{criterium-lemma1} provides a function which satisfies assumption (3) of 
Proposition \ref{non-vanishing-proposition}.
Now Theorem \ref{introduction-theorem2} follows from Corollary \ref{non-vanishing-corollary}.
\end{proof}

\begin{proof}[Proof of Theorem 
\ref{introduction-theorem3}]
Since $A \simeq {\bf R}^2$, one has $B \simeq \mathbf R$. Let $X_0$ be a non-zero vector in $\frak b$. 
Let $J = \{j_1, j_2\} \subset \{1, \dots, n\}$
be such that $\Re (\varpi _{j_1} (X_0)) >0$ and $\Re (\varpi _{j_2} (-X_0)) >0$. It satisfies assumption
(1) of Proposition \ref{non-vanishing-proposition}. 
For every $j \in J$, let $Z_j$ be as in assumption (2) of Proposition \ref{non-vanishing-proposition}, and let 
$Y_j:= \Re Z_j$. By applying Lemma \ref{criterium-lemma2} to the pair of left-invariant vector fields $Y_{j_1}, Y_{j_2}$,
one obtains a function that satisfies assumption (3) of 
Proposition \ref{non-vanishing-proposition}.
Now Theorem \ref{introduction-theorem3} follows from Corollary \ref{non-vanishing-corollary}.
\end{proof}

\section{Semisimple Lie groups}
\label{s - semisimple} 

In this section, we are looking for connected solvable Lie subgroups $R$ in semisimple real Lie groups which are of the form $R = A \ltimes N$ with $A \simeq {\bf R}^l$ and $l \geqslant 1$. 
Of course these groups will satisfy the contraction condition (nC) and the non-triviality condition (nT). 
More precisely, our goal is to prove that in any semisimple real Lie group with finite center, the solvable subgroups appearing in Iwasawa decompositions fulfill 
the assumptions of our non-vanishing criterion Corollary \ref{non-vanishing-corollary}. As a consequence we obtain a proof of Theorem \ref{introduction-theorem4},
hence a proof for Theorem \ref{introduction-theorem1}, in view of the reduction contained in \ref{ss - ss and QI}. 
From a technical viewpoint, the proofs in this section are ultimately relevant to the combinatorics of root systems.

\subsection{Lie-theoretic notions and notations}
\label{ss - combinatorial problem} 
Let $G$ be a semisimple real Lie group with finite center. 
We pick in $G$ a subgroup $A$ which is maximal for the properties of being connected, abelian and diagonalizable over the real numbers in the adjoint representation of $G$. 
We denote the latter representation by 
\[
{\rm Ad}: G \to {\rm GL}({\frak g}), 
\]
with ${\frak g} = {\rm Lie}\, G$ .
We have a direct sum decomposition 
\[
{\frak g} = {\frak g}_0 \oplus \bigoplus_{\alpha \in \Phi(G,A)} {\frak n}_\alpha,
\]
where ${\frak g}_0$ is the subspace on which the adjoint $A$-action is trivial and where ${\frak n}_\alpha$ is the weight space associated with the character $\alpha$ of $A$: 
\[
{\frak n}_\alpha = \{X \in \frak{g}: {\rm Ad}(a)X = \alpha(a)X \,\, \hbox{\rm for all} \,\, a \in A\}.
\]

We also pick a minimal parabolic subgroup $P$ containing $A$, which provides a basis and a positive root subset $\Phi^+$ for the root system $\Phi=\Phi(G,A)$ of $G$. 
In order to fit with the notation of \cite{BBK-Lie-4to6}, we set $V=\frak a^*$. 
This real vector space is equipped with the Killing form, which makes it a Euclidean space; this enables us to identify $\frak a$ and $\frak a^*$. 
Finally, we set: 
\[
2 \rho_G = \sum_{\alpha \in \Phi^+} {\rm dim}({\frak n}_\alpha) \alpha, 
\]
and we denote by $N$ the nilpotent group integrating $\bigoplus_{\alpha \in \Phi^+} {\frak n}_\alpha$. 
Thus, $R=AN$ is the solvable part of an Iwasawa decompostion of $G$, and all such solvable subgroups of $G$ can be obtained by varying the choices of $A$ and $P$ above. 
The remaining choice is that of a maximal compact subgroup $K$: the stabilizer of any point in the maximal flat associated to $A$ in the Riemmanian symmetric space associated to $G$ does the job. 

From a combinatorial viewpoint, in the main part of this section we are looking for a subset $\Psi \subset \Phi^+$ and a family of real numbers $(m_\beta)_{\beta \in \Psi}$, with $m_\beta > 0$ for each $\beta \in \Psi$, such that 

\smallskip 

\begin{enumerate}
\item[{\rm (i)}]~we have: $2\rho_G = \sum_{\beta \in \Psi} m_\beta \beta$, 
\item[{\rm (ii)}]~we have: $V = \sum_{\beta \in \Psi} {\bf R} \beta$, 
\item[{\rm (iii)}]~if $\beta, \beta' \in \Psi$ then: $\beta+\beta' \not\in \Phi$. 
\end{enumerate} 

\noindent
We will sometimes relax condition (iii) to the following one 

\vspace{3mm} 
\begin{enumerate}
\item[${\rm (iii)}'$]~if $\beta, \beta' \in \Psi$ then: $\beta+\beta' \not\in \Phi$ except maybe for one pair $\{ \beta;\beta' \} \subset \Psi$. 
\end{enumerate} 

\subsection{Relationship with the non-vanishing criterion} 
Let us explain why the above combinatorial conditions are relevant to the non-vanishing criterion of the previous section. 
In \ref{ss - criterium}, the set $\{1, 2,\dots, n \}$ is determined by the dimension of the nilpotent group $N$: in the present section, we see it more concretely as the set of positive roots counted with their multiplicities; accordingly, the subset $J$ of \ref{ss - criterium} is denoted here by $\Psi$ in view of its interpretation in terms of roots. 
Note also that, by the choice of $A$ here (a maximal ${\bf R}$-split torus), we do not need to extend the scalars to ${\bf C}$ in our context. 

First of all, the solvable group $R=AN$ from \ref{ss - combinatorial problem} satisfies condition (nC) since we can choose $-\xi$ to be the sum of the fundamental coweights for the root system of $G$ (in fact, we can take any vector in $\frak{a}^+$, \emph{i.e.}~in the Weyl cone given by $AN$, tangent to a regular geodesic in the symmetric space). 
The group $R$ also satisfies condition (nT) by definition of a root as a \emph{non-trivial} character of $A$ (or $\frak{a}$). 
We now concentrate on checking the conditions of the non-vanishing criterion in the present situation. 

\begin{lemma}
\label{lemma - ss OK} 
We assume we are given $\Psi$ satisfying {\rm (i)-(iii)}$'$. 
Then the choice $J = \Psi$ fulfills the assumptions of Proposition \ref{non-vanishing-proposition}. 
\end{lemma}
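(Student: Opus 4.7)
The plan is to verify the three conditions of Proposition \ref{non-vanishing-proposition} for the choice $J = \Psi$, where for each $\beta \in \Psi$ I pick an arbitrary non-zero root vector $Z_\beta \in \mathfrak{n}_\beta$. Since $A$ is ${\bf R}$-split, every weight $\varpi_j$ from Subsection \ref{ss - criterium} is real-valued and coincides (up to multiplicity) with a positive root $\beta \in \Phi^+$, so the complexification does not intervene here.

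First I would identify $\mathfrak{b}$ explicitly. Since $\mathfrak{a}$ is abelian, the trace of $\mathrm{ad}\, X$ on $\mathrm{Lie}(R)=\mathfrak{a}\oplus\mathfrak{n}$ reduces to its trace on $\mathfrak{n}$, which equals $\sum_{\alpha \in \Phi^+} \dim(\mathfrak{n}_\alpha)\, \alpha(X) = 2\rho_G(X)$. Hence $\mathfrak{b} = \ker(2\rho_G)$. For $X \in \mathfrak{b} \setminus \{0\}$, hypothesis (i) gives $\sum_{\beta \in \Psi} m_\beta \beta(X) = 2\rho_G(X) = 0$ with each $m_\beta > 0$; so if all $\beta(X) \leqslant 0$ then all vanish, contradicting the spanning hypothesis (ii). Therefore some $\beta \in \Psi$ has $\beta(X) > 0$, which is condition (1). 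Condition (2) is automatic: for $X \in \mathfrak{b} \subset \mathfrak{a}$ and $\beta \in \Psi$, one has $\mathrm{ad}(X) \cdot Z_\beta = \beta(X) Z_\beta$ by the very definition of the root space $\mathfrak{n}_\beta$, and the weight $\beta$ is $\mathbf{R}$-valued on $\mathfrak{b}$.

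The main obstacle is condition (3). The decisive structural observation is that for $\beta, \beta' \in \Psi$ the bracket $[Z_\beta, Z_{\beta'}]$ lies in $\mathfrak{n}_{\beta+\beta'}$, which vanishes unless $\beta + \beta' \in \Phi$. Under hypothesis (iii), the left-invariant fields $Y_\beta = Z_\beta$ therefore commute pairwise, and I would argue by iteration: starting from a non-zero compactly supported $C^1$ function on $N$ whose integrals along the orbits of a single $Y_{\beta_0}$ vanish (the $f_0 - f_0 \circ \phi^{t_0}$ construction recalled in the proofs of Lemmas \ref{criterium-lemma1} and \ref{criterium-lemma2} applies, as every non-trivial left-invariant field on the simply connected nilpotent group $N$ has a flow acting properly), I repeatedly apply Lemma \ref{criterium-lemma1} to incorporate the remaining $Y_\beta$'s one at a time. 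Under the relaxed hypothesis (iii)$'$, one exceptional pair $\{\beta_1, \beta_2\} \subset \Psi$ with $\beta_1 + \beta_2 \in \Phi$ may exist; for that pair I instead invoke Lemma \ref{criterium-lemma2} as the base case to obtain an initial function with null integrals along the orbits of both $Y_{\beta_1}$ and $Y_{\beta_2}$, and then adjoin each remaining $Y_\beta$ via Lemma \ref{criterium-lemma1}, which applies because $Y_\beta$ commutes with every other $Y_{\beta'}$ in the list (including $Y_{\beta_1}$ and $Y_{\beta_2}$), since (iii)$'$ forbids any further pair with sum in $\Phi$.
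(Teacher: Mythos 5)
Your proposal is correct and follows essentially the same route as the paper: condition (2) is automatic from the root-space decomposition, condition (1) follows from the identification $\mathfrak{b}=\ker(2\rho_G)$ together with (i), the positivity of the $m_\beta$ and the spanning condition (ii), and condition (3) is obtained from $[\mathfrak{n}_\alpha,\mathfrak{n}_{\alpha'}]=\{0\}$ when $\alpha+\alpha'\notin\Phi$, using Lemma \ref{criterium-lemma2} for the single exceptional pair allowed by (iii)$'$ and then adjoining the remaining fields inductively via Lemma \ref{criterium-lemma1}. Your explicit remark that the newly adjoined field commutes with \emph{all} previously incorporated ones (since (iii)$'$ forbids a second exceptional pair) makes precise a point the paper leaves implicit, but the argument is the same.
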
 

The lemma will lead to the desired non-vanishing statement, hence most of the rest of the Section will be dedicated to exhibiting such a subset $\Psi$ for any semisimple 
real Lie group $G$.

\begin{proof}
As mentioned in Remark \ref{rk - non-vanishing condition}, Condition (2) is automatically satisfied by definition of the adjoint representation and of the roots. 

Let us check Condition (1). 
By (i), we have: 
\[ 
\sum_{\alpha \in \Phi^+} {\rm dim}({\frak n}_\alpha) \alpha = 2 \rho_G = \sum_{\beta \in \Psi} m_\beta \beta.
\]
Now let $X \in \frak{b}$. 
Note first that for any $X \in \frak{a}$, we have: $\mathrm{trace}(\mathrm{ad}X) = 2 \rho_G(X)$; therefore $\frak{b}$, defined in \ref{introduction-strategy} by 
$\frak b=\{X \in \frak a: \mathrm{trace}(\mathrm{ad}X) = 0\}$, can be seen as $\frak b=\{X \in \frak a: \rho_G(X) = 0\}$. 
We assume that $\beta(X) \leqslant 0$ for each $\beta \in \Psi$. 
Since $m_\beta > 0$ for each $\beta \in \Psi$, we deduce that $\beta(X)=0$ for each $\beta \in \Psi$. 
Finally this implies that $X=0$ by (ii), proving that (1) is satisfied. 

Condition (3) is more delicate to check but it is treated thanks to the lemmas in \ref{ss - suitable functions} combined with the fact that if for $\alpha, \alpha' \in \Phi^+$ we have $\alpha + \alpha' \not\in \Phi$ then $[\frak{n}_\alpha,\frak{n}_{\alpha'}]=\{ 0 \}$. 
Indeed, in view of this, the function requested by Condition (3) is directly given by an inductive use of Lemma \ref{criterium-lemma1} in case $\Psi$ satisfies (iii); if $\Psi$ satisfies (iii)$'$ only, then one has to apply first Lemma \ref{criterium-lemma2}Â   to the pair of roots in $\Psi$ whose sum is a root, and then again apply inductively Lemma \ref{criterium-lemma1}. 
\end{proof}

In the rest of the Section, we are thus reduced to exhibiting subsets $\Psi$ satisfying (i)-(iii)$'$, or even better satisfying (i)-(iii). 
Note that in view of Theorem \ref{introduction-theorem3}  we may -- and shall -- assume that the rank $l = {\rm dim}(A)$ of $G$ is at least $3$.

\subsection{A preliminary result in linear algebra}
\label{ss - linear algebra}
We use the notation from \cite[Planches]{BBK-Lie-4to6}, and more precisely the concrete descriptions of root systems in terms of linear algebra.
In particular, we consider the standard Euclidean space ${\bf R}^l$ with canonical basis $(\varepsilon_i)_{1 \leqslant i \leqslant l}$. 
We are interested in sums of the form 
\begin{align*}
\sum_{m=1}^M m(\varepsilon_{l-2m+1} + \varepsilon_{l-2m} + & ~\varepsilon_{l-2m} + \varepsilon_{l-2m-1})=\\ 
 &= \sum_{m=1}^M m(\varepsilon_{l-2m+1} + 2\varepsilon_{l-2m} + \varepsilon_{l-2m-1}), 
\end{align*}
where $M$ is a suitably chosen integer. 
We will see below that such linear combinations are very useful to approximate the sum of positive roots of some fixed norm in suitable root systems. 

In addition, the vectors appearing in these sums are sums of two consecutive vectors of the form $\varepsilon_{i}+\varepsilon_{i+1}$: in many root systems, such a vector $\varepsilon_{i}+\varepsilon_{i+1}$ is a positive root and the sum of two such vectors is {\it not}~a root, which is useful to achieve condition (iii), or maybe (iii)$'$, above. 

More precisely, in many root systems the sum of positive roots of norm equal to $\sqrt{2}$ is the vector $\sigma$ given by: 
\[
\sigma = 
\sum_{i<j} \varepsilon_i \pm \varepsilon_j 
= \sum_{i=1}^{l-1} \bigl( \sum_{j>i} (\varepsilon_i + \varepsilon_j + \varepsilon_i - \varepsilon_j) \bigr) 
= 2 \sum_{i=1}^{l-1} (l-i) \varepsilon_i
= 2 \sum_{i=1}^{l-1} i \varepsilon_{l-i}, 
\]
and we want to see $\sigma$ as a linear combination, with positive coefficients, of vectors $\varepsilon_{i}+\varepsilon_{i+1}$. 

\begin{lemma}
\label{lemma - linear algebra}
Besides the vector $\displaystyle \sigma = \sum_{1 \leqslant i<j \leqslant l} \varepsilon_i \pm \varepsilon_j = 2 \sum_{i=1}^{l-1} i \varepsilon_{l-i}$ above, we introduce the sum 
$\displaystyle 
S = \sum_{m=1}^{\lfloor {l-2 \over 2} \rfloor} m(\varepsilon_{l-2m+1} + \varepsilon_{l-2m} + \varepsilon_{l-2m} + \varepsilon_{l-2m-1})$. 
Then: 
\begin{itemize} 
\item[$\bullet$] if $l$ is even, we have: $\sigma = 2S + l \varepsilon_1$; 
\item[$\bullet$] if $l$ is odd, we have: $\sigma = 2S + (l-1)  (\varepsilon_1 + \varepsilon_2) + (l-1) \varepsilon_1$. 
\end{itemize} 
\end{lemma}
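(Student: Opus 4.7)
The plan is a direct coefficient comparison in the basis $(\varepsilon_j)_{1 \leqslant j \leqslant l}$. First I would rewrite $\sigma$ by re-indexing $i \mapsto l-i$ in the given formula, so that the coefficient of $\varepsilon_j$ in $\sigma$ is $2(l-j)$ for $1 \leqslant j \leqslant l-1$ and $0$ for $j = l$. Then I would expand
\[
2S = \sum_{m=1}^{M} \bigl( 2m\,\varepsilon_{l-2m+1} + 4m\,\varepsilon_{l-2m} + 2m\,\varepsilon_{l-2m-1} \bigr),
\]
with $M = \lfloor (l-2)/2 \rfloor$, and for each $j \in \{1, \dots, l\}$ decide which of the three term-types contribute.

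The key observation is that $j = l - 2m$ forces $l-j$ to be even (so $j$ has the same parity as $l$), whereas $j = l - 2m \pm 1$ forces $l-j$ to be odd. Hence the parity of $l-j$ alone determines whether a single middle contribution or two outer contributions arise, and the constraint $1 \leqslant m \leqslant M$ carves out the set of boundary indices at which contributions drop out. In the generic interior range, the two outer contributions combine as $(l-j+1) + (l-j-1) = 2(l-j)$, and the single middle contribution $4m = 2(l-j)$ also matches, so $2S$ agrees with $\sigma$ away from a small set of boundary indices.

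I would then split into the two announced cases. When $l$ is even, one checks that the interior agreement holds throughout and the only boundary discrepancy occurs at $j = 1$: the middle term cannot reach $j=1$ and the first outer term cannot either, so only the third contributes, with weight $l-2$ instead of the desired $2(l-1)$; the defect is exactly $l$, giving $\sigma = 2S + l \varepsilon_1$. When $l$ is odd, the boundaries shift: at $j = l-1$ everything still matches, but at $j = 2$ only the third outer term reaches it, producing weight $l-3$ against $\sigma$'s $2(l-2)$ (deficit $l-1$), while at $j = 1$ no contribution arises at all (deficit $2(l-1)$). Adding these two defects gives $\sigma - 2S = (l-1)(\varepsilon_1 + \varepsilon_2) + (l-1)\varepsilon_1$, as required.

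The only real obstacle is bookkeeping: tracking parity constraints and the precise range of $m$ so that the boundary indices are correctly identified. Once the interior cancellation $(l-j+1)+(l-j-1) = 2(l-j)$ is in hand, the argument reduces to inspecting two or three boundary indices per parity case, which is a finite and routine verification.
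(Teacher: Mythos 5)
Your proof is correct: the coefficient-of-$\varepsilon_j$ bookkeeping, the interior identity $(l-j+1)+(l-j-1)=2(l-j)$, and the boundary defects at $j=1$ (and $j=2$ in the odd case) all check out and reproduce the stated formulas. This is essentially the same argument as the paper's, which likewise computes $S$ coefficient by coefficient (organized by pairing consecutive terms of $S$) and then reads off the discrepancy with $\frac{1}{2}\sigma$ at the lowest indices.
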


\begin{proof}
In the sum $S_M = \sum_{m=1}^M m(\varepsilon_{l-2m+1} + \varepsilon_{l-2m} + \varepsilon_{l-2m} + \varepsilon_{l-2m-1})$, we consider the sum of two consecutive terms, that is: 
\[
k(\varepsilon_{l-2k+1} + 2\varepsilon_{l-2k} + \varepsilon_{l-2k-1}) + (k+1)(\varepsilon_{l-2k-2+1} + 2\varepsilon_{l-2k-2} + \varepsilon_{l-2k-2-1})
\]
\[
= k \varepsilon_{l-2k+1} + 2k \varepsilon_{l-2k} + (2k+1) \varepsilon_{l-2k-1} + (2k+2) k \varepsilon_{l-2k-2} + (k+1) \varepsilon_{l-2k-3}.
\]

The three middle terms also appear in ${1 \over 2} \sigma$. 
The maximal index of summation $M$ is the biggest integer $m$ satisfying $l-2m-1 \geqslant 1$, that is $m \leqslant {l-2 \over 2}$. 
This explains why we are led to considering: 
\[ 
S = S_{\lfloor {l-2 \over 2} \rfloor} = \sum_{m=1}^{\lfloor {l-2 \over 2} \rfloor} m(\varepsilon_{l-2m+1} + \varepsilon_{l-2m} + \varepsilon_{l-2m} + \varepsilon_{l-2m-1}). 
\]
The sum $S = S_{\lfloor {l-2 \over 2} \rfloor}$ approximates ${1 \over 2} \sigma$ down to the index $l -2 \lfloor {l-2 \over 2} \rfloor - 1$. 

$\bullet$ If $l$ is even, we have $\lfloor {l-2 \over 2} \rfloor = {l-2 \over 2}$, then $l -2 \lfloor {l-2 \over 2} \rfloor - 1 = 1$ and therefore: 
\begin{align*}
S = &\sum_{m=1}^{\lfloor {l-2 \over 2} \rfloor} m(\varepsilon_{l-2m+1} + \varepsilon_{l-2m} + \varepsilon_{l-2m} + \varepsilon_{l-2m-1})\\
= &\sum_{j = 1}^{l-2} j \varepsilon_{l-j} + ({l \over 2} - 1) \varepsilon_1 = {1 \over 2} \sigma - {l \over 2} \varepsilon_1, 
\end{align*}
so that in this case: 
\[
{1 \over 2} \sigma = S + {l \over 2} \varepsilon_1.
\]

$\bullet$ If $l$ is odd, we have $\lfloor {l-2 \over 2} \rfloor < {l-2 \over 2}$, then $l -2 \lfloor {l-2 \over 2} \rfloor - 1 = l - (l-3) - 1 = 2$ and therefore: 
\[
S = \sum_{m=1}^{\lfloor {l-2 \over 2} \rfloor} m(\varepsilon_{l-2m+1} + \varepsilon_{l-2m} + \varepsilon_{l-2m} + \varepsilon_{l-2m-1})
= \sum_{j = 1}^{l-3} j \varepsilon_{l-j} + \lfloor {l-2 \over 2} \rfloor \varepsilon_2. 
\]
If we denote $l=2r+1$ with $r \geqslant 1$, then $\lfloor {l-2 \over 2} \rfloor = \lfloor {2r-1 \over 2} \rfloor = \lfloor r-{1 \over 2} \rfloor = r-1$ and we have: 
\[
S = \sum_{j = 1}^{l-3} j \varepsilon_{l-j} +(r-1) \varepsilon_2,
\]
so that, since the coefficient of $\varepsilon_2$ in ${1 \over 2} \sigma$ is $l-2 = (2r+1) - 2 = 2r-1$, we have 
${1 \over 2} \sigma = S + r(\varepsilon_1 + \varepsilon_2) + r \varepsilon_1$ and finally
\[
{1 \over 2} \sigma = S + {l-1 \over 2}  (\varepsilon_1 + \varepsilon_2) + {l-1 \over 2} \varepsilon_1.
\]
This concludes the proof. 
\end{proof}

At this stage, in order to find subsets $\Psi$ of positive roots achieving the conditions (i)-(iii)$'$ at the end of Section \ref{ss - combinatorial problem}, we use the classification of simple real Lie groups. 
The parameters of this classification are: a (possibly non-reduced) root system, and the multiplicities of the roots. 
According to \'E.~Cartan's classification, a simple real Lie group either is a simple complex Lie group seen as a real one (in which case the root system is reduced and all multiplicities are equal to 2), or is absolutely simple (\emph{i.e.}~ its Lie algebra stays simple after complexification) and belongs to the list given for instance in \cite[Chapter X, Table VI, pp.\! 532-534]{Helgason}. 
One useful fact is that the Weyl group acts transitively on roots of given norm (see \cite[Lemma C, p.\! 53]{Humphreys-LA}), 
so that not so many possibilities of root multiplicities appear (one multiplicity if the root system is simply laced \emph{i.e.}~if the edges in the Dynkin diagram are all simple, 
at most 2 if it is reduced, at most 3 otherwise). 
This also implies that $2 \rho_G = \sum_{\alpha \in \Phi^+} {\rm dim}({\frak n}_\alpha) \alpha$ can be computed as a sum of at most 3 partial sums, namely packets of roots of given norm multiplied by the corresponding multiplicity. 

In the rest of this Section, we check that we can find a suitable root subset $\Psi$ for each isomorphism class of simple real Lie groups by sorting them according to their relative root system first and then their multiplicities. 
The reader will easily check that all Cartan types appearing in \cite[Chapter X, Table VI, pp.\! 532-534]{Helgason} are covered.

\subsection{All types except $\mathrm A_l$}
To be consistent with the notation of \cite[Chapter X, Table VI, pp.\! 532-534]{Helgason}, we denote by $l$ the real rank of $G$ and by $r$ its complex rank (recall that the latter rank is the dimension of a subalgebra in $\frak{g} \otimes_{\bf R} {\bf C}$ consisting of diagonalizable elements in the adjoint representation, and maximal for this elementwise property). 
Recall that we can restrict our attention to the case $l \geqslant 3$, thanks to Theorem \ref{introduction-theorem3}. 

$\bullet$ {\bf Type ${\rm B}_l$} 
\cite[Planche II, p.\! 252]{BBK-Lie-4to6}. 
This is a non simply laced root system, and indeed some cases with different multiplicities do appear in Cartan's classification. 
This is the root system in the standard Euclidean space ${\bf R}^l$ with canonical basis $(\varepsilon_i)_{1 \leqslant i \leqslant l}$, where the positive roots are the vectors of the form $\varepsilon_i$ (there are $l$ such roots, of norm 1) or $\varepsilon_i \pm \varepsilon_j$ with $i<j$ (there are $l(l-1)$ such roots, of norm $\sqrt{2}$). 
We still denote the partial sum of roots of norm $\sqrt{2}$ by 
\[
\sigma = \sum_{i<j} \varepsilon_i \pm \varepsilon_j = 2 \sum_{i=1}^{l-1} i \varepsilon_{l-i}, 
\]
and we denote the partial sum of roots of norm $1$ by $\tau = \sum_{i=1}^l \varepsilon_i$.
The Cartan types where this root system appears are the types ${\rm B}_{\rm I}$ 
and ${\rm D}_{\rm I}$, 
in which cases we have: 
\[ 
2 \rho_{{\rm B}_{\rm I}} = \sigma + \bigl( 2(r-l) + 1 \bigr) \tau 
\quad \hbox{\rm and} \quad 
2 \rho_{{\rm D}_{\rm I}} = \sigma + 2(r-l) \tau. 
\]
Let us treat thoroughly the Cartan type ${\rm B}_{\rm I}$, the type ${\rm D}_{\rm I}$ being similar. 
We invoke Lemma \ref{lemma - linear algebra}, together with the associated notation. 

If $l$ is even, we have: 
\[
2 \rho_{{\rm B}_{\rm I}} = 2 S + l \varepsilon_1 + \bigl( 2(r-l) + 1 \bigr) \sum_{m=1}^{{l \over 2}} (\varepsilon_{2m-1} +\varepsilon_{2m})
\]
so that we can take the subset of positive roots $\Psi$ to be consisting of $\varepsilon_1$ and of the roots $\varepsilon_{j} + \varepsilon_{j+1}$ for $1 \leqslant j \leqslant l-1$. 
This choice of $\Psi$ gives immediately (i) in view of the definition of the sum $S$ and (ii), \emph{i.e.}~ the fact that $\Psi$ generates $V$, is easy; at last (iii) follows from the fact that no coefficient 2 appears in the coordinates of roots and the fact that the support of roots has cardinality $\leqslant 2$. 

If $l$ is odd, we have 
\begin{align*} 
2 \rho_{{\rm B}_{\rm I}} = & ~2 S + 2 (l-1)(\varepsilon_1+\varepsilon_2) + (l-1)\varepsilon_1 \\ 
+ & ~\bigl( 2(r-l) + 1 \bigr) \sum_{m=1}^{l-1 \over 2} (\varepsilon_{2m} +\varepsilon_{2m+1}) + \bigl( 2(r-l) + 1 \bigr) \varepsilon_1. 
\end{align*}
The expression is more complicated but it gives (i) and the same arguments as above work to give (ii) and (iii). 

At last, as already mentioned, the Cartan type ${\rm D}_{\rm I}$ is deduced from this case after replacing the coefficient $ \bigl( 2(r-l) + 1 \bigr)$ by $2(r-l)$.

$\bullet$ {\bf Type ${\rm C}_l$} 
\cite[Planche III, p.\! 254]{BBK-Lie-4to6}. 
This is a non simply laced root system, and again some cases with different multiplicities do appear in Cartan's classification. 
This is the root system in the standard Euclidean space ${\bf R}^l$ with canonical basis $(\varepsilon_i)_{1 \leqslant i \leqslant l}$, where the positive roots are the vectors of the form $2 \varepsilon_i$ (there are $l$ such roots, of norm 2) or $\varepsilon_i \pm \varepsilon_j$ with $i<j$ (there are $l(l-1)$ such roots, of norm $\sqrt{2}$). 
We still denote the partial sum of roots of norm $\sqrt{2}$ by 
\[
\sigma = \sum_{i<j} \varepsilon_i \pm \varepsilon_j = 2 \sum_{i=1}^{l-1} i \varepsilon_{l-i}, 
\]
and we denote the partial sum of roots of norm $2$ by $\kappa = \sum_{i=1}^l 2\varepsilon_i$.
The Cartan types covered by this root system are: some cases ${\rm A}_{\rm III}$, all cases ${\rm C}_{\rm I}$, some cases ${\rm C}_{\rm II}$, half of the cases ${\rm D}_{\rm III}$ and the case ${\rm E}_{\rm VII}$. 
For any such type, say ${\rm G}$, we have: 
\[ 
2 \rho_{\rm G} = m_\sigma \sigma + m_\tau \kappa
\]
where $m_\sigma$ and $m_\tau$ are integers $\geqslant 1$\footnote{More precisely, we have: 
$2 \rho_{{\rm A}_{\rm III}} = 2 \sigma + \kappa$ in the ${\rm C}_l$ case of ${\rm A}_{\rm III}$, 
$2 \rho_{{\rm C}_{\rm I}} = \sigma + \kappa$, 
$2 \rho_{{\rm C}_{\rm II}} = 4 \sigma + 3 \kappa$ in the ${\rm C}_l$ case of ${\rm C}_{\rm II}$, 
we have $2 \rho_{{\rm D}_{\rm III}} = 4 \sigma + \kappa$ in the ${\rm C}_l$ case of ${\rm D}_{\rm III}$ 
and 
$2 \rho_{{\rm E}_{\rm VII}} = 8 \sigma + \kappa$, the root system being ${\rm C}_3$ in the latter case.}. 

We choose for $\Psi$ the set of roots $2 \varepsilon_j$ for $1 \leqslant j \leqslant l$. 
Conditions (i) to (iii) are trivially satisfied, for a choice of integral coefficients in (i) thanks to the coefficients 2 in the expressions of $\sigma$ and $\tau$.

$\bullet$ {\bf Type ${\rm BC}_l$}
\cite[Chapitre VI.14, p.\! 222]{BBK-Lie-4to6}. 
This is the only type of non-reduced root systems: the system ${\rm BC}_l$ consists of the union of the system ${\rm B}_l$ and of the system ${\rm C}_l$ as described above. 
The Cartan types covered by this root system are: the remaining cases ${\rm A}_{\rm III}$ and ${\rm C}_{\rm II}$, as well as the remaining half of the cases ${\rm D}_{\rm III}$. 
More precisely, still denoting by $r$ the complex rank, we have 
\[ 
2 \rho_{{\rm A}_{\rm III}} = 2(r-2l+1)\tau+ 2 \sigma+ \kappa
\]
in the remaining cases of ${\rm A}_{\rm III}$, 
\[ 
2 \rho_{{\rm C}_{\rm II}} = 4(r-2l) \tau+ 4 \sigma+ 3 \kappa 
\]
in the remaining cases of ${\rm C}_{\rm II}$, and 
\[ 
2 \rho_{{\rm D}_{\rm III}} = 4 \tau + 4 \sigma+ \kappa 
\]
in the remaining cases of ${\rm D}_{\rm III}$. 

This type is treated by taking $\Psi = \{2 \varepsilon_j \}_{1 \leqslant j \leqslant l}$, as for the previous case. 
Conditions (i) to (iii) are trivially satisfied, for a choice of integral coefficients in (i).

$\bullet$ {\bf Type ${\rm D}_l$}
\cite[Planche IV, p.\! 256]{BBK-Lie-4to6}. 
This is a simply laced type but in fact it occurs only once in Cartan's classification, namely for ${\rm D}_{\rm I}$.
This is the root system in the standard Euclidean space ${\bf R}^l$ with canonical basis $(\varepsilon_i)_{1 \leqslant i \leqslant l}$, where the positive roots are the vectors of the form $\varepsilon_i \pm \varepsilon_j$ with $i<j$ (there are $l(l-1)$ such roots, of norm $\sqrt{2}$). 
In this situation, we have 
\[
2 \rho_{{\rm D}_{\rm I}} = \sigma = 2 \sum_{i=1}^{l-1} i \varepsilon_{l-i}. 
\]
We use the subset of positive roots $\Psi$ to be consisting of the roots $\varepsilon_{j} + \varepsilon_{j+1}$ for $1 \leqslant j \leqslant l-1$, together with the roots $\varepsilon_1 \pm \varepsilon_l$. 
Condition (ii) is checked by the fact that $\varepsilon_1$ is the average of the last two roots, and then the other canonical vectors are obtained by an easy induction. 
Condition (iii)$'$ is checked by seeing that the only pair of roots in $\Psi$ whose sum is a root is $\{\varepsilon_1-\varepsilon_l , \varepsilon_{l-1}+\varepsilon_l \}$, again by considerations of support or coefficient $\geqslant 2$. 
For condition (i), we use Lemma \ref{lemma - linear algebra}. 
If $l$ is even we have 
\[
2 \rho_{{\rm D}_{\rm I}} = 2 S + l \varepsilon_1 = 2 S + {l \over 2} \bigl( (\varepsilon_1 + \varepsilon_l) + (\varepsilon_1 - \varepsilon_l) \bigr), 
\]
and if $l$ is odd we have 
\[
2 \rho_{{\rm D}_{\rm I}} = 2 S + (l-1)(\varepsilon_1 + \varepsilon_2) + {l-1 \over 2} \bigl( (\varepsilon_1 + \varepsilon_l) + (\varepsilon_1 - \varepsilon_l) \bigr). 
\]

The rest of the root systems consists of exceptional types. 
In the case of ${\rm E}_6$, ${\rm E}_7$ and ${\rm E}_8$ the root systems are simply laced and can be realized in ${\bf R}^8$. 
We merely mention the targeted vector $\rho$, the subset $\Psi$ and the linear combination achieving (i). 

$\bullet$ {\bf Type ${\rm E}_6$}
\cite[Planche V, p.\! 260]{BBK-Lie-4to6}. It occurs in Cartan's classification for ${\rm E}_{\rm I}$ only. The underlying vector space
$V$ is the $6$-dimensional subspace $x_6 = x_7 = -x_8$ in $\mathbf R ^8$ endowed with the canonical orthonormal basis $\varepsilon _1, \dots , \varepsilon _8$. The positive roots are $\pm \varepsilon _i + \varepsilon _j$, for $1\leqslant i < j \leqslant 5$, and
$$\frac{1}{2}\bigl( \varepsilon_8 - \varepsilon _7 - \varepsilon _6 + \sum _{i=1} ^5 (-1)^{\nu (i)} \varepsilon _i \bigr),
~~~\mathrm{~~with~~}~~~ \sum _{i=1}^5 \nu (i)~~\mathrm{even}.$$
The targeted vector is 
\[
2 \rho_{{\rm E}_{\rm I}} = 2 \varepsilon_2 + 4 \varepsilon_3 + 6 \varepsilon_4 + 8 \varepsilon_5 + 8 (\varepsilon_8 - \varepsilon_7 - \varepsilon_6). 
\]
To simplify the notation we let $v := \varepsilon_8 - \varepsilon _7 - \varepsilon _6$, and we consider the positive roots
$\beta_1, \dots, \beta_5$ defined by:
\begin{itemize}
\item[] $2\beta_1 = v + \varepsilon_1 + \varepsilon_2 + \varepsilon_3 + \varepsilon_4 + \varepsilon_5$,
\item[] $2\beta_2 = v - \varepsilon_1 - \varepsilon_2 + \varepsilon_3 + \varepsilon_4 + \varepsilon_5$,
\item[] $2\beta_3 = v - \varepsilon_1 + \varepsilon_2 + \varepsilon_3 + \varepsilon_4 - \varepsilon_5$,
\item[] $2\beta_4 = v + \varepsilon_1 - \varepsilon_2 - \varepsilon_3 + \varepsilon_4 + \varepsilon_5$,
\item[] $2\beta_5 = v - \varepsilon_1 - \varepsilon_2 - \varepsilon_3 - \varepsilon_4 + \varepsilon_5$.
\end{itemize}
Then one checks easily that 
$$2 \rho_{{\rm E}_{\rm I}} = 6 \beta _1 + 2 \beta _2 + 4 \beta _3 + 2\beta _4 + 2 \beta_5 + 2(-\varepsilon _1+\varepsilon _5)
+2(\varepsilon _1 + \varepsilon _5).$$
We define $\Psi$ to be the union of $\beta_1, \dots, \beta_5$ with $\pm\varepsilon _1+\varepsilon _5$. Condition (i)
is then satisfied. To show that condition (ii) holds, one first observes that the two roots $\pm\varepsilon _1+\varepsilon _5$ 
generates $\varepsilon _1$ and $\varepsilon _5$. Moreover one has
\begin{itemize}
\item[] $\beta_2 -\beta_1 = -\varepsilon_1 - \varepsilon_2$,
\item[] $\beta_3 -\beta_1 = -\varepsilon_1 - \varepsilon_5$,
\item[] $\beta_4 -\beta_1 = - \varepsilon_2 - \varepsilon_3$,
\item[] $\beta_5 -\beta_1 = -\varepsilon_1 - \varepsilon_2 - \varepsilon_3 - \varepsilon_4$,
\end{itemize}
from which one obtains that $\Psi$ generates $\varepsilon_1, \dots, \varepsilon_5$. Condition (ii) follows easily.
Moreover $\{\beta _3, \varepsilon _1+\varepsilon _5\}$ is the only pair of roots in $\Psi$ whose sum is a root in ${\rm E}_6$. 
Thus condition (iii)$'$ is satisfied. 

$\bullet$ {\bf Type ${\rm E}_7$}
\cite[Planche VI, p.\! 266]{BBK-Lie-4to6}. 
The targeted vector is 
\[
2 \rho_{{\rm E}_{\rm V}} = 2 \varepsilon_2 + 4 \varepsilon_3 + 6 \varepsilon_4 + 8 \varepsilon_5 + 10 \varepsilon_6 - 17 \varepsilon_7 + 17 \varepsilon_8, 
\]
which we decompose as the sum of $2 \varepsilon_2 + 4 \varepsilon_3 + 6 \varepsilon_4 + 8 \varepsilon_5 + 10 \varepsilon_6$ and of $17(\varepsilon_8-\varepsilon_7)$. 
It turns out that $\varepsilon_8-\varepsilon_7$ is the longest root in the system; we take it in $\Psi$ together with the four roots $\varepsilon_2+\varepsilon_3, \varepsilon_3+\varepsilon_4, \varepsilon_4+\varepsilon_5, \varepsilon_5+\varepsilon_6$, as well as the roots $\varepsilon_6 \pm \varepsilon_1$. 
Then we have: 
\begin{align*}
2 \rho_{{\rm E}_{\rm V}} 
&= 2(\varepsilon_2+\varepsilon_3) + 2(\varepsilon_3+\varepsilon_4) + 4 (\varepsilon_4+\varepsilon_5) + 4(\varepsilon_5+\varepsilon_6)\\ &+ 3 (\varepsilon_6+\varepsilon_1) + 3(\varepsilon_6-\varepsilon_1) + 17(\varepsilon_8-\varepsilon_7).
\end{align*}
The subset $\Psi$ satisfies (i), (ii) and (iii), the latter condition being easily checked by the concrete description of ${\rm E}_7$ in terms of the canonical vectors $\varepsilon_i$.

$\bullet$ {\bf Type ${\rm E}_8$}
\cite[Planche VII, p.\! 268]{BBK-Lie-4to6}. 
The targeted vector is 
\[
2 \rho_{{\rm E}_{\rm VIII}} = 2 \varepsilon_2 + 4 \varepsilon_3 + 6 \varepsilon_4 + 8 \varepsilon_5 + 10 \varepsilon_6 + 12 \varepsilon_7 + 46 \varepsilon_8. 
\]
We use the subset $\Psi$ consisting of the roots $\varepsilon_2+\varepsilon_3$, $\varepsilon_3+\varepsilon_4$, $\varepsilon_4+\varepsilon_5$, $\varepsilon_5+\varepsilon_6$, $\varepsilon_6+\varepsilon_7$, $\varepsilon_7+\varepsilon_8$ together with 
$\varepsilon_8 \pm \varepsilon_1$. 
Then we have: 
\begin{align*}
2 \rho_{{\rm E}_{\rm V}} 
&= 2 (\varepsilon_2+\varepsilon_3) + 2 (\varepsilon_3+\varepsilon_4) + 4 (\varepsilon_4+\varepsilon_5) + 4 (\varepsilon_5+\varepsilon_6)\\ &+ 6 (\varepsilon_6+\varepsilon_7) + 6 (\varepsilon_7+\varepsilon_8) 
+ 20 (\varepsilon_8-\varepsilon_1) + 20 (\varepsilon_8+\varepsilon_1).
\end{align*}
The subset $\Psi$ satisfies (i), (ii) and (iii), the latter condition being easily checked by the concrete description of ${\rm E}_8$ in terms of the canonical vectors $\varepsilon_i$.

$\bullet$ {\bf Type ${\rm F}_4$}
\cite[Planche VIII, p.\! 272]{BBK-Lie-4to6}. 
This is a non simply laced root system corresponding to four cases in Cartan's classification, namely ${\rm E}_{\rm II}, {\rm E}_{\rm VII}, {\rm E}_{\rm IX}$ and ${\rm F}_{\rm I}$. 
The sum of the positive roots of norm $\sqrt{2}$ is $\sigma = 2 (3 \varepsilon_1 + 2 \varepsilon_2 + \varepsilon_3)$ and the sum of the positive roots of norm 1 is $\tau = 5 \varepsilon_1 + \varepsilon_2 + \varepsilon_3 + \varepsilon_4$. 
The targeted vectors are of the form $\sigma + 2^j \tau$ with $0 \leqslant j \leqslant 3$, so it is enough to treat separately $\sigma$ and $\tau$. 
We choose $\Psi = \{\varepsilon_1, \varepsilon_1+\varepsilon_2, \varepsilon_2+\varepsilon_3, \varepsilon_3+\varepsilon_4 \}$. 
Condition (ii) is clear and (iii) is checked by the concrete description of ${\rm F}_4$ in terms of the canonical vectors $\varepsilon_i$. 
For (i), we use the fact that: $\sigma = 2 (\varepsilon_1+\varepsilon_2) + 2 (\varepsilon_2+\varepsilon_3) + 4 \varepsilon_1$ and 
$\tau = (\varepsilon_1+\varepsilon_2) + (\varepsilon_3+\varepsilon_4) + 4 \varepsilon_1$.

$\bullet$ {\bf Type ${\rm G}_2$}
 \cite[Planche IX, p.\! 274]{BBK-Lie-4to6}. 
This is a root system of rank 2, corresponding to only one case in Cartan's classification, so we do not need to consider it. 
Still, we can simply say that using the root notation we have: 
 $2\rho_{\rm G} = 10 \alpha_1 + 6 \alpha_2$,
which can be written as: $2\rho_{\rm G} = \alpha_1 + 3(3\alpha_1+2\alpha_2)$.

\subsection{The type $\mathrm A_l$}
\label{ss - type A}
This type is more delicate because pairs of positive roots more often lead to a sum providing another root. 

According to \cite[Planche I, p.\! 250]{BBK-Lie-4to6}, type $\mathrm A_l$ is a simply laced root system in the subspace $V \subset \mathbf R^{l+1}$ 
defined by $\sum _{i=1} ^{l+1} x_i = 0$. The positive roots are the $\varepsilon_i - \varepsilon _j$, with $1\leqslant i<j \leqslant l+1$.

The Cartan types where it appears are the types ${\rm A}_{\rm I}$ and ${\rm A}_{\rm II}$.
One has 
$$2 \rho_{{\rm A}_{\rm I}} = l \varepsilon _1 + (l-2)\varepsilon _2 + (l-4) \varepsilon _3
+ \dots - (l-2) \varepsilon _l - l\varepsilon _{l+1},$$
and $\rho_{{\rm A}_{\rm II}} = 4 \rho_{{\rm A}_{\rm I}}$. The latter relation shows that the same root subset $\Psi$ holds for both type.
We will thus restrict ourself to the type ${\rm A}_{\rm I}$. The idea here is to write  $2 \rho_{{\rm A}_{\rm I}}$ by using 
long roots.

\begin{proposition}
$\mathrm{(1)}$ If $l$ is odd, write $l = 2k-1$ with $k \geqslant 1$.
Then: 
$$2 \rho _{{\rm A}_{\rm I}}= 2 \sum _{k<j-i} (\varepsilon _i - \varepsilon _j) 
+ \sum _{i = 1}^k (\varepsilon _i - \varepsilon _{i+k}).$$
Moreover if $\Psi$ is the set of roots that appear in the right side member, then $\Psi$ satisfies the conditions
$(i), (ii), (iii)$. 

$\mathrm{(2)}$ If $l$ is even, write $l =2k$ with $k \geqslant 1$.
Then:
$$2\rho _{{\rm A}_{\rm I}}= 
2\sum _{k<j-i<l} (\varepsilon _i - \varepsilon _j ) 
+ 2(\varepsilon _1 - \varepsilon _{k+1}) + 
2(\varepsilon _{k+1} - \varepsilon _{l+1}).$$
Moreover if $\Psi$ is the set of roots that appear in the right side member, then $\Psi$ satisfies the conditions
$(i), (ii), (iii)'$. 
\end{proposition}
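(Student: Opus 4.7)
The plan is to verify conditions (i), (ii), and (iii) or (iii)$'$ separately for each parity, each via a short combinatorial verification in the explicit coordinate realization of $\mathrm{A}_l$ recalled at the beginning of Subsection \ref{ss - type A}.

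For (i), I would match the coefficient of each $\varepsilon_m$ on both sides. The coefficient of $\varepsilon_m$ in $2\rho_{{\rm A}_{\rm I}}$ is $l - 2(m-1)$. On the right-hand side, a direct pair-counting computes the contribution of the double sum $2\sum_{k < j-i}(\varepsilon_i - \varepsilon_j)$. In the odd case $l = 2k-1$, that contribution differs from $l - 2(m-1)$ by $+1$ when $m \leqslant k$ and by $-1$ when $m \geqslant k+1$, and the auxiliary sum $\sum_{i=1}^{k}(\varepsilon_i - \varepsilon_{i+k}) = \sum_{i=1}^{k}\varepsilon_i - \sum_{i=k+1}^{2k}\varepsilon_i$ supplies precisely these corrections. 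In the even case $l = 2k$, the unrestricted double sum (with no upper bound on $j-i$) already reproduces $2\rho_{{\rm A}_{\rm I}}$; the restriction $j - i < l$ only omits the single pair $(i, j) = (1, l+1)$, and the two supplementary terms telescope to $2(\varepsilon_1 - \varepsilon_{l+1})$, restoring exactly the missing contribution.

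For (ii), I would interpret the spanning condition graph-theoretically: form the graph $\mathcal{G}$ on the vertex set $\{1, \dots, l+1\}$ with an edge $\{i, j\}$ for every root $\varepsilon_i - \varepsilon_j \in \Psi$. A spanning tree of any connected graph on $l+1$ vertices produces $l$ linearly independent vectors in $V$ (which has dimension $l$), so it suffices to show that $\mathcal{G}$ is connected. In the odd case, vertex $1$ is adjacent to each of $k+1, \dots, 2k$ and every vertex $i \in \{2, \dots, k\}$ is adjacent to $i+k$; this already makes $\mathcal{G}$ connected. In the even case, the double sum alone leaves vertex $k+1$ isolated (any root $\varepsilon_a - \varepsilon_b$ through $k+1$ with gap $\geqslant k+1$ would require an index outside $\{1, \dots, 2k+1\}$), and the two supplementary edges $\{1, k+1\}$ and $\{k+1, l+1\}$ are precisely what is needed to reconnect it to the rest of the graph.

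The crucial step is (iii) respectively (iii)$'$. In type $\mathrm{A}_l$, two distinct positive roots $\varepsilon_i - \varepsilon_j$ and $\varepsilon_{i'} - \varepsilon_{j'}$ sum to a root if and only if they \emph{chain}, i.e.\ if $j = i'$ (or symmetrically $j' = i$), and the sum is then $\varepsilon_i - \varepsilon_{j'}$, with total index gap $j' - i$ bounded above by $l$. In the odd case every root of $\Psi$ has gap at least $k$, so any chain would have total gap at least $2k > l = 2k-1$: no such chain exists and (iii) follows. In the even case, roots from the double sum have gap at least $k+1$ while the two supplementary roots have gap exactly $k$; hence any chain of two roots from $\Psi$ has total gap at least $2k+1 > l$ unless both links are supplementary, and the only such chainable pair is $\varepsilon_1 - \varepsilon_{k+1}$ followed by $\varepsilon_{k+1} - \varepsilon_{l+1}$, whose sum $\varepsilon_1 - \varepsilon_{l+1}$ is indeed a root. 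This identifies the unique offending pair and establishes (iii)$'$.
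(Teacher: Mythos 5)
Your proof is correct and follows essentially the same route as the paper's: condition (i) is checked by matching the coefficient of each $\varepsilon_m$ (the paper likewise counts occurrences of each $\varepsilon_i$ and $\varepsilon_j$ in the double sum), and (iii)/(iii)$'$ rest on the same observation that two positive roots of $\mathrm A_l$ sum to a root only when they chain, so that the index gaps add up and exceed $l$ except for the single pair $\varepsilon_1-\varepsilon_{k+1}$, $\varepsilon_{k+1}-\varepsilon_{l+1}$ in the even case. For (ii) your spanning-tree reformulation replaces the paper's explicit expression of each simple root $\varepsilon_i-\varepsilon_{i+1}$ as a $\mathbf Z$-combination of elements of $\Psi$, which is an equivalent and slightly slicker packaging. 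One sentence to tighten in the even case: the graph with vertex $k+1$ removed is not in general connected (for $l=4$ it splits into the components $\{1,4\}$ and $\{2,5\}$), so the two supplementary edges do more than ``reconnect'' $k+1$; the correct statement is that every vertex $m\neq k+1$ is adjacent either to $1$ (when $m\geqslant k+2$) or to $l+1$ (when $2\leqslant m\leqslant k$), and the edges $\{1,k+1\}$ and $\{k+1,l+1\}$ then join these two stars into a connected graph.
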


\begin{proof} 
By counting the number of times $\varepsilon _i$ and $\varepsilon _j$
appear, one has:
$$\sum _{k<j-i} \varepsilon _i - \varepsilon _j = 
\sum _{i =1}^{l-k} (l+1-k-i)\varepsilon _i
- \sum _{j = k+2}^{l+1} (j-k-1)\varepsilon _j.$$

(1) Suppose $l = 2k-1$ with $k \geqslant 1$. Then:
$$\sum _{k<j-i} \varepsilon _i - \varepsilon _j = 
\sum _{i =1}^{k-1} (k-i)\varepsilon _i
- \sum _{j = k+2}^{l+1} (j-k-1) \varepsilon _j.$$
Since $2(k - i)+1 = l-2i+2$ and $2(j-k-1) +1 = 2j-l-2$, one obtains
$$2 \sum _{k<j-i} (\varepsilon _i - \varepsilon _j) 
+ \sum _{i = 1}^k (\varepsilon _i - \varepsilon _{i+k})
= 2 \rho_{{\rm A}_{\rm I}}.$$
Thus $\Psi$ satisfies (i).
The roots contained in $\Psi$ are those of the form $\varepsilon _i - \varepsilon _j$ with $j-i \geqslant k$.
No root of the system $\mathrm A_l$ is the sum of two of them, thus (iii) holds. 
Moreover the above roots generate $V$. Indeed:
\begin{itemize}
\item for $i < k$, one has $\varepsilon _i - \varepsilon _{i+1} = (\varepsilon _i - \varepsilon _{l+1}) - (\varepsilon _{i+1} - \varepsilon _{l+1})$, with $l+1 -i \geqslant l+1 - (i+1) \geqslant k$, 
\item for $i > k$, one has 
$\varepsilon _i - \varepsilon _{i+1} = (\varepsilon _1 - \varepsilon _{i+1}) - (\varepsilon _1 - \varepsilon _i)$,
with $i+1 - 1 \geqslant i- 1 \geqslant k$, 
\item and for $i=k$: $\varepsilon _k - \varepsilon _{k+1} = 
(\varepsilon _1 - \varepsilon _{k+1}) + (\varepsilon _k - \varepsilon _{l+1}) - (\varepsilon _1 - \varepsilon _{l+1})$.
\end{itemize}
Therefore (i)-(iii) are satisfied.

(2) Suppose $l = 2k$ with $k \geqslant 1$. Then:
$$\sum _{k<j-i} \varepsilon _i - \varepsilon _j = 
\sum _{i =1}^{k} (k+1-i)\varepsilon _i
- \sum _{j = k+2}^{l+1} (j-k-1) \varepsilon _j.$$
Since $2(k +1 - i)= l-2i+2$ and $2(j-k-1)= 2j-l-2$, one obtains that 
$2 \sum _{k<j-i} (\varepsilon _i - \varepsilon _j) = 2\rho_{{\rm A}_{\rm I}}$.
Therefore 
$$2\sum _{k<j-i<l} (\varepsilon _i - \varepsilon _j ) 
+ 2(\varepsilon _1 - \varepsilon _{k+1}) + 
2(\varepsilon _{k+1} - \varepsilon _{l+1}) = 2\rho_{{\rm A}_{\rm I}}.$$
Thus $\Psi$ satisfies (i).
The roots in $\Psi$ are those of the form $\varepsilon _i - \varepsilon _j$, with $k< j-i <l$, and the roots
$\varepsilon _1 - \varepsilon _{k+1}$, $\varepsilon _{k+1} - \varepsilon _{l+1}$. Apart $\varepsilon _1 - \varepsilon _{l+1}$
which is the sum of the last ones, no root of the system $\mathrm A_l$ is the sum of two of them. Thus (iii)$'$ holds.
In addition the above roots generate $V$. Indeed:
\begin{itemize}
\item $\varepsilon _1 - \varepsilon _{l+1} = (\varepsilon _1 - \varepsilon _{k+1}) + (\varepsilon _{k+1} - \varepsilon _{l+1})$,
\item for $i < k$, one has $\varepsilon _i - \varepsilon _{i+1} = (\varepsilon _i - \varepsilon _{l+1}) - 
(\varepsilon _{i+1} - \varepsilon _{l+1})$, with $l+1 -i \geqslant l+1 - (i+1) > k$, 
\item for $i > k+1$, one has 
$\varepsilon _i - \varepsilon _{i+1} = (\varepsilon _1 - \varepsilon _{i+1}) - (\varepsilon _1 - \varepsilon _i)$,
with $i+1 - 1 \geqslant i- 1 > k$, 
\item for $i =k$: $\varepsilon _k - \varepsilon _{k+1} = (\varepsilon _k - \varepsilon _{l+1}) - 
(\varepsilon _{k+1} - \varepsilon _{l+1})$,
\item for $i = k+1$: $\varepsilon _{k+1} - \varepsilon _{k+2} = (\varepsilon _1 - \varepsilon _{k+2}) - 
(\varepsilon _1 - \varepsilon _{k+1})$.
\end{itemize}
This concludes the proof. 
\end{proof}

\subsection{Proof of non-vanishing for semisimple groups}
We can finally put things together in order to provide a proof for Theorem \ref{introduction-theorem4}, hence a proof for Theorem \ref{introduction-theorem1} of the Introduction, in view of the reduction contained in \ref{ss - ss and QI}. 

\begin{proof}[Proof of Theorem \ref{introduction-theorem4}]
By Lemma \ref{lemma - ss OK}, it suffices to exhibit a suitable subset $\Psi$ of positive roots for any semisimple group.
This can be done separately for each connected component of the Dynkin diagram, which amounts to dealing with simple real Lie groups. 
The absolutely simple cases were treated by a case-by-case analysis in Subsections \ref{ss - linear algebra} to \ref{ss - type A}. 
The remaining cases correspond to the simple non absolutely simple groups, \emph{i.e.} simple complex Lie groups seen as real groups. 
In the latter cases, the root multiplicities are all equal to 2 since the groups are split over ${\bf C}$ and the root groups are all isomorphic to the real Lie group ${\bf C}$. 
Therefore the function $\rho_G$ in this case is twice the corresponding function for the split groups over ${\bf R}$ with the same root system, showing that the same subset $\Psi$ can be chosen, up to multiplying the coefficients $m_\beta$ by $2$ for each $\beta \in \Psi$. 
\end{proof}

\section {Cohomologies of semi-direct products}
\label{s - psd} 

This section relates the de Rham $L^p$-cohomology with the group $L^p$-cohomology. Our goal
is to tranfer to the setting of de Rham $L^p$-cohomology, a result issued from \cite{BR} about the group $L^p$-cohomology 
of semi-direct products, see Corollary \ref{coho-corollary}.
It leads to the key relation (\ref{introduction-eqn}) of the introduction.

The section is also an opportunity to advertise several incarnations of $L^p$-cohomology, 
and to present their properties in a synthetic way. It collects results issued from \cite{Pa95, SaSc, BR}
(see also \cite{Elek} for related results in the discrete group case).

\subsection{Asymptotic and group $L^p$-cohomologies}\label{coho-group}
~~~ The asymptotic $L^p$-cohomology of a metric space has been defined by Pansu in \cite{Pa95}.
Let $(X, d)$ be a metric space equipped with a Borel measure $\mu$. Suppose it satisfies the following ``bounded geometry''
condition. There exist non-decreasing functions $v, V: (0, +\infty) \to (0, +\infty)$, such for every ball 
$B(x, R) \subset X$ one has 
$$v(R) \leqslant \mu \bigl(B(x,R)\bigr) \leqslant V(R).$$ 
For $R >0$ and $k\in {\bf N}$, let
$$\Delta _R^{(k)} = \{(x_0,\dots , x_k) \in X^{k+1} ~\vert~ d(x_i, x_j) \leqslant R \,\, \hbox{\rm for all $i,j$}\}.$$
Let $AS^{p,k}(X)$ be the space of (classes of) measurable functions $f: X^{k+1} \to {\bf R}$ such that for every
$R >0$ one has 
$$N_R (f)^p:= \int _{\Delta _R^{(k)}} \bigl\vert f (x_0, \dots ,x_k) \bigr\vert^p d\mu(x_0) \dots d\mu (x_k) < +\infty.$$
We equip $AS^{p,k}(X)$ with the topology induced by the set of the semi-norms $N_R$ ($R>0$).

\begin{definition}\label{semi-direct-definition1} The \emph{asymptotic $L^p$-cohomology} of $X$ is the cohomology of the complex 
$AS^{p,0}(X) \stackrel{\delta _0}{\to} AS^{p,1}(X) \stackrel{\delta _1}{\to} AS^{p,2}(X) \stackrel{\delta _2}{\to} \dots$, 
where the $\delta _k$'s are defined by:
\begin{equation}\label{coho-differential} 
(\delta _k f)(x_0, \dots , x_{k+1}) = \sum _{i =0}^{k+1} (-1)^{i} f(x_0 , \dots , \hat{x_i}, \dots , x_{k+1}).
\end{equation}
The \emph{reduced} asymptotic $L^p$-cohomology is defined similarly.
They are denoted by $L^p {\rm H}_{\mathrm{AS}}^* (X)$
and $L^p \overline{{\rm H}_{\mathrm{AS}}^*} (X)$ respectively.
\end{definition}

\begin{theorem}
\label{coho-theorem1}
\cite[Section 2]{Pa95} 
Let $X$ and $Y$ be metric spaces.
Assume that each of them admits a Borel measure with respect to which it is of bounded geometry (as defined above). 
Let $F: X \to Y$ be a quasi-isometry. 
Then $F$ induces a homotopy equivalence\footnote{All the maps occuring in homotopies are supposed to be continuous.} 
between the complexes $AS^{p,*}(Y)$ and $AS^{p,*}(X)$, 
and a canonical isomorphism of graded topological vector spaces $F^*: L^p {\rm H}_{\mathrm{AS}}^* (Y) \to L^p {\rm H}_{\mathrm{AS}}^* (X)$.
In particular $F^*$ depends only on the bounded perturbation class of $F$. The same holds in reduced cohomology.
\end{theorem}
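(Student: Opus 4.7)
The plan is to construct chain maps between $AS^{p,*}(Y)$ and $AS^{p,*}(X)$ by direct pullback and to show they are mutually inverse up to chain homotopy via an explicit simplicial prism formula. First I would produce, by measurable selection applied to maximal $1$-separated nets in $X$ and $Y$, a Borel quasi-inverse $G : Y \to X$ satisfying $d(G \circ F, \mathrm{id}_X) \leqslant C_0$ and $d(F \circ G, \mathrm{id}_Y) \leqslant C_0$; after replacing $F$ if necessary by a Borel representative in its bounded perturbation class, both $F$ and $G$ may be assumed Borel. The pullbacks
$$(F^\# f)(x_0, \ldots, x_k) = f\bigl(F(x_0), \ldots, F(x_k)\bigr), \qquad (G^\# g)(y_0, \ldots, y_k) = g\bigl(G(y_0), \ldots, G(y_k)\bigr)$$
are cochain maps by direct inspection, and the strategy is to prove they realize the claimed homotopy equivalence.

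The central analytic point is the continuity of $F^\#$ (the argument for $G^\#$ being symmetric). If $F$ is a $(\lambda, c)$-quasi-isometry, then the $(k+1)$-fold product $(F, \ldots, F)$ sends $\Delta_R^{(k)}(X)$ into $\Delta_{\lambda R + c}^{(k)}(Y)$, and any $F$-fibre above a ball $B(y, r) \subset Y$ has diameter at most $\lambda(2r + c)$, hence $\mu_X$-volume at most $V(\lambda(2r+c))$ by bounded geometry of $X$. A Fubini change of variables, performed one coordinate at a time, then yields
$$N_R(F^\# f)^p \leqslant K_R \cdot N_{\lambda R + c}(f)^p,$$
with $K_R$ depending only on $\lambda$, $c$ and on the functions $v, V$. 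This is the one place where the analytic hypotheses really enter.

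The remainder is formal simplicial topology transposed to the $L^p$ setting. For any Borel maps $\Phi_0, \Phi_1 : W \to Z$ between bounded-geometry spaces with $\sup_w d(\Phi_0(w), \Phi_1(w)) \leqslant C$, the prism operator
$$(h f)(w_0, \ldots, w_k) = \sum_{i=0}^{k} (-1)^i f\bigl(\Phi_0(w_0), \ldots, \Phi_0(w_i), \Phi_1(w_i), \ldots, \Phi_1(w_k)\bigr)$$
defines a continuous map $AS^{p,k+1}(Z) \to AS^{p,k}(W)$ satisfying the standard chain-homotopy identity $\delta h + h \delta = \Phi_1^\# - \Phi_0^\#$ (continuity follows as in the previous paragraph, since the arguments of $f$ lie in $\Delta_{R+C}^{(k+1)}(Z)$ whenever $(w_0, \ldots, w_k) \in \Delta_R^{(k)}(W)$). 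Applying this with $(\Phi_0, \Phi_1) = (\mathrm{id}_X, G \circ F)$ and $(\mathrm{id}_Y, F \circ G)$ yields the two chain homotopies $F^\# G^\# \simeq \mathrm{id}$ and $G^\# F^\# \simeq \mathrm{id}$, so $F^\#$ is a continuous chain homotopy equivalence with continuous quasi-inverse $G^\#$; it descends to an isomorphism in both unreduced and reduced cohomology. Finally, applying the same formula to two quasi-isometries $F_0, F_1 : X \to Y$ at bounded distance produces $F_1^\# - F_0^\# = \delta h + h \delta$, establishing the last claim about the bounded perturbation class. The main obstacle will be the quantitative continuity bound of the second paragraph: controlling $F$-fibre measures via a diameter-then-volume argument is the crucial input, without which the otherwise formal homotopy machinery would not transport.
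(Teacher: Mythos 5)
The paper itself does not prove Theorem \ref{coho-theorem1}; it is quoted from \cite{Pa95} with \cite{Genton, SaSc} cited for detailed proofs, so your attempt has to be measured against those arguments. Your formal skeleton --- a Borel quasi-inverse $G$, pullback cochain maps, and the simplicial prism operator giving $\delta h + h\delta = \Phi_1^\# - \Phi_0^\#$, applied to $(\mathrm{id}, G\circ F)$, $(\mathrm{id}, F\circ G)$ and to two quasi-isometries at bounded distance --- is exactly the right shape and is how those proofs conclude.

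There is, however, a genuine gap at the step you single out as ``the one place where the analytic hypotheses really enter.'' The naive pullback $(F^\# f)(x_0,\dots,x_k)=f(F(x_0),\dots,F(x_k))$ is not well defined on $AS^{p,k}(Y)$, whose elements are \emph{classes} of measurable functions modulo null sets: a quasi-isometry can push $\mu_X$ onto a $\mu_Y$-null set. Take $X=Y=\mathbf{R}$ with Lebesgue measure and $F(x)=\lfloor x\rfloor$, a quasi-isometry at bounded distance from the identity whose image $\mathbf{Z}$ is Lebesgue-null; then $f\circ F$ depends only on the values of $f$ on a null set, so it is not determined by the class of $f$, and no bound $N_R(F^\# f)^p\leqslant K_R\,N_{\lambda R+c}(f)^p$ can hold. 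Your fibre-volume estimate $\mu_X\bigl(F^{-1}(B(y,r))\bigr)\leqslant V(\lambda(2r+c))$ controls $F_*\mu_X$ locally in total mass but does not give absolute continuity with respect to $\mu_Y$, which is what the ``Fubini change of variables'' would require. The missing ingredient --- and the real content of the proofs in \cite{Pa95}, \cite{Genton}, \cite{SaSc} --- is a regularization step: either replace $F^\#$ by an averaged pullback, integrating $f$ over products of balls $B(F(x_i),r)$ against a kernel (bounded precisely thanks to the two-sided bounds $v,V$), or first show that $AS^{p,*}(X)$ retracts by deformation onto the corresponding $\ell^p$-complex of a separated net, where points have positive measure and pullback is harmless, and transport the quasi-isometry to the nets. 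Once the cochain maps are defined by one of these devices, your prism-operator argument and the bounded-perturbation statement go through essentially as written.
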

See also \cite{Genton, SaSc} for a more detailed proof.

We now turn our attention to the continuous group cohomology, 
see \cite[Chap.\! IX]{BW} or \cite{Gui} for more details. 
 
Let $G$ be a locally compact second countable group. It admits a left-invariant proper metric defining its topology, see \emph{e.g.}\!
\cite[Struble Th.\! 2.B.4]{CdH}. Let $(\pi, V)$ be a \emph{topological $G$-module} \emph{i.e.}~ a Hausdorff locally convex 
vector space over ${\bf R}$ on which $G$ acts via a continuous representation $\pi$. 
We denote by $V^G \subset V$ the subspace of $\pi(G)$-invariant vectors.
For $k \in {\bf N}$, let $C^k(G, V)$ be the space of continuous maps from $G^{k+1}$ to $V$ equipped with the compact open topology. 
Then $C^k(G,V)$ is a topological $G$-module by means of the following action:
for $g, x_0, \dots , x_k \in G$,
$$(g \cdot f) (x_0, \dots , x_k) = \pi(g) \bigl(f(g^{-1} x_0, \dots , g^{-1} x_k)\bigr).$$
Consider the following complex of invariants:
$$C^0(G,V)^G \stackrel{\delta _0}{\to} C^1(G,V)^G \stackrel{\delta _1}{\to} C^2(G,V)^G \stackrel{\delta _2}{\to} \dots $$
where the $\delta _k$'s are defined as in (\ref{coho-differential}).
The \emph{continuous cohomology of $G$ with coefficients in $(\pi ,V)$} is the cohomology of this complex, it will be denoted
by ${\rm H}_{\mathrm{ct}}^*(G, V)$. Similarly is defined the reduced cohomology $\overline{{\rm H}_{\mathrm{ct}}^*}(G, V)$.

\begin{definition}
\label{semi-direct-definition2} 
Let $\mathcal H$ be a left-invariant Haar measure on $G$. The \emph{group $L^p$-cohomology} of $G$
is the continuous cohomology of $G$, with coefficients in
the right-regular representation of $G$ on $L^p(G, \mathcal H)$, \emph{i.e.}~ the representation defined by
$$\bigl(\pi(g) u\bigr)(x) = u(xg) ~~\mathrm{~~for~~}~~ u \in L^p(G, \mathcal H) ~~\mathrm{~~and~~}~~ g, x \in G.$$
It will be denoted by ${\rm H}_{\mathrm{ct}}^*(G, L^p(G))$.
The \emph{reduced} group $L^p$-cohomology of $G$ is defined similarly and is denoted by 
$\overline{{\rm H}_{\mathrm{ct}}^*}(G, L^p(G))$. 
\end{definition}
Observe that the right-regular representation on $L^p(G, \mathcal H)$
is isometric if and only if $G$ is unimodular.
\begin{theorem}\label{coho-theorem2}\cite[Theorem 10] {SaSc}, 
\cite[Theorem 3.6]{BR}
Suppose $G$ is a locally compact second countable topological group equipped with a left-invariant proper metric.
Then the complex of invariants $C^*(G, L^p(G))^G$ is canonically homotopy equivalent to $AS^{p,*}(G)$.
In consequence, there exists a canonical isomorphism of gradued topological vector spaces 
${\rm H}_{\mathrm{ct}}^*(G, L^p(G)) \simeq L^p {\rm H}_{\mathrm{AS}}^*(G)$.
The same holds for the reduced cohomology. 
\end{theorem}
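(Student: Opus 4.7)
The plan is to exhibit a pair of continuous chain maps $\Phi: AS^{p,*}(G) \to C^*(G, L^p(G))^G$ and $\Psi$ in the reverse direction, whose compositions are continuously homotopic to the identity on each complex. Once this is done, one gets the desired canonical isomorphism in cohomology, and it automatically descends to reduced cohomology since continuous homotopy equivalences of topological cochain complexes preserve Hausdorff quotients.

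First I would construct $\Psi$ by pairing against a fixed bump function. Pick a continuous nonnegative $\chi: G \to \mathbf{R}$ with compact support and $\int_G \chi \, d\mathcal{H} = 1$; such a $\chi$ exists by local compactness and second countability of $G$. Define
\[
\Psi(f)(x_0, \ldots, x_k) := \int_G \chi(y)\, f(x_0, \ldots, x_k)(y)\, d\mathcal{H}(y),
\]
which makes sense because $f(x_0, \ldots, x_k) \in L^p(G)$ and $\chi \in L^q(G)$ with $q$ the H\"older conjugate of $p$. The key step is verifying that $\Psi(f) \in AS^{p,k}(G)$: by H\"older, $N_R(\Psi(f))^p$ is controlled by $\int_{\Delta_R^{(k)}} \|f(x_0,\ldots,x_k)\|_{L^p(G)}^p \, dx_0 \cdots dx_k$, and the $G$-equivariance of $f$ (with the modular function tracked) reduces this to an integral over a compact transversal to the diagonal $G$-orbits in $\Delta_R^{(k)}$, which is finite. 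Commutation with the coboundaries is immediate since both complexes have the same alternating-sum $\delta$.

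Going back, the natural candidate for $\Phi(f)$ is the unique $G$-equivariant extension: informally, $\Phi(f)(x_0, \ldots, x_k)(y) = f(y x_0, \ldots, y x_k)$. This formula is literally well-defined only on the subspace of $f$ whose orbital integrals $\int_G |f(y x_0, \ldots, y x_k)|^p d\mathcal{H}(y)$ are finite. To extend it to a chain map from all of $AS^{p,*}(G)$ to $C^*(G, L^p(G))^G$, I would regularize by convolving in the spatial variables against $\chi$ (or a tensor product of such bumps), which produces an honest continuous map $G^{k+1} \to L^p(G)$. To prove $\Psi \Phi \simeq \mathrm{id}$ and $\Phi \Psi \simeq \mathrm{id}$, I would use the standard cone prism: insert an extra $\chi$-weighted vertex into each simplex and apply the Leibniz-type relation for $\delta$, which expresses the difference of the compositions from the identity as $\delta h + h \delta$ for an explicit operator $h$.

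The main obstacle is the reconciliation of the two topologies --- compact-open on $C^*(G, L^p(G))^G$ versus the countable family of seminorms $(N_R)_{R>0}$ on $AS^{p,*}(G)$ --- and keeping every constructed map continuous throughout. This requires estimates based on Young- and H\"older-type inequalities, plus uniform geometric control of the intersections $\Delta_R^{(k)} \cap g \cdot \Delta_R^{(k)}$ under the diagonal $G$-action, which is available thanks to properness of the metric and the bounded-geometry hypothesis on the Haar measure. One must also carefully track the modular function of $G$ when changing variables, since the right regular representation of $G$ on $L^p(G)$ is genuinely isometric only when $G$ is unimodular; this was already noted in the paragraph following Definition \ref{semi-direct-definition2}.
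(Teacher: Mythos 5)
Note first that the paper does not actually prove Theorem \ref{coho-theorem2}: it imports it from \cite[Theorem 10]{SaSc} and \cite[Theorem 3.6]{BR}. So the only comparison available is with the standard argument of those references, whose overall architecture --- a smoothing map in each direction built from a bump function $\chi$, plus prism homotopies --- your proposal correctly reproduces. There is, however, a genuine gap at the key analytic step, namely your verification that $\Psi(f)\in AS^{p,k}(G)$. The intermediate quantity you bound $N_R(\Psi(f))^p$ by, namely $\int_{\Delta_R^{(k)}}\Vert f(x_0,\dots,x_k)\Vert_{L^p(G)}^p\,dx_0\cdots dx_k$, is \emph{infinite} for every non-compact $G$. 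Indeed, invariance of $f$ gives $f(gx_0,\dots,gx_k)=\pi(g)f(x_0,\dots,x_k)$, and $\Vert\pi(g)u\Vert_{L^p}^p=\Delta_G(g)^{-1}\Vert u\Vert_{L^p}^p$ where $\Delta_G$ is the modular function; hence along each diagonal orbit the integrand is constant up to the factor $\Delta_G(g)^{-1}$, and the fibre integral $\int_G\Delta_G(g)^{-1}\,d\mathcal H(g)$ diverges. So the reduction ``to an integral over a compact transversal'' is exactly what fails: the transversal is indeed compact, but the fibre contributes an infinite factor. The crude H\"older bound $|\Psi(f)(x_0,\dots,x_k)|\leqslant\Vert\chi\Vert_{L^q}\Vert f(x_0,\dots,x_k)\Vert_{L^p}$ discards precisely the localization that $\chi$ provides.

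The correct estimate must keep the pairing with $\chi$ inside the $L^p$-norm in the orbit direction. Writing $u=f(x_0,\dots,x_k)$, invariance gives $\Psi(f)(gx_0,\dots,gx_k)=\int_G\chi(y)\,u(yg)\,d\mathcal H(y)$, and Minkowski's integral inequality combined with left invariance of $\mathcal H$ (so that $\int_G|u(yg)|^p\,d\mathcal H(g)=\Vert u\Vert_{L^p}^p$ for every $y$) yields $\bigl(\int_G|\Psi(f)(gx_0,\dots,gx_k)|^p\,d\mathcal H(g)\bigr)^{1/p}\leqslant\Vert\chi\Vert_{L^1}\Vert u\Vert_{L^p}$. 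Only after this Young--Minkowski step does integration over the compact transversal of $\Delta_R^{(k)}$ (compact by properness of the metric) give $N_R(\Psi(f))<+\infty$. This step is the real content of the construction of $\Psi$, and as written your argument passes through a divergent quantity instead. The remainder of your scheme --- $\Phi$ as the $\chi$-smoothed equivariant extension, the prism homotopies, and the passage to reduced cohomology via continuity of the homotopies --- is the standard route and is sound in outline, though each of those steps requires the same kind of Minkowski-type care rather than the pointwise H\"older bounds you invoke.
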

We notice that the above isomorphism admits the following property. Suppose we are given an isomorphism $\varphi: G_1 \to G_2$ of
topological groups as above. It induces isomorphisms of complexes:
$$\varphi _{\mathrm{ct}}^*: C^k\bigl(G_2, L^p(G_2)\bigr)^{G_2} \to C^k\bigl(G_1, L^p(G_1)\bigr)^{G_1}$$
$$\mathrm{and}~~~~\varphi _{\mathrm{AS}}^*: AS^{p,k}(G_2) \to AS^{p,k}(G_1),$$
$$\mathrm{defined~by}~~~~\varphi _{\mathrm{ct}}^*(f)(x_0, \dots, x_k; x) = f\bigl(\varphi(x_0), \dots, \varphi(x_k); \varphi (x)\bigr),$$ 
$$\mathrm{and}~~~~\varphi _{\mathrm{AS}}^*(f)(x_0, \dots, x_k) = f\bigl(\varphi(x_0), \dots, \varphi(x_k)\bigr).$$
Then the isomorphisms ${\rm H}_{\mathrm{ct}}^*(G_i, L^p(G_i)) \simeq L^p {\rm H}_{\mathrm{AS}}^*(G_i)$, together with the induced isomorphisms
$\varphi _{\mathrm{ct}}^*: {\rm H}_{\mathrm{ct}}^*(G_2, L^p(G_2)) \to {\rm H}_{\mathrm{ct}}^*(G_1, L^p(G_1))$,
$\varphi _{\mathrm{AS}}^*: L^p {\rm H}_{\mathrm{AS}}^*(G_2) \to L^p {\rm H}_{\mathrm{AS}}^*(G_1)$, form a commutative diagram.

\subsection{Asymptotic and de Rham $L^p$-cohomologies}\label{coho-asymp}
Let $M$ be a $C^\infty$ complete Riemannian manifold. Unlike its asymptotic $L^p$-cohomology,
its simplicial $L^p$-cohomology is not invariant by quasi-isometry (e.g.\! for compact manifolds it is isomorphic to the standard de Rham 
cohomology). 
For that reason we restrict ourself to manifolds which are ``uniformly diffeomorphic to ${\bf R}^D$''. 

\begin{definition}
\label{semi-direct-definition} 
Let $B = B(0,1)$ be the unit open ball in ${\bf R}^D$.
 A manifold $M$ is said \emph{uniformly diffeomorphic to ${\bf R}^D$},
if there exist functions $\rho, \lambda: [0, +\infty) \to [1, +\infty)$, and for every $m \in M$ a $C^\infty$-diffeomorphism
$\varphi _m: B \to M$, such that
\begin{itemize}
\item for every $m \in M$ and $R >0$, there exists $r \in (\frac{1}{2}, 1)$ with 
$$B(m,R) \subset \varphi _m \bigl(B(0,r)\bigr) \subset B\bigl(m, \rho(R)\bigr),$$
\item and $\varphi _m \restr _{B(0,r)}$ is $\lambda(R)$-bi-Lipschitz.
\end{itemize}
\end{definition}

For example a Riemannian manifold which is diffeomorphic to ${\bf R}^D$, and which admits a cocompact group of isometries, is uniformly diffeomorphic to ${\bf R}^D$.

\begin{theorem}
\label{coho-theorem3}\cite{Pa95} 
Let $M$ be a $C^\infty$ Riemannian manifold which is uniformly diffeomorphic to ${\bf R}^D$.
There exists a homotopy equivalence between the 
complexes $AS^{p, *}(M)$ and $\Omega^{p,*}(M)$. It induces a canonical isomorphism of gradued topological vector spaces $L^p {\rm H}_{\mathrm{AS}}^*(M)
\simeq L^p {\rm H}_{\mathrm{dR}}^*(M)$. The same holds for reduced cohomology.
\end{theorem}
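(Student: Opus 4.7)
The plan is to factor the comparison $L^p\mathrm{H}^*_{\mathrm{AS}}(M) \simeq L^p\mathrm{H}^*_{\mathrm{dR}}(M)$ through a simplicial $L^p$-cohomology attached to a bounded-geometry triangulation. First I would use the uniform diffeomorphisms $\varphi_m: B \to M$ together with a maximal $\tfrac{1}{4}$-separated net $\{m_i\}$ in $M$ to build a simplicial complex $T$ whose vertex set is a uniform net, whose combinatorial local complexity is uniformly bounded, and all of whose simplices are uniformly bi-Lipschitz to the standard Euclidean $D$-simplex. The bi-Lipschitz control in Definition \ref{semi-direct-definition}, combined with the volume bounds $v, V$, produces the uniform constants that will power all subsequent $L^p$-estimates.

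Next I would introduce simplicial $L^p$-cochains $C^{p,*}(T)$ with the obvious $\ell^p$-norm on the uniformly bounded-degree simplicial set and compare it to $AS^{p,*}(M)$. The inclusion $C^{p,*}(T) \hookrightarrow AS^{p,*}(M)$ obtained by extending a cochain by zero off the tuples forming simplices, and a thickening/averaging map $AS^{p,*}(M) \to C^{p,*}(T)$ defined by integrating on $\prod_i B(v_i, R_0)$ for $R_0$ larger than the mesh of $T$, are mutual homotopy inverses. The homotopies are built by the standard telescoping construction between close tuples, and the required continuity for every semi-norm $N_R$ is a direct application of the bounded geometry of $T$ and of the volume control of $M$.

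For the simplicial/de Rham comparison I would use the de Rham map $I: \Omega^{p,k}(M) \to C^{p,k}(T)$ sending $\omega \mapsto \bigl(\sigma \mapsto \int_\sigma \omega\bigr)$, which intertwines $d$ with the simplicial coboundary by Stokes' formula, and the Whitney embedding $W: C^{p,k}(T) \to \Omega^{p,k}(M)$ sending a cochain to the piecewise-smooth form given simplex-wise by the classical barycentric Whitney formula. Uniform bi-Lipschitz control in the charts $\varphi_m$ translates the textbook Euclidean estimates for $I$ and $W$ on a single simplex into uniform bounds for both the $L^p$-norm of the form and the $L^p$-norm of its differential, so both $I$ and $W$ are continuous between the relevant complexes. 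One then checks $I \circ W = \mathrm{id}$ on cochains, and builds a homotopy $W \circ I \sim \mathrm{id}$ on $\Omega^{p,*}(M)$ by gluing simplex-wise Poincaré operators via a controlled partition of unity subordinate to the stars of $T$.

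The main obstacle will be to verify uniformly that all four maps respect the $\Omega^{p,*}$-norm, which involves the differential as well as the form. The delicate point is the Poincaré-type homotopy operator assembled from local pieces: one must see that the bounded geometry of $T$ and the bi-Lipschitz control $\lambda$ in Definition \ref{semi-direct-definition} yield a uniformly bounded operator on $L^p\Omega^*(M)$ whose differential also satisfies an $L^p$-bound, independently of the unbounded ambient geometry of $M$. Once this is established, the two homotopy equivalences compose to produce the desired homotopy equivalence $AS^{p,*}(M) \simeq \Omega^{p,*}(M)$ and in particular the canonical topological isomorphism $L^p\mathrm{H}^*_{\mathrm{AS}}(M) \simeq L^p\mathrm{H}^*_{\mathrm{dR}}(M)$, with the reduced statement following by passing to the largest Hausdorff quotients.
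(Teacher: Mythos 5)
Your proposal is essentially correct in outline, but it is not the route the paper takes: it is Pansu's original argument from \cite{Pa95}, which the authors explicitly set aside in favour of a ``more direct proof'' given in Appendix \ref{app - asymptotic and de Rham}. The paper works with the double complex $C^{k,\ell}$ of measurable maps $M^{\ell+1}\to\Omega^{p,k}_{\mathrm{loc}}(M)$ with the semi-norms $N_R$; the columns are contracted onto $AS^{p,*}(M)$ by transporting the Iwaniec--Lutoborski homotopy operator on the unit ball (Lemma \ref{appendix-lemma1}, from \cite{IL93}) through the uniform charts $\varphi_m$, the rows are contracted onto $\Omega^{p,*}(M)$ by averaging against the kernel $\chi_m$ of Lemma \ref{lemma-chi}, and the two retractions are glued by the double-complex collapsing Lemma \ref{appendix-lemma0}. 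What the paper's route buys is that no triangulation is ever constructed: the only local analysis needed is a single Poincar\'e lemma on $B(0,1)$ with $L^p_{\mathrm{loc}}$ control, and the smoothing is done by an integral operator rather than by Whitney forms. Your route instead requires (a) the construction of a uniformly bi-Lipschitz, uniformly locally finite triangulation from the charts of Definition \ref{semi-direct-definition}, which is standard but not free; (b) the uniform Dodziuk-type estimates for the de Rham map $I$, the Whitney map $W$, \emph{and} the glued Poincar\'e homotopy realising $W\circ I\sim\mathrm{id}$, in the full $\Omega^{p,*}$-norm (i.e.\ controlling the differential as well as the form) --- you correctly identify this as the hard analytic point, and it is precisely the step the double-complex argument circumvents. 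One detail in your second paragraph should be repaired: the map $C^{p,*}(T)\to AS^{p,*}(M)$ cannot be ``extension by zero'' off the simplicial tuples, since that does not intertwine the coboundaries and need not have finite semi-norms $N_R$ for large $R$; the correct map extends a discrete cochain to a function that is constant on products of the cells of a measurable partition of $M$ subordinate to the net (as in \cite{Genton} and \cite{SaSc}). With that correction, and granting the triangulation and the uniform Whitney estimates, your argument does yield the homotopy equivalence and hence the stated isomorphisms, including the reduced version.
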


The proof that Pansu gives in \cite{Pa95} goes through the simplicial $\ell^p$-cohomology. 
A more direct proof is presented in Appendix \ref{app - asymptotic and de Rham} below.

We notice that the isomorphism in Theorem \ref{coho-theorem3} above admits the following property.
Suppose we are given a bi-Lipschitz $C^\infty$ diffeomorphism $\varphi: M_1 \to M_2$ between Riemannian manifolds as above.
Then the isomorphisms 
$L^p {\rm H}_{\mathrm{dR}}^*(M_i) \simeq L^p {\rm H}_{\mathrm{AR}}^*(M_i)$, together with the induced isomorphisms 
$\varphi^*_{\mathrm{dR}}: L^p {\rm H}_{\mathrm{dR}}^*(M_2) \to L^p {\rm H}_{\mathrm{dR}}^*(M_1)$,
$\varphi _{\mathrm{AS}}^*: L^p {\rm H}_{\mathrm{AS}}^*(M_2) \to L^p {\rm H}_{\mathrm{AS}}^*(M_1)$,
form a commutative diagram.

\subsection{Semi-direct products}
We transfer to de Rham $L^p$-cohomology a result about group $L^p$-cohomology of semi-direct products,
see Corollary \ref{coho-corollary}. 
This leads to relation (\ref{introduction-eqn}) of the introduction.

Let $(V, \Vert \cdot \Vert _V)$ be a separable normed space, let $X$ be a locally compact second countable topological space
endowed with a Radon measure $\mu$, 
and let $p \in (1, +\infty)$. 
We denote by $L^p(X,V)$ the normed space consisting of the (classes of) mesurable maps 
$f: X \to V$ such that
$$\Vert f \Vert _{L^p(X,V)}^p := \int _X \bigl\Vert f(x) \bigr\Vert _V^p ~d\mu (x)< +\infty.$$
Let $G$ be a locally compact second countable group. Suppose it decomposes as a semi-direct product 
$G = Q \ltimes H$, with $Q$, $H$ closed subgroups and the standard multiplicative law 
$$(q_1, h_1)\cdot(q_2,h_2) = (q_1q_2, q_2^{-1}h_1q_2h_2).$$ 
Let $\mathcal H_Q$, $\mathcal H_H$ be left-invariant Haar measures on $Q$, $H$
respectively. Then $\mathcal H_G:= \mathcal H_Q \times \mathcal H_H$ is a left-invariant Haar measure on $G$. 
One has:

\begin{theorem}
\label{coho-theorem4}
\cite[Corollary 5.5]{BR} 
Let $G = Q \ltimes H$ as above.
Assume that the complex of invariants $C^*(H, L^p(H))^H$ is 
homotopically equivalent to a complex of Banach spaces. Suppose also
that there exists $n \in {\bf N}$, such that ${\rm H}_{\mathrm{ct}}^k(H, L^p(H)) = 0$ for $0 \leqslant k < n$ and such that 
${\rm H}_{\mathrm{ct}}^n(H, L^p(H))$ is Hausdorff.
Then ${\rm H}_{\mathrm{ct}}^k(G, L^p(G)) = 0$ for $0 \leqslant k < n$ and there is a linear isomorphism
\begin{equation}\label{coho-semidirect} 
{\rm H}_{\mathrm{ct}}^n \bigl(G, L^p(G)\bigr) \simeq L^p\Bigl(Q, {\rm H}_{\mathrm{ct}}^n \bigl(H, L^p(H)\bigr)\Bigr)^Q,
\end{equation}
where the $Q$-action on $L^p(Q, {\rm H}_{\mathrm{ct}}^n(H, L^p(H)))$ is by right multiplication on itself 
and by conjugacy on ${\rm H}_{\mathrm{ct}}^n(H, L^p(H))$;
in other words it is the action induced by
$$(q \cdot f)(y)( x_0,\dots , x_n; x) = f(yq)(q^{-1} x_0 q, \dots , q^{-1} x_n q; q^{-1} x q),$$
for every $q, y \in Q$, $f: Q \to C^n(H, L^p(H))$ and $x_0,\dots ,x_n, x \in H$.
\end{theorem}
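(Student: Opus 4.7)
The plan is to run a Hochschild--Serre spectral sequence for the extension $1 \to H \to G \to Q \to 1$ with coefficients in the right-regular representation of $G$ on $L^p(G)$, computing first the $H$-cohomology and then the $Q$-invariants. The first move is to identify the restricted module: via the topological decomposition $G \simeq Q \times H$ and Fubini's theorem, one obtains a $G$-equivariant Banach isomorphism $L^p(G) \simeq L^p\bigl(Q, L^p(H)\bigr)$. Reading off from the multiplication law $(q_1, h_1)(q_2, h_2) = (q_1 q_2, q_2^{-1} h_1 q_2 h_2)$, one checks that $H$ acts fibrewise by the right-regular representation on $L^p(H)$, while $Q$ acts by right translation on the $Q$-factor combined with the conjugation action on each $L^p(H)$-fiber.

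Next I would analyse ${\rm H}^q_{\mathrm{ct}}(H, L^p(G))$. The hypothesis that $C^*(H, L^p(H))^H$ is homotopy equivalent to a complex of Banach spaces is used exactly here: it allows one to commute the functor $L^p(Q, -)$ with passage to $H$-cohomology (on Banach complexes $L^p(Q,-)$ is exact, and one transports this through the homotopy equivalence). Combined with ${\rm H}^q_{\mathrm{ct}}(H, L^p(H)) = \{0\}$ for $q < n$ and the Hausdorff property at $q = n$, this yields, as $Q$-modules,
\[
{\rm H}^q_{\mathrm{ct}}\bigl(H, L^p(G)\bigr) = \{0\} \text{ for } q < n, \quad {\rm H}^n_{\mathrm{ct}}\bigl(H, L^p(G)\bigr) \simeq L^p\bigl(Q, {\rm H}^n_{\mathrm{ct}}(H, L^p(H))\bigr),
\]
the $Q$-action on the right-hand side combining right translation on $Q$ with the conjugation action on ${\rm H}^n_{\mathrm{ct}}(H, L^p(H))$.

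The Hochschild--Serre spectral sequence for continuous cohomology in the sense of \cite{BW},
\[
E_2^{p,q} = {\rm H}^p_{\mathrm{ct}}\bigl(Q, {\rm H}^q_{\mathrm{ct}}(H, L^p(G))\bigr) \Longrightarrow {\rm H}^{p+q}_{\mathrm{ct}}\bigl(G, L^p(G)\bigr),
\]
then kills every $E_2^{p,q}$ with $p+q < n$ (each has $q < n$), giving the announced vanishing; in total degree $n$ the only surviving term is $E_2^{0,n} = L^p(Q, {\rm H}^n_{\mathrm{ct}}(H, L^p(H)))^Q$, which provides the claimed linear isomorphism, with the $Q$-action exactly as displayed in the statement.

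The main obstacle will be the commutation of $L^p(Q, -)$ with cohomology in the middle step: this is not formal in the category of topological $G$-modules (where intermediate cohomologies need not even be Hausdorff), and is precisely what the Banach homotopy-equivalence hypothesis is designed to legitimize. Setting up Hochschild--Serre for continuous cohomology of a Lie-group extension with non-unitary Banach coefficients is then standard once this commutation is secured.
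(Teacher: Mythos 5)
Your outline matches the paper's own route: the paper imports this statement from \cite[Corollary 5.5]{BR} and, in the remark following Corollary \ref{coho-corollary}, describes the isomorphism as the composite of a Hochschild--Serre step ${\rm H}_{\mathrm{ct}}^n(G, L^p(G)) \simeq {\rm H}_{\mathrm{ct}}^n(H, L^p(G))^Q$ with a Fubini-type identification ${\rm H}_{\mathrm{ct}}^n(H, L^p(G)) \simeq L^p(Q, {\rm H}_{\mathrm{ct}}^n(H, L^p(H)))$, the latter being exactly where the Banach homotopy-equivalence hypothesis is used. Your proposal reproduces both steps (merely performing the Fubini commutation before, rather than after, running the spectral sequence), so it is essentially the same proof.
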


For Lie groups diffeomorphic to ${\bf R}^D$, one gets the following result which implies the relation 
(\ref{introduction-eqn}) in the introduction.

\begin{corollary}
\label{coho-corollary} 
Let $G$ be a Lie group diffeomorphic to ${\bf R}^D$. 
Suppose it decomposes as $G = Q \ltimes H$ with $Q$, $H$ closed subgroups and $H$ diffeomorphic to ${\bf R}^d$ for some $d$. 
Suppose that there exists $n \in {\bf N}$, such that $L^p{\rm H}_{\mathrm{dR}}^k(H) = 0$ for $0 \leqslant k < n$ and such that 
$L^p {\rm H}_{\mathrm{dR}}^n(H)$ is Hausdorff.
Then $L^p{\rm H}_{\mathrm{dR}}^k(G) = 0$ for $0 \leqslant k < n$ and there is a linear isomorphism
\begin{equation}
\label{coho-semidirect2} 
L^p{\rm H}_{\mathrm{dR}}^n(G) \simeq L^p\bigl(Q, L^p{\rm H}_{\mathrm{dR}}^n(H)\bigr)^Q,
\end{equation}
where the $Q$-action on $L^p(Q, L^p{\rm H}_{\mathrm{dR}}^n(H))$ is by right multiplication on itself and by conjugacy on $L^p{\rm H}_{\mathrm{dR}}^n(H)$;
in other words, it is induced by the action 
$$(q \cdot f)(y) = C^*_{q^{-1}} \bigl(f(yq)\bigr),$$
for every $q, y \in Q$, $f: Q \to \Omega^{p,n}(H)$.
\end{corollary}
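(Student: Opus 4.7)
The strategy is to transport Theorem \ref{coho-theorem4} through the two comparison theorems \ref{coho-theorem2} (group vs.\ asymptotic) and \ref{coho-theorem3} (asymptotic vs.\ de Rham). Equip $G$ with a left-invariant Riemannian metric, and $H$ with the restricted metric (which is again left-invariant for $H$). Since $G$ and $H$ are connected Lie groups diffeomorphic to Euclidean spaces, they admit cocompact groups of isometries (namely themselves acting by left translations), hence are uniformly diffeomorphic to $\mathbf{R}^D$ and $\mathbf{R}^d$ in the sense of Definition \ref{semi-direct-definition}. So Theorem \ref{coho-theorem3} yields canonical topological isomorphisms $L^p{\rm H}_{\mathrm{dR}}^*(G) \simeq L^p{\rm H}_{\mathrm{AS}}^*(G)$ and $L^p{\rm H}_{\mathrm{dR}}^*(H) \simeq L^p{\rm H}_{\mathrm{AS}}^*(H)$, and Theorem \ref{coho-theorem2} then identifies these with ${\rm H}_{\mathrm{ct}}^*(G,L^p(G))$ and ${\rm H}_{\mathrm{ct}}^*(H,L^p(H))$ respectively.

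Next, I would check the hypotheses of Theorem \ref{coho-theorem4}. Through the above chain of isomorphisms, the vanishing $L^p{\rm H}_{\mathrm{dR}}^k(H)=0$ for $k<n$ and the Hausdorff property at degree $n$ transfer directly to ${\rm H}_{\mathrm{ct}}^k(H,L^p(H))$. The required homotopy equivalence of $C^*(H,L^p(H))^H$ with a complex of Banach spaces is obtained by composing the two homotopy equivalences supplied by Theorems \ref{coho-theorem2} and \ref{coho-theorem3}: we reach $\Omega^{p,*}(H)$, which is by construction a complex of Banach spaces. Applying Theorem \ref{coho-theorem4} produces the vanishings ${\rm H}_{\mathrm{ct}}^k(G,L^p(G))=0$ for $k<n$ and the isomorphism
\[
{\rm H}_{\mathrm{ct}}^n\bigl(G,L^p(G)\bigr) \simeq L^p\bigl(Q, {\rm H}_{\mathrm{ct}}^n(H,L^p(H))\bigr)^Q.
\]

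It remains to rewrite the right-hand side in de Rham terms, and this is the step where the main care is needed. For every $q\in Q$ the conjugation $C_q\colon H\to H$ is a smooth Lie group automorphism; since it fixes the identity and since $H$ has a left-invariant Riemannian metric, $C_q$ is bi-Lipschitz (the bi-Lipschitz constant being controlled by $\mathrm{Ad}(q)$). The naturality statements recalled right after Theorems \ref{coho-theorem2} and \ref{coho-theorem3} apply to the isomorphism ${\rm H}_{\mathrm{ct}}^n(H,L^p(H))\simeq L^p{\rm H}_{\mathrm{dR}}^n(H)$, so the $Q$-action by conjugation on the left translates, under this isomorphism, into the action $q \mapsto C_{q^{-1}}^*$ on $L^p{\rm H}_{\mathrm{dR}}^n(H)$. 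Combining with the right-translation of $Q$ on itself gives exactly the $Q$-action described in the statement, and the two $Q$-modules $L^p(Q,{\rm H}_{\mathrm{ct}}^n(H,L^p(H)))$ and $L^p(Q,L^p{\rm H}_{\mathrm{dR}}^n(H))$ are identified; passing to $Q$-invariants yields (\ref{coho-semidirect2}).

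\textbf{Main obstacle.} The heart of the argument is the equivariance verification in the last paragraph: one has to chase the $Q$-action through three successive identifications (continuous cochains, asymptotic cochains, de Rham forms) and check that at each stage the action induced by conjugation on $H$ is the natural one. This is where the naturality remarks appended to Theorems \ref{coho-theorem2} and \ref{coho-theorem3} are essential, and one must also ensure that the $L^p$-integrability condition defining the space $L^p(Q,\cdot)^Q$ is preserved under the comparison, which follows because the identifications are topological isomorphisms depending continuously on the bi-Lipschitz data of $C_q$.
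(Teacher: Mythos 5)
Your proposal is correct and follows essentially the same route as the paper: reduce to Theorem \ref{coho-theorem4} via the homotopy equivalences of Theorems \ref{coho-theorem2} and \ref{coho-theorem3} (using homogeneity to get the uniform diffeomorphism to Euclidean space), and then track the $Q$-action through the comparison isomorphisms using the naturality remarks stated after those theorems. The extra observation that $C_q$ is a bi-Lipschitz smooth automorphism of $H$ (so that the functoriality of Theorem \ref{coho-theorem3} applies) is a useful detail the paper leaves implicit.
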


\begin{proof}
Since $G$ and $H$ are homogeneous and respectively diffeomorphic to ${\bf R}^D$ and ${\bf R}^d$, they are uniformly diffeomorphic to ${\bf R}^D$ and ${\bf R}^d$.
By applying successively Theorems \ref{coho-theorem2} and \ref{coho-theorem3}, one sees that the complexes $C^*(G, L^p(G))^G$ and $\Omega^{p,*}(G)$ are homotopy equivalent. 
The same holds for $C^*(H, L^p(H))^H$ and $\Omega^{p,*}(H)$. 
In particular $C^*(H, L^p(H))^H$ is homotopy equivalent to a complex of Banach spaces, and thus Theorem \ref{coho-theorem4} applies. 

It remains to relate the expressions of the $Q$-action on ${\rm H}_{\mathrm{ct}}^n(H, L^p(H))$ and on $L^p{\rm H}_{\mathrm{dR}}^n(H)$. 
For that we need to find the expressions the conjugacy $C_q^*: L^p{\rm H}_{\mathrm{dR}}^n(H) \to L^p{\rm H}_{\mathrm{dR}}^n(H)$ by $q \in Q$, when transformed by the successive isomorphisms
$$ L^p{\rm H}_{\mathrm{dR}}^n(H) \simeq L^p{\rm H}_{\mathrm{AS}}^n(H) \simeq {\rm H}_{\mathrm{ct}}^n \bigl(R, L^p(H)\bigr).$$
It is done by using their functional properties described right after Theorems \ref{coho-theorem3} and 
\ref{coho-theorem2}.
\end{proof}

\begin{remark} 
The linear isomorphism (\ref{coho-semidirect}) in Theorem \ref{coho-theorem4} is the composition of the following two isomorphisms
$${\rm H}_{\mathrm{ct}}^n \bigl(G, L^p(G)\bigr) 
\stackrel{\varphi_1}{\simeq} 
{\rm H}_{\mathrm{ct}}^n \bigl(H, L^p(G)\bigr)^Q 
\stackrel{\varphi_2}{\simeq}
L^p\Bigl(Q, {\rm H}_{\mathrm{ct}}^n \bigl(H, L^p(H)\bigr)\Bigr)^Q.$$
The second one is a topological isomorphism \cite[Proposition 5.2]{BR}; it comes from a Fubini type argument. 
The first one comes from the Hochschild-Serre spectral sequence \cite[Theorem IX.4.3]{BW}.
When $G$ is countable, the continuous cohomology ${\rm H}_{\mathrm{ct}}^n(G, L^p(G))$ coincides with the standard
one ${\rm H}^n(G, \ell^p(G))$. 
In this case it is known that $\varphi _1$ equals the restriction map \cite[Theorem III.2]{HS}. 
We suspect that the equality between $\varphi _1$ and the restriction map holds in general under the assumptions of Theorem \ref{coho-theorem4}. 
Since the restriction map is continuous, this would imply that ${\rm H}_{\mathrm{ct}}^n(G, L^p(G))$ is Hausdorff and that the isomorphisms (\ref{coho-semidirect}) and (\ref{coho-semidirect2}) are canonical and Banach.
\end{remark}

%

\appendix

\section{On asymptotic and de Rham $L^p$-cohomology}
\label{app - asymptotic and de Rham} 
The goal of the appendix is to give a direct proof of Pansu's Theorem \ref{coho-theorem3}, that we restate below for commodity (see Definition \ref{semi-direct-definition} for the notion of \emph{uniformly diffeomorphic to ${\bf R}^D$}):

\begin{theorem}
\label{appendix-theorem} 
Let $M$ be a $C^\infty$ Riemannian manifold which is uniformly diffeomorphic to ${\bf R}^D$.
There exists a homotopy equivalence\footnote{Recall that all the maps occuring in homotopies are supposed to be continuous.} between the 
complexes $AS^{p, *}(M)$ and $\Omega^{p,*}(M)$. 
It induces a canonical isomorphism of gradued topological vector spaces $L^p {\rm H}_{\mathrm{AS}}^*(M)
\simeq L^p {\rm H}_{\mathrm{dR}}^*(M)$. The same holds for reduced cohomology.
\end{theorem}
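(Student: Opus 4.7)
The plan is to construct explicit continuous chain maps
\[ I: \Omega^{p,*}(M) \to AS^{p,*}(M) \quad \text{and} \quad J: AS^{p,*}(M) \to \Omega^{p,*}(M), \]
together with continuous chain homotopies showing $J \circ I \sim \mathrm{id}$ and $I \circ J \sim \mathrm{id}$. A chain homotopy equivalence at the complex level yields at once an isomorphism on unreduced cohomology; continuity of every operator involved then transfers the result to reduced cohomology.

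For the integration map $I$, I would use the uniform charts $\varphi_m : B \to M$ to attach to each tuple $(x_0, \dots, x_k) \in \Delta_R^{(k)}$ a smooth singular $k$-simplex $\sigma(x_0, \dots, x_k)$ with those vertices: when the $x_i$'s all lie in a common image $\varphi_m(B)$, push forward the affine simplex in $B$ with vertices $\varphi_m^{-1}(x_i)$; for larger $R$, iterate this through a bounded number of overlapping charts. Setting $I(\omega)(x_0, \dots, x_k) := \int_{\sigma(x_0, \dots, x_k)} \omega$, boundedness as an operator $L^p\Omega^k(M) \to AS^{p,k}(M)$ for each semi-norm $N_R$ follows from Fubini and the Jacobian control provided by $\lambda(R), \rho(R)$, and Stokes' theorem gives the chain identity $I \circ d = \delta \circ I$. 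For the smoothing map $J$, pick a uniformly locally finite cover $\{U_\alpha = B(m_\alpha, 1)\}$ and a smooth partition of unity $\{\chi_\alpha\}$ of uniformly bounded multiplicity with $\Vert d\chi_\alpha \Vert_\infty$ uniformly bounded (this exists by bounded geometry), and set
\[ J(f) := \sum_{\alpha_0, \dots, \alpha_k} f(m_{\alpha_0}, \dots, m_{\alpha_k}) \, \chi_{\alpha_0} \, d\chi_{\alpha_1} \wedge \cdots \wedge d\chi_{\alpha_k}, \]
the classical de~Rham--Weil/Whitney map. Boundedness $AS^{p,k}(M) \to \Omega^{p,k}(M)$ follows from the bounded multiplicity, and the chain property follows from $\sum_\alpha d\chi_\alpha = 0$.

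The homotopy $J \circ I \sim \mathrm{id}$ is produced by a Whitney-style prism operator $H : \Omega^{p,k}(M) \to \Omega^{p,k-1}(M)$, obtained by integrating $\omega$ over a one-parameter family of $(k+1)$-simplices that interpolates between the affine simplex and the "Whitney simplex" whose vertices are nearby centers $m_\alpha$; the analogous construction at the level of tuples of points takes care of $I \circ J \sim \mathrm{id}$. The main obstacle throughout is \emph{uniformity}: ensuring that the simplices $\sigma$, the partition of unity $\{\chi_\alpha\}$, and the homotopy operators all carry bounds which are uniform across $M$ and across the spread parameter $R$. This is precisely what the hypothesis "uniformly diffeomorphic to $\mathbf{R}^D$" is designed to provide: the control functions $\rho$ and $\lambda$ convert pointwise geometric estimates on individual charts into operator norms controlling every semi-norm $N_R$, which is the quantitative input required to close the chain homotopy equivalence.
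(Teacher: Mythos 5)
Your strategy---explicit integration and Whitney maps $I$, $J$ plus prism homotopies, i.e.\ the de~Rham--Weil route through simplicial-type cochains---is essentially the path Pansu originally took via simplicial $\ell^p$-cohomology, which the appendix is explicitly written to avoid. The paper instead forms a double complex $C^{k,\ell}$ of measurable maps $M^{\ell+1}\to\Omega^{p,k}_{\mathrm{loc}}(M)$ whose two augmentation rows/columns are $AS^{p,*}(M)$ and $\Omega^{p,*}(M)$, contracts the columns using the Iwaniec--Lutoborski homotopy operator on the unit ball transported by the uniform charts $\varphi_m$, contracts the rows by averaging against a uniformly bounded kernel $\chi_m$, and concludes by the general double-complex lemma; no explicit pair of mutually quasi-inverse maps between the two complexes is ever produced. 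As written, your proposal has genuine gaps. The most serious is that $J$ is not defined on $AS^{p,k}(M)$: its elements are equivalence classes of measurable functions on $M^{k+1}$, so the point values $f(m_{\alpha_0},\dots,m_{\alpha_k})$ are meaningless, and even for a continuous representative such values are not controlled by the semi-norms $N_R$, which are $L^p$ integrals over neighbourhoods of the diagonal; point evaluation must be replaced by averaging against bump functions with uniform bounds (this is exactly what the paper's kernel $\chi_m$ accomplishes). A parallel issue affects $I$: an element of $\Omega^{p,k}(M)$ is only an $L^p$ form, whose restriction to a single $k$-simplex is undefined, so $\int_{\sigma(x_0,\dots,x_k)}\omega$ and the Stokes identity $\delta\circ I=I\circ d$ require a prior regularization or a density argument; and the face-compatibility of your simplices $\sigma(x_0,\dots,x_k)$ across different charts---without which Stokes does not produce the simplicial coboundary---is asserted but not constructed. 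The phrase ``iterate this through a bounded number of overlapping charts'' is precisely where the difficulty lives.

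Finally, the two chain homotopies are the heart of the argument, and your ``Whitney-style prism operator'' is named rather than built. Establishing that the integration and Whitney maps are homotopy inverses with operator bounds uniform over $M$ and over the spread parameter $R$ is a theorem of Dodziuk type, and in the $L^p$ bounded-geometry setting it carries real analytic content (comparable in weight to the whole of the paper's appendix). So while your route can in principle be completed---and would yield the same conclusion---the proposal as it stands does not close: fix the definition of $J$ by averaging, justify the restriction and Stokes steps for $I$ on $L^p$ forms, give a genuine construction of compatible simplices, and supply the homotopy operators with uniform norm control. Alternatively, adopt the double-complex scheme, which trades all of these compatibility problems for two local contraction lemmas.
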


The following standard notion will serve repeatedly in the sequel.

\begin{definition} Let $(A^*, d)$ be a complex of topological vector spaces, let $(B^*, d) \subset (A^*, d)$ be a subcomplex, and let denote
by $i: B^* \to A^*$ the inclusion map. One says that $A^*$ \textit{retracts by deformation onto} $B^*$, if there exists a continuous linear
map $r: A^* \to B^*$ such that $r \circ d = d \circ r$, $r \circ i = \mathrm{id}$, and $i \circ r$ is homotopic to $\mathrm{id}$.
\end{definition} 

When $A^*$ retracts by deformation onto $B^*$, the inclusion map $i: B^* \to A^*$ induces canonically isomorphisms 
of gradued topological vector spaces
$\mathrm{H}^*(B^*) \simeq \mathrm{H}^*(A^*)$ and $\overline{\mathrm{H}^*}(B^*) \simeq \overline{\mathrm{H}^*}(A^*)$.

The method for proving Theorem \ref{appendix-theorem} is a variant of the double complex proof 
of the isomorphism between de Rham and Cech cohomologies
(see \emph{e.g.}\! \cite[Theorem 8.9 and Proposition 9.5]{BT82}). This method is used in \cite{Pa95} to show the equivalence between the
de Rham and the simplicial
$L^p$-cohomologies. It is based on the following general lemma:

\begin{lemma}\label{appendix-lemma0}
Let $(C^{k, \ell}, d', d'')_{(k,\ell) \in \mathbf N^2}$ be a double complex of topological vector spaces, with 
$d'\circ d'' +d''\circ d' =0$. Suppose that for every $\ell \in \mathbf N$,
the complex $(C^{*, \ell}, d')$ retracts by deformation onto the subcomplex that is null in degrees at least $1$ and equal
to $E^\ell:= \Ker d'\vert _{C^{0,\ell}}$ in degree $0$. Then the complex $(D^*, d_D)$, defined
by $D^m = \bigoplus _{k + \ell = m} C^{k, \ell}$ and $d_D = d' + d''$, retracts by deformation onto its subcomplex $(E^*, d'')$.
\end{lemma}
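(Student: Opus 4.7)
The plan is to realize the desired deformation retract as an instance of the classical homological perturbation lemma, applied column-wise. For each $\ell$, the hypothesis supplies a continuous retraction $r_\ell : C^{0,\ell}\to E^\ell$, an inclusion $i_\ell : E^\ell\hookrightarrow C^{0,\ell}$, and a continuous homotopy $h_\ell : C^{k,\ell}\to C^{k-1,\ell}$ (with the convention $h_\ell = 0$ on $C^{0,\ell}$, since $C^{-1,\ell} = 0$) satisfying $d' h_\ell + h_\ell d' = i_\ell r_\ell - \mathrm{id}$ on the full column, together with $r_\ell i_\ell = \mathrm{id}$. I will assemble these column-wise data into global continuous operators $r$, $\iota$, $h$ on $D^*$, and the idempotent $e := \iota\, r$ (which vanishes outside $C^{0,*}$). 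These are linked by the single identity $d'h + hd' = e - \mathrm{id}_{D}$; moreover $[d', e] = 0$, $h\iota = 0$ and $r\iota = \mathrm{id}_{E^*}$, while the anticommutation $d'd'' + d''d' = 0$ ensures that $d''$ preserves $E^* = \Ker(d'\vert_{C^{0,*}})$, so that $(E^*, d'')$ is a genuine subcomplex of $(D^*, d_D)$.

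The key observation is that the composition $hd''$ has bidegree $(-1,1)$ and, the bicomplex being first-quadrant, its iterates vanish on $C^{k,\ell}$ as soon as the number of iterations exceeds $k$. Consequently the formal series
\[
\Sigma \; := \; \sum_{n\geq 0} (-hd'')^n
\]
defines a continuous linear endomorphism of $D^*$, whose restriction to each bidegree is a finite composition (of at most $k+1$ terms on $C^{k,\ell}$) of the continuous operators $h$ and $d''$. I will then propose as candidates for the retraction and for the contracting homotopy the operators
\[
R \; := \; r \circ \Sigma \,:\, D^* \longrightarrow E^*,
\qquad
H \; := \; h \circ \Sigma \,:\, D^* \longrightarrow D^{*-1},
\]
with $\iota$ itself serving as the inclusion. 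Note that $(-hd'')^n \iota = 0$ for $n\geq 1$ (since $d''\iota$ lands in $C^{0,*}$ where $h$ vanishes), so $\Sigma \iota = \iota$ and $R\iota = r\iota = \mathrm{id}_{E^*}$ automatically.

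It will remain to verify that $R$ is a chain map from $(D^*,d_D)$ to $(E^*, d'')$ and that $\iota R - \mathrm{id}_{D^*} = d_D H + H d_D$. This reduces to a purely algebraic manipulation using only the identities $d'h + hd' = e - \mathrm{id}$, $d'd'' + d''d' = 0$, $h\iota = 0$, $r\iota = \mathrm{id}$, the telescoping relation $(1 + hd'')\Sigma = \mathrm{id}$, and the vanishings $d'e = ed' = 0$ and $d'r = rd' = 0$. The main --- and essentially only --- obstacle will be the combinatorial bookkeeping of signs and of which summands in $\Sigma$ survive on each bidegree; no analytical subtlety arises, and continuity of $R$ and $H$ is inherited from that of $r$, $h$ and $d''$ since each formula is, on each bidegree, a finite composition of continuous operators. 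In essence this is precisely the classical homological perturbation lemma applied to the column-wise deformation retract $(E^\ell, 0) \rightleftarrows (C^{*,\ell}, d')$ with perturbation $d''$, the pointwise nilpotency of $hd''$ guaranteeing convergence of all series involved.
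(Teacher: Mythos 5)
Your overall strategy -- assembling the column-wise contraction data and running the homological perturbation lemma with perturbation $d''$, the bidegree $(-1,1)$ operator being nilpotent on each $C^{k,\ell}$ so that all series are finite -- is the right kind of argument, and is essentially what the references cited by the paper (Pansu, Sequeira) do; the paper itself gives no proof. But your explicit formula for the retraction is incorrect: with $\Sigma=\sum_{n\geqslant 0}(-hd'')^n$ and $R=r\circ\Sigma$, the map $R$ is \emph{not} a chain map, so the ``purely algebraic manipulation'' you defer would not go through. The failure appears already in the lowest bidegree. For $x\in C^{0,\ell}$ one has $R(d_Dx)=r(d''x)-rhd''(d'x)$, and the second term vanishes: $rhd''d'=-r(hd')d''=-r(e-\mathrm{id})d''=0$, using $hd'=e-\mathrm{id}$ on $C^{0,*}$ (where $d'h=0$) and $re=r$. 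Hence $R(d_Dx)=rd''x$, while $d''R(x)=d''r(x)$; the discrepancy is $rd''(\mathrm{id}-\iota r)=-rd''hd'$, which is nonzero for a generic two-column example. The culprit is the order (and, for your sign convention $d'h+hd'=\iota r-\mathrm{id}$, also the sign) of $h$ and $d''$ in the series: the transferred projection must be $R=r\circ\sum_{n\geqslant 0}(d''h)^n$, i.e. $r(d''h)^k$ on $C^{k,\ell}$ -- the usual zig-zag that first descends along the column by $h$ and only then moves by $d''$, so that the homotopy identity $hd'=e-\mathrm{id}-d'h$ can act on the incoming element. For that corrected $R$ one finds $Rd_D-d''R=(Rd_D-d''R)\circ d''h$, whence $Rd_D=d''R$ by nilpotency, and $R\iota=\mathrm{id}$ since $(d''h)\iota=d''(h\iota)=0$.

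A second point you leave unaddressed: the basic perturbation lemma in its standard form requires the side conditions $h\iota=0$, $rh=0$, $h^2=0$. Only the first is automatic here (because $h$ vanishes on $C^{0,*}$); the other two need not hold and you neither verify nor arrange them (which can always be done, e.g. by replacing $h$ with $(\iota r-\mathrm{id})h(\iota r-\mathrm{id})$ and then with $hd'h$). As it happens, the chain-map property and $R\iota=\mathrm{id}$ for the corrected $R$ go through without side conditions, but the verification that a specific $H$ (say $h\sum_{n\geqslant 0}(d''h)^n$) realizes the homotopy $\iota R-\mathrm{id}=d_DH+Hd_D$ is exactly where the side conditions (or extra correction terms) enter, and that is precisely the part of the argument you have not carried out.
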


This kind of result is well-known from the specialists (see \emph{e.g.}\! \cite[Proposition 9.5 and Remark p.\! 104]{BT82}). 
It is stated in this form in \cite[Lemme 5]{Pa95}, with a sketch of proof.
A detailed proof appears in \cite[Lemma 2.2.1]{Sequeira}.

\subsection{Definition of the double complex $C^{*,*}$}

Let $M$ be a $C^\infty$ complete Riemannian manifold. We denote by $\Omega^{p,k} _{\mathrm{loc}}(M)$ the space of (measurable) $k$-differential forms on $M$
that belong to $\Omega^{p,k} (U)$ for every relatively compact open subset $U \subset M$. 
The $\Omega^{p,k}$-norm of the restriction of $\omega$ to $U$ is denoted by $\Vert \omega \Vert _{U}$ for simplicity.
Equipped with the set of semi-norms $\Vert \cdot \Vert _{U}$, where $U \subset M$ is open relatively compact, 
the space $\Omega^{p,k} _{\mathrm{loc}}(M)$
is a separable Fr\'echet space. 

For every couple $(k, \ell) \in \mathbf N^2$, every $R>0$ and every measurable map $f: M^{\ell +1} \to \Omega^{p,k} _{\mathrm{loc}}(M)$,
we define the semi-norm $N_R (f)$ by
$$N_R (f)^p = \int _{\Delta _R^{(\ell)}} \bigl\Vert f(m_0,\dots , m_{\ell}) \bigr\Vert _{B(m_0,R)}^p 
d\mathrm{vol}^{\ell +1}(m_0,\dots , m_{\ell}),$$
where $\mathrm{vol}^{\ell +1}$ denotes the product measure on $M^{\ell +1}$. We remark that changing $B(m_0, R)$ by $B(m_i,R)$
in the definition of $N_R(f)$, leads to an equivalent family of semi-norms; indeed one has $B(m_i,R) \subset B(m_j, 2R)$
when $(m_0, \dots, m_{\ell}) \in \Delta _R^{(\ell)}$.

Let $C^{k, \ell}$ be the topological vector spaces of (the classes of) the measurable maps 
$f: M^{\ell +1} \to \Omega^{p,k} _{\mathrm{loc}}(M)$
such that $N_R (f) < +\infty$ for every $R>0$.
We define $d': C^{k, \ell} \to C^{k+1, \ell}$ and $d'': C^{k, \ell} \to C^{k, \ell +1}$ by
$$d'f = (-1)^\ell d \circ f ~~~\mathrm{~~and~~}~~~ d''f = \delta f,$$
where $d$ is the de Rham differential operator and $\delta$ is the discrete operator defined in (\ref{coho-differential}).
Then $C^{*,*} = (C^{k, \ell}, d', d'')_{(k, \ell) \in \mathbf N^2}$ is a double complex of topological vector spaces. 
It satisfies $d' \circ d'' + d'' \circ d' =0$.

The complex $C^{*,*}$ interpolates between the complexes $AS^{p,*}(M)$ and $\Omega^{p, *}$. Indeed:
\begin{proposition}\label{appendix-proposition1}
There are canonical isomorphisms of topological complexes
$\Ker d' \restr _{C^{0, *}} \simeq AS^{p, *}(M)$ and $\Ker d'' \restr _{C^{*, 0}} \simeq \Omega^{p, *}(M)$.
\end{proposition}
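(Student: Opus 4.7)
The plan is to construct explicit mutually inverse maps in each case and verify they are topological isomorphisms of complexes. In both isomorphisms the kernel condition collapses the relevant variable to a constant, and bounded geometry (a consequence of uniform diffeomorphism to $\mathbf R^D$) converts the local semi-norms $N_R$ on $C^{k, \ell}$ into the standard norms defining $AS^{p,*}(M)$ or $\Omega^{p,*}(M)$.

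For the first isomorphism, to $g \in AS^{p, \ell}(M)$ I would associate the map $\Phi(g) \colon M^{\ell +1} \to L^p_{\mathrm{loc}}(M) = \Omega^{p,0}_{\mathrm{loc}}(M)$ sending $(m_0, \dots, m_\ell)$ to the constant function with value $g(m_0, \dots, m_\ell)$. Since $d$ annihilates constants, $\Phi(g)$ lies in $\Ker d' \restr _{C^{0, \ell}}$. Conversely, given $f \in \Ker d' \restr _{C^{0,\ell}}$, the current derivative of $f(m_0, \dots, m_\ell)$ vanishes for almost every $(m_0, \dots, m_\ell)$, so Lemma \ref{omega-lemma} and the connectedness of $M$ (which is diffeomorphic to $\mathbf R^D$) force $f(m_0, \dots, m_\ell)$ to be almost everywhere equal to a real constant, defining a measurable function $g \colon M^{\ell+1} \to \mathbf R$. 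Direct computation yields
\[
N_R\bigl(\Phi(g)\bigr)^p = \int_{\Delta_R^{(\ell)}} \bigl| g(m_0, \dots, m_\ell) \bigr|^p \, \mathrm{vol}\bigl( B(m_0, R) \bigr) \, d\mathrm{vol}^{\ell+1},
\]
and bounded geometry, namely $v(R) \leqslant \mathrm{vol}(B(m_0, R)) \leqslant V(R)$, shows that this semi-norm is uniformly equivalent to the $AS^{p, \ell}$ semi-norm of $g$. The differential $d''$ on $C^{0, *}$ is by definition the discrete operator $\delta$, matching the differential on $AS^{p,*}(M)$, so $\Phi$ is an isomorphism of topological complexes.

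For the second isomorphism, to $\omega \in \Omega^{p, k}(M)$ I would associate the constant map $\Phi'(\omega) \colon M \to \Omega^{p, k}_{\mathrm{loc}}(M)$, $m \mapsto \omega$; then $d'' \Phi'(\omega) = 0$ since the discrete differential of a constant map vanishes. Conversely, any $f \in \Ker d'' \restr _{C^{k, 0}}$ satisfies $f(m_1) = f(m_0)$ for almost all pairs $(m_0, m_1)$, so a standard Fubini argument produces a single form $\omega \in \Omega^{p, k}_{\mathrm{loc}}(M)$ with $f \equiv \omega$ almost everywhere. To match norms I would unroll $\Vert \omega \Vert_{\Omega^{p,k}, B(m, R)}^p$ as equivalent (up to constants depending only on $p$) to $\int_{B(m,R)} |\omega|^p \, d\mathrm{vol} + \int_{B(m,R)} |d\omega|^p \, d\mathrm{vol}$, apply Fubini to exchange the order of integration, and use bounded geometry to bracket $N_R(\Phi'(\omega))^p$ between positive multiples of $\Vert \omega \Vert_{\Omega^{p,k}}^p$. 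Hence $\Phi'(\omega) \in C^{k, 0}$ if and only if $\omega \in \Omega^{p, k}(M)$, and the topologies coincide. The remaining differential $d'$ on $C^{*, 0}$ is $(-1)^0 d \circ f = d \circ f$, which under $\Phi'$ corresponds exactly to the de Rham differential on $\Omega^{p, *}(M)$.

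The only mild technicalities will be the measurability of $g$ extracted from $f$ in the first case and the Fubini argument producing $\omega$ in the second; both follow from the fact that integration against a compactly supported smooth form is a continuous linear functional on $\Omega^{p, k}_{\mathrm{loc}}(M)$. The norm equivalences themselves are routine consequences of bounded geometry together with the comparability of $(a+b)^p$ and $a^p + b^p$.
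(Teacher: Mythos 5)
Your argument is correct and follows essentially the same route as the paper: identify $\Ker d'\restr_{C^{0,*}}$ with constant-function-valued maps and $\Ker d''\restr_{C^{*,0}}$ with constant maps, then match the semi-norms $N_R$ with the $AS^{p,*}$ and $\Omega^{p,*}$ norms via Fubini and the bounded-geometry bounds $v(R)\leqslant \mathrm{vol}(B(m,R))\leqslant V(R)$. The only quibble is the identification $L^p_{\mathrm{loc}}(M)=\Omega^{p,0}_{\mathrm{loc}}(M)$ (the latter also requires the differential to be locally $L^p$), but this is harmless since constant functions lie in $\Omega^{p,0}_{\mathrm{loc}}(M)$ anyway.
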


\begin{proof} 
(1) One has $f \in \Ker d' \restr _{C^{0, \ell}}$ if and only if $f: M^{\ell +1} \to \Omega _{\mathrm{loc}}^{p, 0}(M)$ 
satisfies $d \circ f =0$, 
\emph{i.e.}~ $f(m_0, \dots, m_\ell)$ is a constant function for a.a.\! $(m_0, \dots, m_\ell) \in M^{\ell +1}$. 
Moreover when $f(m_0, \dots, m_\ell)$ is a constant function, one has 
$$\bigl\Vert f(m_0, \dots, m_\ell) \bigr\Vert _{B(m_0,R)}^p = 
\bigl\vert f(m_0, \dots, m_\ell) \bigr\vert^p  \mathrm{vol}\bigl(B(m_0, R)\bigr).$$
Therefore $\Ker d' \restr _{C^{0, *}} \simeq AS^{p, *}(M)$.

(2) One has $f \in \Ker d'' \restr _{C^{k, 0}}$ if and only if $f: M \to \Omega _{\mathrm{loc}}^{p, k}(M)$ 
satisfies $\delta f =0$ \emph{i.e.}~
$f$ is a constant map. Let $\omega$ be its constant value.
Since $M$ is uniformly diffeomorphic to ${\bf R}^D$, it admits bounded geometry
in the sense of Subsection \ref{coho-group}; there exist functions $v, V: (0, +\infty) \to (0, +\infty)$
such that such for every ball 
$B(m, R) \subset M$ one has 
$$v(R) \leqslant \mathrm{vol}\bigl(B(m,R)\bigr) \leqslant V(R).$$ 
By Fubini one has for every $R>0$ 
$$v(R) \Vert \omega \Vert^p _{\Omega^{p,k}} \leqslant \int_M \Vert \omega \Vert^p _{B(m, R)} d\mathrm{vol}(m)
\leqslant V(R) \Vert \omega \Vert^p _{\Omega^{p,k}}.$$
Therefore $\Ker d'' \restr _{C^{*, 0}} \simeq \Omega^{p, *}(M)$.
\end{proof}

\subsection{Homotopy type of the columns}

\begin{proposition} \label{appendix-proposition2} For every $\ell \in \mathbf N$, the complex $(C^{*, \ell}, d')$ 
retracts by deformation onto the subcomplex $(\Ker d' \restr _{C^{0, \ell}} \to 0 \to 0 \to \dots)$.
\end{proposition}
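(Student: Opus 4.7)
The plan is to build a deformation retraction on $(C^{*,\ell},d')$ by applying, fibrewise over $M^{\ell+1}$, a Poincaré-type chain contraction on the de Rham complex of locally $L^p$ forms on $M$. Since $d'=(-1)^\ell d$ acts only in the ``form'' variable, the construction decouples from the discrete variables $(m_0,\dots,m_\ell)$; the only delicate point is to do it in a way that respects the semi-norms $N_R$. The two key ingredients are: a smoothed Poincaré homotopy on the unit ball $B\subset\mathbf{R}^D$ that is continuous for the $L^p_{\mathrm{loc}}$ topology, and the uniformly bounded chart system $\{\varphi_m\}_{m\in M}$ provided by Definition~\ref{semi-direct-definition}.

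First I would fix a smooth compactly supported $\chi\geqslant 0$ on $B$ with $\int_B\chi\,d\mathrm{vol}=1$, and construct operators $K_k\colon \Omega^{p,k}_{\mathrm{loc}}(B)\to\Omega^{p,k-1}_{\mathrm{loc}}(B)$ by a smoothed cone formula,
\[
(K_k\omega)(x)=\int_B\chi(z)\int_0^1 t^{k-1}(\iota_{x-z}\omega)(z+t(x-z))\,dt\,dz,
\]
together with the projection $P_0\omega:=\bigl(\int_B\chi\,\omega\,d\mathrm{vol}\bigr)\mathbf{1}_B$ onto constants. A direct computation yields $dK_1=\mathrm{id}-P_0$ in degree $0$ and $dK_{k+1}+K_k d=\mathrm{id}$ in degree $k\geqslant 1$. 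Because $\chi$ is smooth and compactly supported away from the boundary of $B$, the change of variables $y=z+t(x-z)$ has Jacobian bounded away from $0$ uniformly on the support of $\chi$, so each $K_k$ is continuous for the seminorms $\Vert\cdot\Vert_{B(0,r)}$, $r<1$, with bounds depending only on $r,p,k$.

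Next, for each $m\in M$, set $\widetilde K_{m,k}:=(\varphi_m^{-1})^*K_k\varphi_m^*$ and $\widetilde P_m:=(\varphi_m^{-1})^*P_0\varphi_m^*$, and define
\[
(H_kf)(m_0,\dots,m_\ell):=(-1)^\ell\,\widetilde K_{m_0,k}\bigl(f(m_0,\dots,m_\ell)\bigr),\qquad (rf)(m_0,\dots,m_\ell):=\widetilde P_{m_0}\bigl(f(m_0,\dots,m_\ell)\bigr),
\]
the latter only in degree $0$. The fibrewise relations give $d'H_{k+1}+H_kd'=\mathrm{id}$ for $k\geqslant 1$ and $d'H_1=\mathrm{id}-r$ on $C^{0,\ell}$; moreover $rf$ is valued in constant functions on $M$, hence lies in $\ker d'|_{C^{0,\ell}}$, and $r$ is the identity there. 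For continuity, fix $R>0$: the uniform-diffeomorphism hypothesis supplies $r_R\in(1/2,1)$ and $\lambda=\lambda(R)$, independent of $m$, making $\varphi_m|_{B(0,r_R)}$ a $\lambda$-bi-Lipschitz map with $B(m,R)\subset\varphi_m(B(0,r_R))\subset B(m,\rho(R))$. Combining this with the Step~1 bound yields a pointwise estimate
\[
\bigl\Vert\widetilde K_{m_0,k}f(m_0,\dots,m_\ell)\bigr\Vert_{B(m_0,R)}\leqslant C(R)\,\bigl\Vert f(m_0,\dots,m_\ell)\bigr\Vert_{B(m_0,\rho(R))},
\]
uniformly in $m_0$, and similarly for $\widetilde P_{m_0}$. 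Integrating over $\Delta_R^{(\ell)}$ and using bounded geometry to absorb the inflation from $R$ to $\rho(R)$, one obtains $N_R(H_kf)\leqslant C'(R)\,N_{\rho(R)}(f)$ and analogously for $r$; this is the required continuity.

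The main obstacle is Step~1: producing a Poincaré homotopy that is bounded on $L^p_{\mathrm{loc}}$-forms for \emph{every} $p\in(1,+\infty)$ and every degree. The naive radial cone $\omega(x)\mapsto\int_0^1 t^{k-1}(\iota_x\omega)(tx)\,dt$ fails this, because the substitution $y=tx$ introduces a singular factor $t^{-D/p}$. Averaging the cone apex against $\chi$ as above removes this singularity uniformly on compact subsets of $B$, which is why the smoothed formula, rather than the classical one, is needed. Once this $L^p_{\mathrm{loc}}$-continuous chain contraction is in hand, its transfer to $M$ through the uniform charts $\varphi_m$ and the Fubini-type integration over $M^{\ell+1}$ are routine.
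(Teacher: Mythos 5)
Your proposal is correct and follows essentially the same route as the paper: the averaged Poincar\'e cone operator on the unit ball (the Iwaniec--Lutoborski construction that the paper cites), transported fibrewise through the uniform charts $\varphi_{m_0}$, with the retraction onto $\Ker d'\restr_{C^{0,\ell}}$ given by averaging against the bump function. The only points to tidy are that in degree $0$ the homotopy identities should read $K_1\circ d=\mathrm{id}-P_0$ and $H_1\circ d'=\mathrm{id}-r$ (your compositions are written in the wrong order), and that one should record, as the paper does, that the family $\{\varphi_m\}$ can be chosen so that $(m,x)\mapsto\varphi_m(x)$ is measurable, which is needed for $H_kf$ to be measurable on $M^{\ell+1}$.
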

Its proof is postened at the end of the subsection. A crucial ingredient is the following lemma
issued from \cite[Section 4]{IL93}.

\begin{lemma}\label{appendix-lemma1}
Let $B = B(0,1)$ be the unit Euclidean open ball in ${\bf R}^D$. Let $h \in C^\infty (B)$ be non-negative,
supported on $B(0, \frac{1}{2})$, and normalized so that $\int _B h(x)dx =1$. There exists a continuous
operator $T: \Omega _{\mathrm{loc}}^{p,k}(B) \to \Omega _{\mathrm{loc}}^{p,k-1}(B)$ with the following homotopy
properties
\begin{enumerate}
\item $d \circ T + T \circ d = \mathrm{id}$ when $k \geqslant 1$,
\item $(T \circ d)(f) = f -\int _B f(x)h(x) dx$, for $f \in \Omega _{\mathrm{loc}}^{p,0}(B)$.
\item For every $r \in (\frac{1}{2}, 1)$, $k \geqslant 1$ and $\omega \in \Omega _{\mathrm{loc}}^{p, k}(B)$, one has
$$\Vert T \omega \Vert _{B(0,r)} \leqslant C  \Vert \omega \Vert _{B(0,r)},$$ where $C$ is a constant 
which depends only on the dimension $D$.
\end{enumerate}
\end{lemma}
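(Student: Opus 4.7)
The plan is to construct $T$ as an average of the standard Poincaré cone homotopy operators, weighted by $h$, following the approach of Iwaniec-Lutoborski. For each $y \in B$, let
$$(K_y\omega)_x(v_1, \dots, v_{k-1}) := \int_0^1 t^{k-1}\, \omega_{y + t(x-y)}(x-y, v_1, \dots, v_{k-1})\, dt$$
for $k \geq 1$, and set $K_y = 0$ on $0$-forms. A direct computation using Cartan's formula along the segment from $y$ to $x$ gives the classical identities $dK_y\omega + K_y d\omega = \omega$ on $k$-forms with $k \geq 1$, and $(K_y df)(x) = f(x) - f(y)$ for $f \in \Omega^0(B)$. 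I then define
$$T\omega := \int_B h(y)\, K_y\omega\, dy.$$
Items (1) and (2) follow by integrating these identities against $h$ and using $\int_B h = 1$: for $k \geq 1$, $dT\omega + Td\omega = \omega$; and on $f \in \Omega^0$, $(Tdf)(x) = f(x) - \int_B f(y)h(y)\,dy$.

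For property (3), two preliminary reductions simplify the task. First, by convexity of $B$, whenever $x \in B(0,r)$ with $r \in (1/2,1)$ and $y \in \mathrm{supp}(h) \subset B(0, 1/2)$, every point of the segment $y + t(x-y)$ ($t \in [0,1]$) lies in $B(0,r)$; so $T\omega|_{B(0,r)}$ depends only on $\omega|_{B(0,r)}$. Second, from the identity $dT\omega = \omega - Td\omega$ (for $k \geq 1$), an $L^p \to L^p$ bound on $T$ automatically upgrades to the $\Omega^{p,k} \to \Omega^{p,k-1}$ estimate stated in (3); so it suffices to establish $\|T\omega\|_{L^p(B(0,r))} \leq C\,\|\omega\|_{L^p(B(0,r))}$ with $C$ depending only on $D$.

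The main technical step is this $L^p$ bound. After substituting $y = x + u$ and then $s = 1 - t$, the operator takes the kernel form
$$(T\omega)_x(v) = -\int_0^1 (1-s)^{k-1}\!\int_{{\bf R}^D} h(x+u)\, \omega_{x+su}(u, v)\, du\, ds.$$
Two applications of Minkowski's integral inequality (first in $s$, then in $u$) yield
$$\|T\omega\|_{L^p(B(0,r))} \leq \int_0^1 (1-s)^{k-1} ds \int_{{\bf R}^D} |u|\, \bigl\|h(\cdot + u)\,|\omega|_{\cdot + su}\bigr\|_{L^p(B(0,r))}\, du.$$
The convexity observation above implies that on the effective integration domain (where $h(x+u) \neq 0$) the point $x + su$ belongs to $B(0,r)$; hence the change of variables $x' = x + su$ (Jacobian $1$), combined with $\|h\|_\infty < \infty$, gives
$$\bigl\|h(\cdot + u)\,|\omega|_{\cdot + su}\bigr\|_{L^p(B(0,r))} \leq \|h\|_\infty\, \|\omega\|_{L^p(B(0,r))}.$$
Since $h$ has compact support, the remaining $u$-integral is over a bounded set; and $\int_0^1 (1-s)^{k-1} ds = 1/k \leq 1$. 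The total constant therefore depends only on $D$ (and on the fixed function $h$), as required. The main obstacle in this plan is the kernel manipulation plus the Minkowski reduction; once the substitutions are set up correctly, the convexity of $B$ does the essential geometric work.
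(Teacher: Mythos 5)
Your construction is exactly the one the paper uses (following Iwaniec--Lutoborski): the cone operator $K_y$ averaged against the bump $h$, with items (1) and (2) obtained by integrating the pointwise Cartan homotopy identities $dK_y+K_yd=\mathrm{id}$ and $(K_yd)f=f-f(y)$ against $h(y)\,dy$. The paper simply cites \cite{IL93} (Inequality (4.15) and Lemma 4.2 there) for item (3), whereas you supply a correct self-contained proof of that estimate via the substitutions $y=x+u$, $s=1-t$ and Minkowski's integral inequality, using convexity of $B$ to keep the translated points $x+su$ inside $B(0,r)$ and to reduce the $\Omega^{p,k}$-bound to the $L^p$-bound through the identity $dT\omega=\omega-Td\omega$; this is a faithful (and slightly streamlined) version of the analysis the paper outsources, the only residual imprecision being the harmless dependence of the constant on the fixed function $h$ (shared with the paper's own statement) and the routine density argument needed to extend the smooth-form identities to all of $\Omega^{p,k}_{\mathrm{loc}}(B)$, which you, like the paper, leave implicit.
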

\begin{proof} This is precisely done in \cite[Section 4]{IL93}. We recall the construction for convenience. 
First, for every $y \in B$, one defines an operator $K_y: \Omega^k(B) \to \Omega^{k-1}(B)$ by H.\! Cartan's
formula 
$$(K_y \omega)(x; v_1, \dots, v_{k-1}) = \int _0^1 t^{k-1} \omega\bigl(y+t(x-y); x-y, v_1, \dots, v_{k-1}\bigr)~dt.$$
It satisfies $d \circ K_y + K_y \circ d = \mathrm{id}$ when $k\geqslant 1$, and $(K_y \circ d)(f) = f(y)$ when $k=0$.
Then one averages $K_y$ over all $y \in B$, to define the operator $T: \Omega^k(B) \to \Omega^{k-1}(B)$:
$$T\omega = \int _B (K_y\omega)h(y)dy.$$
Clearly it satisfies the homotopy relations (1) and (2) of the Lemma. A bit of analysis is required
to see that it extends to an operator from $\Omega _{\mathrm{loc}}^{p,k}(B)$ to $\Omega _{\mathrm{loc}}^{p,k-1}(B)$,
and to show that it satisfies the property (3) of Lemma \ref{appendix-lemma1} -- 
see Inequality (4.15) and Lemma 4.2 in \cite{IL93}.
\end{proof}

By assumption $M$ is uniformly diffeomorphic to ${\bf R}^D$. Thus, for every $m \in M$, there is a diffeomorphism 
$\varphi_m: B \to M$ with controlled geometry, see Definition \ref{semi-direct-definition}.

We remark that we can (and will) assume that the map 
$$ M \times B \to M, ~~(m, x) \mapsto \varphi _m(x)$$ 
is measurable.
Indeed one can always transform the family $\{\varphi _m\}_{m \in M}$ in such a way that the resulting map 
$M \to C^\infty (B, M), ~~ m \mapsto \varphi _m$
is piecewise constant.
\begin{lemma}\label{appendix-lemma1bis}
For $f: M^{\ell +1} \to \Omega^{p,k} _{\mathrm{loc}}(M)$ and $(m_0, \dots, m_\ell) \in M^{\ell +1}$, 
let 
$$(Hf)(m_0, \dots, m_\ell) = 
\bigl( (\varphi _{m_0}^{-1})^* \circ T \circ \varphi _{m_0}^* \bigr)\bigl(f(m_0, \dots, m_\ell)\bigr),$$
where $T$ is the homotopy operator in Lemma \ref{appendix-lemma1}.
This defines a continuous operator $H: C^{k,\ell} \to C^{k-1,\ell}$, which satisfies the following homotopy
relations 
\begin{itemize} 
\item $d' \circ H + H \circ d' = \mathrm{id}$ for $k \geqslant 1$,
\item $H \circ d' = \mathrm{id} - \psi$ for $k=0$, where 
$$(\psi f)(m_0, \dots, m_\ell) = \int _B f\bigl(m_0, \dots, m_\ell; \varphi _{m_0}(x)\bigr)h(x)dx.$$
\end{itemize}
\end{lemma}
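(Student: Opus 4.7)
The plan is to verify the three claims of the lemma: the algebraic homotopy identities, the well-definedness (measurability) of $H$, and the quantitative continuity estimate controlling the semi-norms $N_R$.

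\textbf{Step 1 (pointwise homotopies).} For fixed $(m_0, \dots, m_\ell) \in M^{\ell+1}$, I would apply Lemma~\ref{appendix-lemma1} to the form $\omega := \varphi_{m_0}^* (f(m_0, \dots, m_\ell)) \in \Omega_{\mathrm{loc}}^{p,k}(B)$. Since pullback commutes with the exterior differential, conjugating the identity $dT + Td = \mathrm{id}$ of Lemma~\ref{appendix-lemma1}(1) by $\varphi_{m_0}^*$ yields
\[
d \circ (\varphi_{m_0}^{-1})^* T \varphi_{m_0}^* + (\varphi_{m_0}^{-1})^* T \varphi_{m_0}^* \circ d = \mathrm{id}
\]
on $\Omega_{\mathrm{loc}}^{p,k}(M)$, which gives the first homotopy identity for $k \geqslant 1$ once one absorbs the sign $(-1)^\ell$ built into $d'$ (for instance by inserting the same sign in the definition of $H$). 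The analogous computation using Lemma~\ref{appendix-lemma1}(2), together with the observation that the pullback of a constant function by a diffeomorphism is again a constant function, will give the relation $H \circ d' = \mathrm{id} - \psi$ on $C^{0,\ell}$.

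\textbf{Step 2 (measurability).} As noted just before the lemma, I may assume the map $M \times B \to M$, $(m,x) \mapsto \varphi_m(x)$, is measurable and even piecewise constant in $m$. Combined with the measurability of $f$ and the continuity of $T$ and of the pullbacks on the Fr\'echet space $\Omega_{\mathrm{loc}}^{p,*}$, this yields measurability of the assignment $(Hf)\colon M^{\ell+1} \to \Omega_{\mathrm{loc}}^{p,k-1}(M)$.

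\textbf{Step 3 (continuity estimate).} To show that $H\colon C^{k,\ell} \to C^{k-1,\ell}$ is continuous, I fix $R > 0$ and choose $r \in (\tfrac{1}{2}, 1)$ as in Definition~\ref{semi-direct-definition} so that $B(m_0, R) \subset \varphi_{m_0}(B(0,r)) \subset B(m_0, \rho(R))$ and $\varphi_{m_0}\restr_{B(0,r)}$ is $\lambda(R)$-bilipschitz. The bilipschitz property gives a uniform comparison between the $L^p \Omega^{i}$-norms on $B(m_0,R)$ and on $B(0,r)$ (with constants depending on $\lambda(R)$ and on the Jacobian factor $\lambda(R)^D$). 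Combined with the bound $\Vert T\omega \Vert_{B(0,r)} \leqslant C \Vert \omega \Vert_{B(0,r)}$ from Lemma~\ref{appendix-lemma1}(3), this produces a pointwise estimate
\[
\bigl\Vert (Hf)(m_0, \dots, m_\ell) \bigr\Vert_{B(m_0, R)} \leqslant C'(R)\, \bigl\Vert f(m_0, \dots, m_\ell) \bigr\Vert_{B(m_0, \rho(R))}
\]
with $C'(R)$ depending only on $R$ and on the geometric functions $\lambda, \rho$. Raising to the $p$-th power and integrating over $\Delta_R^{(\ell)}$ will then yield $N_R(Hf) \leqslant C'(R)\, N_{\rho(R)}(f)$, proving the continuity of $H$.

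The main obstacle will be Step~3, which requires careful bookkeeping of the bilipschitz distortion constants when transferring $L^p$-norms of $k$-forms between the model ball $B \subset \mathbf{R}^D$ and the geodesic balls of $M$, and the verification that the constant $C$ appearing in Lemma~\ref{appendix-lemma1}(3) produces a bound which is genuinely uniform in the base point $m_0$.
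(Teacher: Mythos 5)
Your proposal follows essentially the same route as the paper's proof: measurability from the piecewise-constant choice of $m \mapsto \varphi_m$, the homotopy identities by conjugating Lemma \ref{appendix-lemma1}(1)--(2) by $\varphi_{m_0}^*$, and continuity via the bilipschitz distortion bounds of Definition \ref{semi-direct-definition} combined with Lemma \ref{appendix-lemma1}(3), yielding exactly the estimate $N_R(Hf) \leqslant C'(R)\,N_{\rho(R)}(f)$ that the paper obtains (with $C'(R) = C\lambda(R)^{2k+1+2p/D}$). Your remark about the sign $(-1)^\ell$ is a legitimate point the paper passes over silently (as written, conjugation gives $d'\circ H + H\circ d' = (-1)^\ell\,\mathrm{id}$), and your fix of inserting $(-1)^\ell$ into $H$ is the right one and harmless for the retraction argument.
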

\begin{proof}
(1) First, since $(m, x) \mapsto \varphi _m(x)$ is measurable on $M \times B$, the map $Hf$ is measurable on $M^{\ell +1}$.

(2) For $\omega \in \Omega^{p,k} _{\mathrm{loc}}(M)$, $m \in M$ and $R>0$, with the notations of 
Definition \ref{semi-direct-definition} and Lemma \ref{appendix-lemma1}, one has:
\begin{align*}
&\bigl\Vert \bigl( (\varphi _{m}^{-1})^* \circ T \circ \varphi _{m}^* \bigr) (\omega)\bigr\Vert _{B(m,R)} 
= \bigl\Vert (\varphi _{m}^{-1})^* \bigl( (T \circ \varphi _{m}^* )(\omega)\bigr) \bigr\Vert _{B(m,R)}\\ 
\leqslant~ & \bigl\Vert (\varphi _{m}^{-1})^* \bigl( (T \circ \varphi _{m}^* )(\omega)\bigr) \bigr\Vert _{\varphi _m (B(0,r))}
\leqslant \lambda(R)^{k+\frac{p}{D}} \bigl\Vert (T \circ \varphi _{m}^* )(\omega) \bigr\Vert _{B(0,r)} \\
\leqslant~ & C \lambda(R)^{k+\frac{p}{D}} \bigl\Vert \varphi _{m}^* (\omega) \bigr\Vert _{B(0,r)}
\leqslant C  \lambda(R)^{2k+1+\frac{2p}{D}} \Vert \omega \Vert _{\varphi _m (B(0,r))} \\
\leqslant~ & C  \lambda(R)^{2k+1+\frac{2p}{D}} \Vert \omega \Vert _{B(m, \rho(R))}.
\end{align*}
Therefore 
$N_R(Hf) \leqslant C  \lambda(R)^{2k+1+\frac{2p}{D}} N_{\rho(R)}(f)$, and thus $H$ maps $C^{k,\ell}$ to $C^{k-1,\ell}$ continuously.

(3) The homotopy relations follows easily from those in Lemma \ref{appendix-lemma1}.
\end{proof}

\begin{proof}[Proof of Proposition \ref{appendix-proposition2}]
We keep the notations of Lemma \ref{appendix-lemma1bis}. Recall that the subspace $\Ker d' \restr _{C^{0, \ell}}$ is described
in the proof on Proposition \ref{appendix-proposition1}. Define a retraction 
$r: C^{*,\ell} \to (\Ker d' \restr _{C^{0, \ell}} \to 0 \to 0 \to \dots)$ by letting $r = \psi$ on $C^{0, \ell}$, and $r = 0$ on $C^{k, \ell}$ 
when $k \geqslant 1$. Clearly it commutes with $d'$. Since $\int _B h(x)dx = 1$, one has $r \circ i = \mathrm{id}$. Finally $H$ is a
homotopy between $i \circ r$ and $\mathrm{id}$. 
\end{proof}


\subsection{Homotopy type of the rows}
\begin{proposition} \label{appendix-proposition3} For every $k \in \mathbf N$, the complex $(C^{k, *}, d'')$ 
retracts by deformation onto the subcomplex $(\Ker d'' \restr _{C^{k, 0}} \to 0 \to 0 \to \dots)$.
\end{proposition}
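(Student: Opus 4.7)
The strategy parallels that of Proposition \ref{appendix-proposition2}: we construct a continuous retraction $r$ in degree zero and a chain homotopy $K$ for $d''$ in positive degrees, so that on the row complex $(C^{k,*}, d'')$ we have $r \circ \iota = \mathrm{id}$ on the subcomplex $\Ker d''\restr_{C^{k,0}}$ (identified with $\Omega^{p,k}(M)$ via Proposition \ref{appendix-proposition1}) and $\iota \circ r$ is chain homotopic to $\mathrm{id}_{C^{k,*}}$.

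Using the bounded geometry of $M$, I would pick a smooth kernel $\chi : M \times M \to [0, +\infty)$ that is uniformly bounded, vanishes outside the set $\{(n, m) : d(n, m) \leq R_0\}$ for some fixed $R_0 > 0$, and is normalised so that $\int_M \chi(n, m)\, d\mathrm{vol}(m) = 1$ for every $n \in M$. Such a kernel exists because $M$ is uniformly diffeomorphic to $\mathbf{R}^D$. With this $\chi$ in hand, define the retraction
$$(rf)|_x := \int_M \chi(x, m)\, f(m)|_x\, d\mathrm{vol}(m), \qquad f \in C^{k, 0},$$
and, for $\ell \geq 1$, the would-be homotopy
$$(Kf)(n_0, \dots, n_{\ell - 1}) := \int_M \chi(n_0, m)\, f(m, n_0, \dots, n_{\ell - 1})\, d\mathrm{vol}(m).$$
Jensen's inequality (exploiting $\int_M \chi(n_0, m)\, d\mathrm{vol}(m) = 1$), Fubini's theorem, and the containment $B(n_0, R) \subset B(m, R + R_0)$ whenever $d(n_0, m) \leq R_0$, give bounds of the form $N_R(Kf)^p \leq \|\chi\|_\infty\, N_{R + R_0}(f)^p$ and $\|rf\|_{\Omega^{p,k}}^p \leq C\, N_{R_0}(f)^p$, so that $K$ and $r$ are continuous for the relevant topologies; one also checks that $rf$ has a well-defined de Rham differential in $L^p$ (by differentiating under the integral and using smoothness of $\chi$), so $rf \in \Omega^{p, k}(M)$. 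Finally, $r \circ \iota = \mathrm{id}$, since integrating a constant against a probability kernel returns that constant.

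The key step, and the main technical obstacle, is to upgrade $K$ to an honest chain homotopy $H$ satisfying $d'' H + H d'' = \mathrm{id} - \iota \circ r$. A direct computation from the definition of $\delta$ in (\ref{coho-differential}) yields
$$(d'' K + K d'')f = f - \iota r(f) + Ef,$$
with an extra error operator
$$(Ef)(m_0, m_1, \dots, m_\ell) = \int_M \bigl[\chi(m_1, m) - \chi(m_0, m)\bigr]\, f(m, m_1, \dots, m_\ell)\, d\mathrm{vol}(m)$$
coming from the fact that the kernel $\chi(n_0, \cdot)$ depends on the first coordinate, which is changed by the leading face map of $d''$. A globally supported $\chi$ would make $E$ vanish but would destroy the $N_R$-continuity of $K$ (since distances $d(m, n_0)$ are then unbounded as $n_0$ ranges over $M$), so the localization of $\chi$ is forced on us. My plan for handling $E$ is to exhibit it as a chain-homotopy boundary, namely to construct an auxiliary continuous operator $K' : C^{k, \ell} \to C^{k, \ell - 1}$ with $d'' K' + K' d'' = E$, and then take $H := K - K'$. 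Concretely $K'$ will be built by symmetrizing the insertion of $m$ over the positions $j = 0, 1, \dots, \ell - 1$ with appropriate signs and kernels $\chi(n_j, \cdot)$, so that the telescoping of the variations of $\chi$ produces precisely $E$; the verification is a bookkeeping of the standard Alexander--Spanier cone formula together with the Jensen/Fubini estimates already used for $K$.

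Once this last identity is established, Lemma \ref{appendix-lemma0} can be applied to the double complex $(C^{k, \ell}, d', d'')$ together with Proposition \ref{appendix-proposition2}, thereby reducing the total complex to the subcomplex $(\Omega^{p, *}(M), d)$ and, symmetrically via Proposition \ref{appendix-proposition3}, to $AS^{p, *}(M)$; these two routes give the homotopy equivalence promised by Theorem \ref{appendix-theorem}.
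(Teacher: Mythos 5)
Your plan correctly identifies the obstacle: if the averaging kernel depends on the first simplex variable $n_0$, it does not commute with $\delta$, and the cone formula acquires the error term $E$. But resolving that obstacle is precisely the content of the proposition, and you leave it as an assertion: the auxiliary operator $K'$ with $d''K'+K'd''=E$ is never constructed, and the one-line recipe (``symmetrizing the insertion of $m$ over the positions $j$ with kernels $\chi(n_j,\cdot)$'') is not obviously correct --- the cross terms of $\delta K'+K'\delta$ for such a sum do not visibly telescope to the specific operator $E$, which pairs the single difference $\chi(n_1,\cdot)-\chi(n_0,\cdot)$ with the single face $f(m,n_1,\dots,n_\ell)$, and each candidate summand must moreover be checked to be $N_R$-continuous. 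There is also a mismatch in degree $\ell=0$: with your $K$ one gets $Kd''f=f-\tilde r f$ where $(\tilde r f)(n_0)=\int_M\chi(n_0,m)f(m)\,d\mathrm{vol}(m)$ still depends on $n_0$ and hence does not lie in $\Ker d''\restr_{C^{k,0}}$, and it differs from $\iota r f$ (whose kernel is $\chi(x,m)$ with $x$ the point at which the form is evaluated); so the identity $(d''K+Kd'')f=f-\iota r(f)+Ef$ is not correct as stated and yet another correction is needed there.

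The paper's proof avoids all of this by a different choice of weight (Lemma \ref{lemma-chi} and Lemma \ref{appendix-lemma2}): $\chi_m$ is a function of the \emph{form} variable, supported in $B(m,1)$ and normalized so that $\int_M\chi_m(m')\,d\mathrm{vol}(m)=1$ for each fixed $m'$ --- a continuous partition of unity indexed by the integration variable $m$, rather than a probability kernel in $m$ attached to a simplex vertex. Since $d''=\delta$ acts only on the simplex variables, multiplication by $\chi_m$ commutes with $\delta$, and integrating the exact cone identity $\delta\circ k_m+k_m\circ\delta=\mathrm{id}$ against $\chi_m\,d\mathrm{vol}(m)$ yields $d''K+Kd''=\mathrm{id}$ (and $Kd''f=f-\int_M\chi_m\cdot f(m)\,d\mathrm{vol}(m)$ in degree $0$, which is manifestly constant in the simplex variable) with no error term, while the support condition on $\chi_m$ still confines the integration to $m\in B(m_0,R+1)$ and gives the $N_R$ estimates. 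To complete your route you would have to actually produce $K'$ together with its continuity estimates; the simpler repair is to move the localization from the simplex variables to the form variable as above.
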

Again we start with two lemmata.
\begin{lemma}\label{lemma-chi} There exists, for every $m \in M$, a function $\chi _m: M \to [0, +\infty)$ which enjoys the following properties
:
\begin{enumerate}
\item its support is contained in $B(m,1)$,
\item the maps $(m, m') \mapsto \chi_m (m')$ and $(m, m') \mapsto (d\chi_m)(m')$ are $C^\infty$ 
and bounded on $M \times M$,
\item $\int _M \chi_m ~d\mathrm{vol}(m) =1$.
\end{enumerate}
\end{lemma}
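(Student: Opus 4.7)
The plan is to produce the family $(\chi_m)_{m\in M}$ by normalizing a suitable smooth kernel on $M\times M$.

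First I would construct an auxiliary function $\rho \in C^\infty(M \times M)$ taking values in $[0,+\infty)$ such that: (a) $\rho(m, m') = 0$ whenever $d(m,m') \geqslant 1/2$; (b) $\rho(m, m') \geqslant c_0 > 0$ whenever $d(m, m') \leqslant 1/4$; and (c) the quantities $\rho$ and $(d\rho)(m,\cdot)$ are bounded uniformly on $M\times M$, where $d$ denotes the exterior differential in the second variable. To achieve this, I would use the bounded geometry of $M$ (recall that uniform diffeomorphism to $\mathbf R^D$ yields the lower and upper volume bounds $v(R)\leqslant \mathrm{vol}(B(m,R))\leqslant V(R)$). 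Pick a maximal $1/8$-separated subset $\{m_i\}_{i \in I} \subset M$; the collection $\{B(m_i, 1/4)\}$ covers $M$ and is uniformly locally finite (with uniform upper bound on multiplicity depending only on $v$ and $V$). On each $B(m_i, 1/4)$ use the chart $\varphi_{m_i}: B \to M$ (rescaled so that the image covers the needed ball) to transport a fixed smooth Euclidean bump, then glue the resulting local bumps via a smooth partition of unity subordinate to the cover.

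Next, I would set $c(m) := \int_M \rho(m, m') \, d\mathrm{vol}(m')$. The upper bound on $c$ follows from the uniform upper bound on $\rho$ and from the inclusion $\mathrm{supp}\, \rho(m,\cdot) \subset B(m,1/2)$ together with $\mathrm{vol}(B(m,1/2))\leqslant V(1/2)$. The lower bound on $c$ follows from (b) combined with $\mathrm{vol}(B(m,1/4))\geqslant v(1/4)$. Smoothness of $c$ is inherited from that of $\rho$ via differentiation under the integral, which is justified by the uniformly compact $m'$-support. I would then define
\[
\chi_m(m') := \rho(m, m')/c(m).
\]
Property~(1) follows from the $m'$-support of $\rho$ being contained in $B(m,1/2) \subset B(m,1)$. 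Property~(3), namely $\int_M \chi_m\, d\mathrm{vol} = 1$, is the defining normalization. Property~(2) follows from the uniform bounds on $\rho$, $(d\rho)(m,\cdot)$, and $c$, $1/c$, together with the quotient rule.

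The main obstacle is Step~1: producing a kernel $\rho$ that is smooth \emph{jointly} in $(m,m')$ with uniform bounds, despite the fact that the family of local charts $\{\varphi_m\}_{m\in M}$ is only assumed to be measurable in $m$ (indeed it is taken piecewise constant in the paper). Hence one cannot simply transport a fixed Euclidean bump through $\varphi_m^{-1}$ and obtain smoothness in $m$. The resolution is exactly the discrete locally finite cover trick above: one only uses charts at the fixed points $m_i$ to define the local building blocks, and the joint smoothness in $(m,m')$ is then produced by the smooth partition of unity on $M$ used to patch these blocks, rather than by a continuous family of charts.
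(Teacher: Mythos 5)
Your overall strategy---build a nonnegative kernel on $M\times M$, jointly smooth in $(m,m')$, supported near the diagonal, with uniform two-sided bounds, and then normalize---is exactly the paper's: the paper simply posits a $C^\infty$ function $\Phi\geqslant 0$ on $M\times M$ equal to $1$ on $\{d(m,m')\leqslant 1/2\}$ and to $0$ on $\{d(m,m')\geqslant 1\}$, and divides by $\int_M\Phi(m,m')\,d\mathrm{vol}(m)$. Your additional care in building the kernel from a uniformly locally finite cover and a partition of unity, rather than trying to transport a Euclidean bump through the family $\{\varphi_m\}$ (which is indeed only measurable, even piecewise constant, in $m$), supplies a construction the paper omits entirely; that part is sound, modulo the usual bounded-geometry bookkeeping.

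There is, however, one genuine slip: you normalize over the wrong variable. You set $c(m)=\int_M\rho(m,m')\,d\mathrm{vol}(m')$ and $\chi_m(m')=\rho(m,m')/c(m)$, which yields $\int_M\chi_m(m')\,d\mathrm{vol}(m')=1$ for each fixed $m$. Property (3) of the lemma, as the notation $d\mathrm{vol}(m)$ indicates and as the paper's own normalization $\chi_m(m')=\Phi(m,m')\big/\int_M\Phi(m,m')\,d\mathrm{vol}(m)$ makes unambiguous, requires $\int_M\chi_m(m')\,d\mathrm{vol}(m)=1$ for each fixed $m'$: the integration is over the subscript $m$, the argument $m'$ being held fixed. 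This is precisely what is used downstream in Lemma \ref{appendix-lemma2}, where one needs $\int_M\chi_m\cdot\omega\,d\mathrm{vol}(m)=\omega$ for a fixed form $\omega$ on $M$; with your normalization the homotopy identity $d''\circ K+K\circ d''=\mathrm{id}$ would fail. The repair is immediate: put $c(m'):=\int_M\rho(m,m')\,d\mathrm{vol}(m)$ and $\chi_m(m'):=\rho(m,m')/c(m')$. Your conditions (a)--(c) are stated symmetrically in $d(m,m')$, so the same two-sided bounds $c_0\,v(1/4)\leqslant c(m')\leqslant\Vert\rho\Vert_\infty\,V(1/2)$ hold, smoothness of $c$ again follows by differentiating under the integral (now over the uniformly compact $m$-support $B(m',1/2)$), and properties (1) and (2) are unaffected. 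With that correction the argument is complete and matches the paper's.
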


\begin{proof} Consider a $C^\infty$ non-negative function $\Phi$ on $M \times M$, which is equal to $1$
on $\{d(m,m') \leqslant 1/2\}$, and $0$ on $\{d(m,m') \geqslant 1\}$. Let
$$\chi_m (m'):= \frac{\Phi (m, m')}{\int _M \Phi (m, m') ~d\mathrm{vol}(m)}.$$ 
It satisfies the excepted properties because $M$ is uniformly diffeomorphic to 
${\bf R}^D$.
\end{proof} 

\begin{lemma}\label{appendix-lemma2}
For $f: M^{\ell +1} \to \Omega _{\mathrm{loc}}^{p,k}(M)$ and $(m_0, \dots, m_{\ell -1}) \in M^{\ell}$, let
$$(Kf)(m_0, \dots, m_{\ell -1}) = \int _M \chi_m \cdot f(m, m_0, \dots, m_{\ell -1})~ d\mathrm{vol}(m).$$
This defines a continuous operator $K: C^{k, \ell} \to C^{k, \ell-1}$, which satisfies the following homotopy relations:
\begin{itemize}
\item $d'' \circ K + K \circ d'' = \mathrm{id}$ when $\ell \geqslant 1$,
\item $(K \circ d'')(f) = f - \int _M \chi _m \cdot f(m)~ d\mathrm{vol}(m)$ when $\ell =0$.
\end{itemize}
\end{lemma}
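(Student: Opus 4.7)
The plan is to handle $K$ in three stages: establish measurability and continuity of $K$ as an operator $C^{k,\ell} \to C^{k,\ell-1}$, then derive both homotopy identities by a bookkeeping computation based on the definition of $\delta$ together with the normalization $\int_M \chi_m \, d\mathrm{vol}(m) = 1$ from Lemma \ref{lemma-chi}. Measurability of $Kf$ will follow from Fubini once one notes that $(m,m') \mapsto \chi_m(m')$ is jointly measurable and that, for fixed $(m_0,\dots,m_{\ell-1})$, the integrand $\chi_m \cdot f(m,m_0,\dots,m_{\ell-1})$ has compact support in $m$ over every relatively compact subset of $M$.

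First I would prove continuity. Fix $R>0$; since $\mathrm{supp}(\chi_m) \subset B(m,1)$, only $m \in B(m_0,R+1)$ contributes to the $\Omega^{p,k}(B(m_0,R))$-norm of $(Kf)(m_0,\dots,m_{\ell-1})$. The de Rham differential in the remaining $M$-variable commutes with integration over $m$ and produces a $d\chi_m \wedge f$ term; combined with the $L^\infty$ bounds on $\chi_m$ and $d\chi_m$ from Lemma \ref{lemma-chi}(2), this yields a pointwise estimate of the form
$$\bigl\Vert (Kf)(m_0,\dots,m_{\ell-1}) \bigr\Vert_{\Omega^{p,k}(B(m_0,R))} \leqslant A \int_{B(m_0,R+1)} \bigl\Vert f(m,m_0,\dots,m_{\ell-1}) \bigr\Vert_{\Omega^{p,k}(B(m_0,R))} d\mathrm{vol}(m).$$
H\"older's inequality, the volume bound $V(R+1)$, the inclusion $B(m_0,R) \subset B(m,2R+1)$ (valid when $d(m,m_0) \leqslant R+1$), and the fact that the enlarged tuple $(m,m_0,\dots,m_{\ell-1})$ lies in $\Delta_{2R+1}^{(\ell)}$, then give $N_R(Kf) \leqslant C(R)\, N_{2R+1}(f)$, so $K$ indeed lands in $C^{k,\ell-1}$ and is continuous.

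For the homotopy identity with $\ell \geqslant 1$, I would expand $(d''f)(m,m_0,\dots,m_{\ell-1}) = (\delta f)(m,m_0,\dots,m_{\ell-1})$ using (\ref{coho-differential}) and isolate the $i=0$ term (where $m$ is omitted), which equals $f(m_0,\dots,m_{\ell-1})$; the remaining terms add up to $-\sum_{i=0}^{\ell-1}(-1)^i f(m,m_0,\dots,\hat{m_i},\dots,m_{\ell-1})$. Integrating against $\chi_m\, d\mathrm{vol}(m)$, Lemma \ref{lemma-chi}(3) turns the first contribution into $f(m_0,\dots,m_{\ell-1})$ (which does not depend on $m$), while the rest becomes exactly $-(\delta Kf)(m_0,\dots,m_{\ell-1}) = -(d''Kf)(m_0,\dots,m_{\ell-1})$, giving $d''K + Kd'' = \mathrm{id}$. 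For $\ell = 0$ the sum of omission terms is empty, so the same computation directly yields $(Kd''f)(m_0) = f(m_0) - \int_M \chi_m f(m)\, d\mathrm{vol}(m)$.

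The main obstacle is the continuity estimate, where the trade-off between the radius $R$ of the $L^p$ ball and the radius $2R+1$ produced by recentering must be tracked carefully, and where the bound on $d\chi_m$ must be exploited to control the differential of $Kf$ so that both components of the $\Omega^{p,k}$ norm are handled simultaneously. Once this is in place, the homotopy relations reduce to a straightforward unwinding of the coboundary formula, the only analytical input being the normalization of $\chi_m$.
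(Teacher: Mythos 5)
Your proposal is correct and follows essentially the same route as the paper: the same continuity estimate (support of $\chi_m$ in $B(m,1)$, the bounds on $\chi_m$ and $d\chi_m$, H\"older, Fubini and the inclusions $B(m_0,R)\subset B(m,2R+1)$, $(m,m_0,\dots,m_{\ell-1})\in\Delta_{2R+1}^{(\ell)}$ yielding $N_R(Kf)\leqslant \phi(R)N_{2R+1}(f)$), and the same homotopy computation, which the paper merely packages via the evaluation operators $k_m f = f(m,\cdot)$ and the standard identity $\delta\circ k_m + k_m\circ\delta=\mathrm{id}$ before averaging against $\chi_m$. Your explicit mention of the $d\chi_m\wedge f$ term needed to control the full $\Omega^{p,k}$-norm is a point the paper leaves implicit in its appeal to Lemma \ref{lemma-chi}(2).
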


\begin{proof} We divide it into few steps.

(1) The map $K$ is a continuous linear from $C^{k, \ell}$ to $C^{k, \ell-1}$:

For $f \in C^{k, \ell}$, thanks to properties (1), (2) in Lemma \ref{lemma-chi}, one has:
\begin{align*}
&~\bigl\Vert (Kf)(m_0, \dots, m_{\ell -1}) \bigr\Vert _{B(m_0, R)} =\\
= &~\bigl\Vert \int _M \chi_m \cdot f(m, m_0, \dots, m_{\ell -1}) ~d\mathrm{vol}(m) 
\bigr\Vert _{B(m_0, R)} \\
\leqslant & 
\int _M \bigl\Vert \chi_m \cdot f(m, m_0, \dots, m_{\ell -1}) \bigr\Vert _{B(m_0, R)} ~d\mathrm{vol}(m)\\
\leqslant &~ C \cdot \int _{B(m_0, R+1)} 
\bigl\Vert f(m, m_0, \dots, m_{\ell -1}) \bigr\Vert _{B(m_0, R)} ~d\mathrm{vol}(m),
\end{align*}
where $C$ is a constant which depends only on the upper bounds in Lemma \ref{lemma-chi}(2). 
Since $M$ has bounded geometry the volume of the ball
$B(m_0, R+1)$ is bounded by above by a function of $R$ only. 
Moreover the relations $m \in B(m_0, R+1)$ and $(m_0, \dots, m_{\ell -1}) \in \Delta _R^{(\ell -1)}$ imply that 
$(m, m_0, \dots, m_{\ell -1}) \in \Delta _{2R +1}^{(\ell)}$ and that $B(m_0, R) \subset B(m, 2R+1)$. 
These properties, in combination with H\"older inequality and Fubini, yield the existence of
a function $\phi: (0, +\infty) \to (0, +\infty)$ such that 
$N_R(Kf) \leqslant \phi(R) \cdot N_{2R+1} (f)$.

\medskip

(2) For $m \in M$ and $f: M^{\ell +1} \to \Omega _{\mathrm{loc}}^{p,k}(M)$, put 
$$(k_m f)(m_0, \dots, m_{\ell -1}) = f(m, m_0, \dots, m_{\ell-1}),$$ so that
one can write $Kf = \int_M \chi_m \cdot (k_m f) ~d\mathrm{vol}(m)$.

It is an easy and standard fact that $\delta \circ k_m + k_m \circ \delta = \mathrm{id}$ when $\ell \geqslant 1$, and $(k_m \circ \delta)(f) = f - f(m)$
when $\ell =0$. Since $\int _M \chi_m ~d\mathrm{vol}(m) =1$ by Lemma \ref{lemma-chi}, and since $d''f = \delta f$, 
the operator $K$ satisfies the excepted homotopy relations.
\end{proof}

\begin{proof}[Proof of Proposition \ref{appendix-proposition3}] 
We keep the notations of Lemma \ref{appendix-lemma2}. Recall that the subspace $\Ker d'' \restr _{C^{k, 0}}$ is described
in the proof on Proposition \ref{appendix-proposition1}. Define a retraction 
$s: C^{*,\ell} \to (\Ker d'' \restr _{C^{k, 0}} \to 0 \to 0 \to \dots)$, by letting 
$s(f) = \int _M \chi _m \cdot f(m)~ d\mathrm{vol}(m)$ on $C^{k, 0}$, and $s = 0$ on $C^{k, \ell}$ 
when $k \geqslant 1$. Clearly it commutes with $d''$. Since $\int _M \chi_m ~d\mathrm{vol}(m) =1$, one has $s \circ i = \mathrm{id}$. 
Finally $K$ is a homotopy between $i \circ s$ and $\mathrm{id}$. 
\end{proof}

\subsection{Proof of Theorem \ref{appendix-theorem}}
Let $(D^*, d_D)$ be the complex $D^m = \oplus _{k + \ell = m} C^{k, \ell}$ with $d_D = d' + d''$.
From Propositions \ref{appendix-proposition1}, \ref{appendix-proposition2} and Lemma \ref{appendix-lemma0}, it retracts by deformation
onto $AS^{p, *}(M)$. From Propositions \ref{appendix-proposition1}, \ref{appendix-proposition3} and Lemma \ref{appendix-lemma0}, 
it retracts by deformation onto $\Omega^{p, *}(M)$. Therefore $AS^{p, *}(M)$ and $\Omega^{p, *}(M)$ are homotopy equivalent.
Their cohomologies are topologically isomorphic.
\hfill$\square$

\def\cprime{$'$}

\bigskip

\bigskip

\noindent Laboratoire Paul Painlev\'e, UMR 8524 CNRS / Universit\'e de Lille, 
Cit\'e Scientifique, B\^at. M2, 59655 Villeneuve d'Ascq, France. \\
E-mail: {\tt marc.bourdon@univ-lille.fr}.

\noindent Unit\'e de Math\'ematiques Pures et Appliqu\'ees, UMR 5669 CNRS / \'Ecole normale sup\'erieure de Lyon, 
46 all\'ee d'Italie, 69364 Lyon cedex 07,  France\\
E-mail: {\tt bertrand.remy@ens-lyon.fr}.

\end{document}